\numberwithin{equation}{section}
\newtheorem*{thm*}{Theorem}
\newtheorem{thm}{Theorem}[section]
\newtheorem{lemma}[thm]{Lemma}
\newtheorem{prop}[thm]{Proposition}
\newcommand{\R}{\mathbb{R}}
\newcommand{\C}{\mathbb{C}}
\newcommand{\N}{\mathbb{N}}
\newcommand{\X}{\mathcal{X}}
\newcommand{\HH}{\mathcal{H}}
\newcommand{\ZZ}{\mathcal{Z}}
\newcommand{\FF}{\mathcal{F}}
\newcommand{\NN}{\mathcal{N}}
\begin{document}
\nocite{*}
\title[Controllability of Coupled System of 2 KdV Equations]{Boundary Controllability of a nonlinear coupled system of two Korteweg--de Vries equations With Critical Size Restrictions on the spatial domain}
\author[Capistrano--Filho]{Roberto A. Capistrano--Filho}
\address{Department of Mathematics, Federal University of Pernambuco, UFPE, CEP 50740-545, Recife, PE, Brazil.}
\email{capistranofilho@dmat.ufpe.br}
\author[Gallego]{Fernando A. Gallego}
\address{Institute of Mathematics, Federal University of Rio de Janeiro, UFRJ, P.O. Box 68530, CEP 21945-970, Rio de Janeiro, RJ, Brazil.}
\email{fgallego@ufrj.br, ferangares@gmail.com}
\author[Pazoto]{Ademir F. Pazoto}
\address{Institute of Mathematics, Federal University of Rio de Janeiro, UFRJ, P.O. Box 68530, CEP 21945-970, Rio de Janeiro, RJ, Brazil.}
\email{ademir@im.ufrj.br}
\subjclass[2010]{Primary: 35Q53, Secondary: 37K10, 93B05, 93D15}
\keywords{ Gear--Grimshaw system, exact boundary controllability, Neumann boundary conditions, Dirichlet boundary conditions, critical set}
\date{}

\begin{abstract}
This article is dedicated to improve the controllability results obtained by Cerpa \textit{et al.} in \cite{cerpapazoto2011} and by Micu \textit{et al.} in \cite{micuortegapazoto2009}
for a nonlinear coupled system of two Korteweg–de Vries (KdV) equations posed on a bounded interval. Initially, in \cite{micuortegapazoto2009}, the authors proved that the nonlinear system is exactly controllable by using four boundary controls without any restriction on the length $L$ of the interval. Later on, in \cite{cerpapazoto2011}, two boundary controls were considered to prove that the same system is exactly controllable for small values of the length $L$ and large time of control $T$. Here, we use the ideas contained in \cite{capisgallegopazoto2015} to prove that, with another configuration of four controls, it is possible to prove the existence of the so-called \textit{critical length phenomenon} for the nonlinear system, i. e., whether the system is controllable depends on the length of the spatial domain. In addition, when we consider only one control input, the boundary controllability still holds for suitable values of the length $L$ and time of control $T$. In both cases, the control spaces are sharp due a technical lemma which reveals a hidden regularity for the solution of the adjoint system.
\end{abstract}

\maketitle

\section{Introduction}

\subsection{Setting of the problem} In \cite{geargrimshaw1984}, a complex system of equations was derived by Gear and Grimshaw to model the strong interaction of
two-dimensional, long, internal gravity waves propagating on neighboring pycnoclines in a stratified fluid. It has the structure of a pair of Korteweg-de Vries equations
coupled through both dispersive and nonlinear effects and has been the object of intensive research in recent years. An interesting possibility now presents itself
is the study of the boundary controllability properties when the model is posed on a bounded domain.

In this paper, we are mainly concerned with the study of the the Gear-Grimshaw system
%In \cite{geargrimshaw1984}, Gear and Grimshaw derived a complex system of equations to describe the strong interaction of weakly nonlinear, long waves.
%It has the structure of a pair of Korteweg-de Vries equations coupled through both dispersive and nonlinear effects.

%This paper is devoted to study a system derived by Gear and Grimshaw in \cite{geargrimshaw1984}
%as a model to describe the strong interaction of weakly nonlinear, long waves. It has the structure of a pair of Korteweg-de Vries equations coupled through
%both dispersive and nonlinear effects.
\begin{equation}
\label{gg1}
\begin{cases}
u_t + uu_x+u_{xxx} + a v_{xxx} + a_1vv_x+a_2 (uv)_x =0, & \text{in} \,\, (0,L)\times (0,T),\\
c v_t +rv_x +vv_x+abu_{xxx} +v_{xxx}+a_2buu_x+a_1b(uv)_x  =0,  & \text{in} \,\, (0,L)\times (0,T), \\
u(x,0)= u^0(x), \quad v(x,0)=  v^0(x), & \text{in} \,\, (0,L),
\end{cases}
\end{equation}
satisfying the following boundary conditions
\begin{equation}\label{gg2}
\begin{cases}
u(0,t)=h_0(t),\,\,u(L,t)=h_1(t),\,\,u_{x}(L,t)=h_2(t),\\
v(0,t)=g_0(t),\,\,v(L,t)=g_1(t),\,\,v_{x}(L,t)=g_2(t),
\end{cases}
\end{equation}
where $a_1, a_2, a, b, c, r\in \mathbb{R}$. We also assume that
$$1-a^2 b > 0 \quad \text{and} \quad  b, c > 0.$$
The functions $h_0, h_1, h_2, g_0, g_1$ and $g_2$ are the control inputs and $u_0, v_0$ the initial data.

The purpose is to see whether one can force the solutions of those systems to have
certain desired properties by choosing appropriate control inputs. Consideration will be given
to the following fundamental problem that arises in control theory:

\vglue 0.2 cm

\noindent\textbf{Exact Control Problem:} Given $T > 0$ and  $(u^0,v^0)$, $(u^1,v^1)$ in $(L^2(0,L))^2$, can one find appropriate $h_j$ and $g_j$, for $j=0,1,2$,
in a certain space such that the corresponding solution $(u,v)$ of \eqref{gg1}-\eqref{gg2} satisfies
\begin{equation}\label{exact}
u(x,T)=u^1(x) \quad \text{and} \quad v(x,T)=v^1(x)?
\end{equation}

\vglue 0.2 cm

If one can always find control inputs to guide the system from any given initial state $(u_0,v_0)$ to any given terminal state $(u_1,v_1)$, then the system
is said to be exactly controllable. However, being different from other systems, the length $L$ of the spatial domain may play a crucial role in determining the controllability of the system,
specially when some configurations of four controls input are allowed to be used. This phenomenon, the so-called \textit{critical length phenomenon}, was observed for the first time by Rosier \cite{rosier}
while studying the boundary controllability for the KdV equation. Throughout the paper we will provide a detailed explanation of such phenomenon but, roughly speaking,
Rosier proved the existence of a finite dimensional subspace $M$ of $L^2(0, L)$, which is not reachable by the KdV system, when starting from the origin, if $L$
belongs to a countable set of critical lengths.

\subsection{State of art}
As far as we know, the controllability results for system \eqref{gg1} was first obtained in \cite{micuortega2000}, when the model is posed on a periodic domain and $r = 0$. In this case, a diagonalization of the main terms allows to the decouple the corresponding linear system into two scalar KdV equations and use the previous results available in the literature. In what concerns a bounded interval $(0,L)$, later on, Micu \textit{et al.}, in  \cite{micuortegapazoto2009}, proved the following local exact boundary controllability property.
\vglue 0.2 cm
\noindent
{\bf Theorem A }(Micu \textit{et al.} \cite{micuortegapazoto2009}) {\em Let $L>0$ and $T >0$. Then, there exists a constant $\delta>0$, such that, for any initial and final data $(u^0,v^0), (u^1,v^1)\in (L^2(0, L))^2$ verifying $$||(u^0, v^0)||_{(L^2(0,L))^2}\leq\delta \quad\text{and}\quad ||(u^1, v^1)||_{(L^2(0,L))^2}\leq\delta,$$ there exist four control functions $h_1, g_1\in H^1_0(0, T)$ and $h_2, g_2 \in L^2(0,T)$, with $h_0=g_0=0$, such that the solution $$(u, v)\in C([0,T];(L^2(0,L))^2)\cap L^2(0,T;(H^1(0, L))^2)\cap H^1(0,T;(H^{-2}(0, L))^2)$$ of \eqref{gg1}-\eqref{gg2} verifies \eqref{exact}.}

\vglue 0.4 cm

The proof of Theorem A combines the analysis of the linearized system and the Banach’s fixed point theorem. It is important to point out that, in order to analyze the linearized system, the authors follows the classical duality approach \cite{dolecki,lions} and, therefore, the exact controllability property is equivalent to an observability inequality for the solutions of the adjoint system. The problem is then reduced to prove a nonstandard unique continuation property of the eigenfunctions of the corresponding differential operator.

An improvement of Theorem A was made by Cerpa \textit{et al.}, in \cite{cerpapazoto2011}. The authors considered the system \eqref{gg1}-\eqref{gg2} with only two control inputs acting on the Neumann boundary conditions, that is,
\begin{equation}\label{gg_CP}
\begin{cases}
u(0,t)=0,\,\,u(L,t)=0,\,\,u_{x}(L,t)=h_2(t),\\
v(0,t)=0,\,\,v(L,t)=0,\,\,v_{x}(L,t)=g_2(t).
\end{cases}
\end{equation}
In this case, the analysis of the linearized system is much more complicated, therefore the authors used a direct approach based on the multiplier technique that gives the observability inequality for small values of the length $L$ and large time of control $T$.
\vglue 0.2 cm
\noindent
{\bf Theorem B }(Cerpa \textit{et al.} \cite{cerpapazoto2011}) {\em Let us suppose that  $T,L>0$ satisfy $$1> \dfrac{\max\{b,c\}}{\min\left\lbrace b(1-\varepsilon^2), \left( 1 - \dfrac{a^2b}{\varepsilon^2} \right)\right\rbrace}\left\lbrace \frac{rL^2}{3c \pi^2}+ \frac{L^3}{3T\pi^2}\right\rbrace$$ where $$\varepsilon = \sqrt{ \dfrac{-(1-b)+ \sqrt{(1-b)^2+4a^2b^2}}{2b}}.$$ Then, there exists a constant $\delta>0$, such that, for any initial and final data $(u^0,v^0), (u^1, v^1)\in (L^2(0, L))^2$ verifying $$||(u^0, v^0)||_{(L^2(0,L))^2}\leq\delta \quad\text{and}\quad ||(u^1, v^1)||_{(L^2(0,L))^2}\leq\delta,$$ there exist two control functions $h_2, g_2 \in L^2(0,T)$, with  $h_0=g_0=h_1=g_1=0$, such that the solution $$(u, v)\in C([0,T];(L^2(0,L))^2)\cap L^2(0,T;(H^1(0, L))^2)\cap H^1(0,T;(H^{-2}(0, L))^2)$$ of \eqref{gg1}-\eqref{gg_CP} verifies \eqref{exact}.}
\vglue 0.4 cm
%Note that in both cases the problem related with the critical lengths were not addressed due the controllability holds for all $L>0$ (see Theorem A and B).
Although the analysis developed by the authors can be compared to the analysis developed by Rosier \cite{rosier} for the KdV equation, the problem related to the existence of critical lengths
addressed by Rosier was not studied, more precisely, the existence of the so-called \textit{critical length phenomenon}. Indeed, Rosier proved that the linear KdV equation is exactly controllable by means of a single boundary control except when $L$  lies in a countable set of critical lengths. This was done using the classical duality approach and the critical lengths found by rosier are such that there are eigenvalues of the linear problem for which the observability inequality leading to the controllability fails. More recently, the problem was investigated by Capistrano--Filho \textit{et al.}, in \cite{capisgallegopazoto2015}, considering a new set of boundary conditions, the Neumann boundary conditions
\begin{equation}\label{gg3}
\begin{cases}
u_{xx}(0,t)=h_0(t),\,\,u_x(L,t)=h_1(t),\,\,u_{xx}(L,t)=h_2(t),\\
v_{xx}(0,t)=g_0(t),\,\,v_x(L,t)=g_1(t),\,\,v_{xx}(L,t)=g_2(t),
\end{cases}
\end{equation}
getting the following result:
%The nonlinear system is shown to be locally exactly controllable \textit{via} the contraction mapping principle, if the associated linearized systems are exactly controllable.
\vglue 0.2 cm
\noindent
{\bf Theorem C }(Capistrano--Filho \textit{et al.} \cite{capisgallegopazoto2015}) {\em Let $T>0$ and define the set
\begin{equation}
\FF_r:= \left\lbrace 2\pi k \sqrt{\frac{1-a^2b}{r}}: k \in \N^{*}\right\rbrace\cup \left\lbrace \pi \sqrt{\frac{(1-a^2b)\alpha(k,l,m,n,s)}{3r}}:k, l, m, n, s \in \N^{*} \right\rbrace,
\label{critical_f}
\end{equation}
where
\begin{align*}
\alpha:=\alpha(k,l,m,n,s)=&5k^2+8l^2+9m^2+8n^2+5s^2+8kl+6km\\
+&4kn+2ks+12ml+8ln+3ls+12mn+6ms+8ns.
\end{align*}
Consider the following positions of the control inputs and the boundary conditions \eqref{gg3}:
\begin{itemize}
\item[] $\vec{h}_1=(0,h_1,0),\,\, \vec{g}_1=(g_0,g_1,g_2)$\,\,  and \,\, $\vec{h}_2=(h_0,h_1,h_2),\,\, \vec{g}_2=(0,g_1,0)$,
\item[] $\vec{h}_3=(h_0,h_1,0),\,\,\vec{g}_3=(g_0,g_1,0)$\,\, and \,\, $\vec{h}_4=(0,h_1,h_2),\,\,\vec{g}_4=(0,g_1,g_2)$,
\item[] $\vec{h}_5=(0,h_1,0),\,\,\vec{g}_5=(0,0,0)$\,\,\,\,\,\,\,\,\,\, and \,\, $\vec{h}_6=(0,0,0),\,\,\vec{g}_6=(0,g_1,0)$.
\end{itemize}
Then, there exists $\delta>0$, such that,  for  any $(u^0,v^0), (u^1,v^1) \in (L^2(0,L))^2$ verifying
$$\|(u^0,v^0)\|_{\X} + \|(u^1,v^1)\|_{\X} \leq \delta,$$
the following assertions are found
\begin{enumerate}
\item[(i)] If $L \in (0,\infty) \setminus \FF_r$, one can find  $\vec{h}_i, \vec{g}_i \in H^{-\frac{1}{3}}(0,T)\times L^2(0,T)\times H^{-\frac{1}{3}}(0,T)$, for $i=1,2$, such that the system \eqref{gg1} with boundary conditions \eqref{gg3} admits a unique solution $(u,v) \in C([0,T];(L^2(0,L))^2)\cap L^2(0,T,(H^1(0,L))^2)$ satisfying \eqref{exact}.
\item[(ii)] For any $L>0$, one can find  $\vec{h}_i, \vec{g}_j \in H^{-\frac{1}{3}}(0,T)\times L^2(0,T)\times H^{-\frac{1}{3}}(0,T)$, for $j=3,4$, such that the system \eqref{gg1} with boundary conditions \eqref{gg3} admits a unique solution $(u,v) \in C([0,T];(L^2(0,L))^2)\cap L^2(0,T,(H^1(0,L))^2)$, satisfying \eqref{exact}.
\item[(iii)]  Let $T>0$ and $L>0$ satisfying
\begin{align*}
1>\frac{\beta C_T}{T}\left[L +\frac{r}{c} \right],
\end{align*}
where $C_T$ is the constant in \eqref{hr4} and $\beta$ is the constant given by the embedding $H^{\frac{1}{3}}(0,T) \subset L ^2(0,T)$. Then, one can find  $\vec{h}_k, \vec{g}_k \in H^{-\frac{1}{3}}(0,T)\times L^2(0,T)\times H^{-\frac{1}{3}}(0,T)$, for $k=5,6$, such that the system \eqref{gg1} with boundary conditions \eqref{gg3} admits a unique solution $(u,v) \in C([0,T];(L^2(0,L))^2)\cap L^2(0,T,(H^1(0,L))^2)$, satisfying \eqref{exact}.
\end{enumerate}
}
\vglue 0.2 cm

Note that Theorem C shows that only one control mechanism is needed to prove the controllability instead of two as in Theorem B. Moreover, the previous theorem reveals that the system \eqref{gg1} is sensitive to changes of boundary conditions, like the KdV equation (see for instance \cite{caicedo_caspistrano_zhang_2015,rosier} and references therein for more details). More precisely, in Theorem C the authors showed that the exact controllability property is derived for any $L>0$ with control functions $h_0, g_0\in H^{-\frac{1}{3}}(0,T)$ and $h_1, g_1\in L^2(0,T)$. However, if we change the position of the controls and consider $h_0(t)=h_2(t)=0$ (resp. $g_0(t)=g_2(t)=0)$ the result with control functions $g_0, g_2\in H^{-\frac{1}{3}}(0,T)$ and $h_1, g_1\in L^2(0,T)$ is obtained if and only if the length $L$ of the spatial domain $(0,L)$ does not belong to a countable set \eqref{critical_f}. In other words, for Neumann boundary condition as in \eqref{gg3}, the \textit{critical length phenomenon} appears. Here, the result was obtained arguing as in \cite{caicedo_caspistrano_zhang_2015,rosier}, i. e., combining the classical duality approach \cite{dolecki,lions} and a fixed point argument. On the other hand, if only one control act on the boundary condition, $h_0(t)=g_0(t)=h_2(t)=g_2(t)=0$ and $g_1(t)=0$ (resp. $h_1(t)=0$), the linearized system is proved to be exactly controllable for small values of the length $L$ and time of control $T$.  In this case, due to some technical difficulties that will become clear during the proof, the observability inequality is proved using multipliers.

%In all cases the regularity of the controls are sharp in time.

\vglue 0.2 cm

Having all these results in hand, a natural question to be asked here is the following one.
\vglue 0.2 cm
\noindent\textbf{Critical Length Phenomenon:} Is there the \textit{critical length phenomenon} to the system \eqref{gg1}-\eqref{gg2}?

\subsection{Main result and notations}
We will consider the system \eqref{gg1} with the following four controls
\begin{equation}\label{gg3_new}
\left\lbrace\begin{tabular}{l l l l}
$u(0,t) = 0$,     & $u(L,t) = 0$,      & $u_{x}(L,t) = h_2(t)$ & in $(0,T)$,\\
$v(0,t) =g_0(t)$, & $v(L,t) = g_1(t)$, & $v_{x}(L,t) = g_2(t)$ & in $(0,T)$.
\end{tabular}\right.
\end{equation}
As conjectured by Capistrano--Filho \textit{et al.} in \cite{capisgallegopazoto2015}, indeed we can prove that  system \eqref{gg1}-\eqref{gg3_new} is controllable if and only if the length $L$ of the spatial domain $(0,L)$ does not belong to a new countable set, i. e.,
\begin{equation}\label{critical_diric}
L \in\!\!\!\!\!/\,\, \FF'_r:= \left\lbrace \pi \sqrt{\frac{(1-a^2b)\alpha(k,l,m,n,s)}{3r}}:k, l, m, n, s \in \N \right\rbrace,
\end{equation}
where
\begin{align*}
\alpha:=\alpha(k,l,m,n,s)=&5k^2+8l^2+9m^2+8n^2+5s^2+8kl+6km\\
+&4kn+2ks+12ml+8ln+3ls+12mn+6ms+8ns.
\end{align*}
Furthermore, it is possible to get the controllability of the system by using only one control
\begin{equation*}
\left\lbrace\begin{tabular}{l l l l}
$u(0,t) = 0$,     & $u(L,t) = 0$,      & $u_{x}(L,t) = h_2(t)$ & in $(0,T)$,\\
$v(0,t) = 0$,     & $v(L,t) = 0$,      & $v_{x}(L,t) = 0$      & in $(0,T)$,
\end{tabular}\right.
\end{equation*}
under the condition
\begin{align}\label{critical_diric-1}
L< \frac{\min\{b,c\}}{  \max\{b,c\} \beta C_T}T,
\end{align}
where $C_T$ is the constant in \eqref{hr4} and $\beta$ is the constant given by the embedding $H^{\frac{1}{3}}(0,T) \subset L ^2(0,T)$.

The analysis describe above is summarized in the main result of the paper, Theorem \ref{main}. In order to make the reading of the proof easier,
throughout the paper we use the following notation for the boundary functions:
\begin{enumerate}
\item[] $\vec{h}_1=(0,0,h_2)$, \ \ $\vec{g}_1=(g_0,g_1,g_2)$ \ \ and \ \ $\vec{h}_2=(0,0,h_2)$, \ \ $\vec{g}_2=(0,0,0)$.
\end{enumerate}
We also introduce the spaces of the boundary functions as follows
$$\HH_T:=H^{\frac{1}{3}}(0,T)\times H^{\frac{1}{3}}(0,T) \times L^2(0,T) \mbox{ and } \ZZ_T:= C([0,T];(L^2(0,L))^2)\cap L^2(0,T,(H^1(0,L))^2)$$
endowed with their natural inner products.  Finally, we consider the space $\mathcal{X}:=(L^2(0,L))^2$ endowed with the inner product
\begin{equation*}
\left\langle (u,v) , (\varphi
,\psi)\right\rangle := \frac{b}{c}\int_0^L u(x)\varphi(x) dx + \int_0^L v(x)\psi(x) dx,\qquad \forall (u,v), (\varphi,\psi) \in \X.
\end{equation*}
With the notation above, we can answer the question mentioned in previous subsection as follows:
\begin{thm}\label{main}
Let $T>0$. Then, there exists $\delta>0$ such that  for  $(u^0,v^0)$, $ (u^1,v^1)$ in $\X$ verifying
$$\|(u^0,v^0)\|_{\X} + \|(u^1,v^1)\|_{\X} \leq \delta,$$
the following hold:
\begin{enumerate}
\item[(i)]  Let $L \in (0,+\infty) \setminus \FF'_r $. Then, one can find  $\vec{h}_1, \vec{g}_1 \in \HH_T$, such that the system \eqref{gg1}-\eqref{gg2} admits a unique solution $(u,v) \in \ZZ_T$ satisfying
\eqref{exact}.
\item[(ii)] Let $L > 0$ satisfying \eqref{critical_diric-1}.
Then, one can find $\vec{h}_2, \vec{g}_2 \in \HH_T$, such that the system \eqref{gg1}-\eqref{gg2} admits a unique solution $(u,v) \in \ZZ_T$, satisfying \eqref{exact}.
\end{enumerate}
 \end{thm}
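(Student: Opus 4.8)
The plan is to follow the classical route for boundary controllability of KdV-type equations: first reduce, by the Hilbert Uniqueness Method, the exact controllability of the linearization of \eqref{gg1} around the origin to an observability inequality for the corresponding adjoint system; then establish that inequality --- by a compactness--uniqueness and spectral argument for part (i), and by the multiplier method for part (ii); and finally recover the full nonlinear statement by a contraction mapping argument in $\ZZ_T$. The sharpness of the control space $\HH_T$ will come from a hidden-regularity lemma for the linear problem, which I would prove first and which also provides well-posedness of the linear Cauchy problem with an $L^1(0,T;\X)$ source term --- the form needed for the fixed point. Concretely, this lemma asserts that, for $\X$-initial data and boundary inputs in $\HH_T$, the linear solution lies in $\ZZ_T$ depending continuously on the data, that the Kato smoothing effect holds, and that the boundary traces of the adjoint solution entering the observability inequality belong to $L^2(0,T)$ with continuous dependence on the $\X$-norm of the adjoint datum.

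\textbf{Part (i): the critical set $\FF'_r$.} For the control configuration $\vec{h}_1=(0,0,h_2)$, $\vec{g}_1=(g_0,g_1,g_2)$, duality turns exact controllability of the linear system into an observability inequality bounding the $\X$-norm of the adjoint datum by the $L^2(0,T)$-norms of the adjoint boundary traces dual to $h_2,g_0,g_1,g_2$. I would prove this inequality by the usual compactness--uniqueness argument: were it to fail, a weak-limit extraction would produce a nonzero solution of the adjoint system all of whose observed traces vanish; a unique-continuation analysis then forces such a solution to be an eigenfunction of the stationary adjoint operator satisfying the extra homogeneous boundary conditions. Inserting an exponential ansatz reduces this eigenvalue problem to a constant-coefficient third-order system whose characteristic equation, after the reduction permitted by $1-a^2b>0$ and $b,c>0$, is a cubic; imposing the boundary conditions then yields an algebraic compatibility relation among the three characteristic roots. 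Parametrizing these roots by integers, exactly as in \cite{rosier} and \cite{capisgallegopazoto2015}, shows that a nontrivial eigenfunction of this type exists \emph{if and only if} $L\in\FF'_r$, the quadratic form $\alpha(k,l,m,n,s)$ emerging from the explicit solution of that relation. Hence, for $L\in(0,+\infty)\setminus\FF'_r$ the observability inequality holds and the linearized system is exactly controllable with controls in $\HH_T$.

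\textbf{Part (ii): the small-length regime.} With the single control $\vec{h}_2=(0,0,h_2)$, $\vec{g}_2=(0,0,0)$ only one scalar boundary trace of the adjoint solution is observed, and the compactness--uniqueness scheme breaks down; here I would instead derive the observability inequality directly by the multiplier method, in the spirit of \cite{cerpapazoto2011} and of item (iii) of Theorem C. Testing the two adjoint equations against multipliers of the form $x\varphi$ and $x\psi$, together with suitable time weights, integrating over $(0,L)\times(0,T)$, and working in the weighted inner product of $\X$ --- whose weight $b/c$ is chosen precisely so that the interior contributions of the dispersive coupling cancel --- produces, after absorbing the lower-order terms coming from the transport term $rv_x$, the observability inequality with an explicit constant involving $C_T$ (the constant in \eqref{hr4}) and the embedding constant $\beta$ of $H^{1/3}(0,T)\subset L^2(0,T)$. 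This estimate is nontrivial precisely under the smallness condition \eqref{critical_diric-1}, which then yields exact controllability of the linearized system with one control in $\HH_T$.

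\textbf{Nonlinear step and the main difficulty.} From the linear controllability, HUM furnishes a bounded linear control operator, and I would close the proof by a fixed point in a small ball of $\ZZ_T$: to $(\tilde u,\tilde v)$ in the ball associate $\Gamma(\tilde u,\tilde v)$, the trajectory of the linear system with source terms $-\tilde u\tilde u_x-a_1\tilde v\tilde v_x-a_2(\tilde u\tilde v)_x$ and the analogous expression in the second equation, driven by controls steering $(u^0,v^0)$ to $(u^1,v^1)$; bilinear estimates for these quadratic terms in $\ZZ_T$ --- which exploit the $L^2(0,T;(H^1(0,L))^2)$ component of $\ZZ_T$ and the hidden-regularity lemma --- show that $\Gamma$ leaves a small ball invariant and is a contraction once $\delta$ is small, and its unique fixed point is the desired solution in $\ZZ_T$ satisfying \eqref{exact}. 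I expect the main obstacle to be the spectral analysis of part (i): solving the $2\times 2$ coupled eigenvalue problem and extracting from the boundary conditions the exact description of the critical set --- the five integer parameters and the particular quadratic form $\alpha$ --- is substantially more delicate than in the scalar KdV situation of \cite{rosier} and constitutes the technical heart of the argument.
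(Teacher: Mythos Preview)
Your overall architecture coincides with the paper's: sharp hidden regularity for the linear problem, observability for the adjoint by compactness--uniqueness in case (i) and by a direct multiplier estimate in case (ii), and a contraction in $\ZZ_T$ for the nonlinear result.

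There is, however, a genuine error in your sketch of the spectral step in part (i). The eigenvalue problem left over after compactness--uniqueness is the coupled pair of third-order ODEs for $(\varphi,\psi)$ with the ten boundary conditions $\varphi=\psi=\varphi'=\psi'=0$ and $a\varphi''+\tfrac{1}{c}\psi''=0$ at $x=0,L$. Its scalar characteristic polynomial is a \emph{sextic}, not a cubic,
\[
P(\xi)=(1-a^2b)\,\xi^6-r\,\xi^4-(c+1)p\,\xi^3+rp\,\xi+cp^2,
\]
with six roots $\xi_0,\dots,\xi_5$; the decoupling change of variables you may have in mind (used in the paper only for well-posedness) is unavailable here because the extra constraints $a\varphi''+\tfrac{1}{c}\psi''=0$ genuinely mix the two components. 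The paper reaches $P$ by Fourier-transforming the ODE system and invoking Paley--Wiener: a nontrivial $(\varphi,\psi)$ forces all six roots of $P$ to be simple roots of a single function $\alpha+\beta e^{-iL\xi}$, hence to differ pairwise by integer multiples of $2\pi/L$. It is these six roots, subject to the relation $\sum_j\xi_j=0$ coming from the vanishing $\xi^5$-coefficient, that produce the five integers $k,l,m,n,s$ and the quadratic form $\alpha(k,l,m,n,s)$ in \eqref{critical_diric}. With only three roots your scheme cannot recover $\FF'_r$.

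A smaller point on part (ii): the paper does not use an $x$-weight. The multipliers are purely temporal, $(T-t)\varphi$ and $\tfrac{b}{c}(T-t)\psi$, yielding $\|(\varphi^0,\psi^0)\|_{\X}^2\le \tfrac{C}{T}\|(\varphi,\psi)\|_{L^2(0,T;\X)}^2$ plus the observed boundary term; the interior $L^2$-norm is then absorbed through $\|\cdot\|_{L^2(0,L)}^2\le L\|\cdot\|_{L^\infty(0,L)}^2$ combined with the sharp trace bound \eqref{hr4}, and it is exactly this step that produces the smallness condition \eqref{critical_diric-1}. An $x$-multiplier would instead generate second-order boundary traces that are not the observed quantity for this control configuration, so that route does not close as stated.
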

Theorem \ref{main} will be proved using the same approach that Capistrano--Filho \textit{et al.} used to establish Theorem C. In order to deal with the linearized system, we also use
the classical duality approach \cite{dolecki,lions} which reduces the problem to prove an observability inequality for the solutions of the corresponding adjoint system associated to (\ref{gg1})-(\ref{gg2}):
\begin{equation}\label{int_linadj}
\begin{cases}
\varphi_t + \varphi_{xxx} + \frac{ab}{c}\psi_{xxx}=0,& \text{in}\,\, (0,L)\times (0,T), \\
\psi_t  +\frac{r}{c}\psi_x+a\varphi_{xxx} +\frac{1}{c}\psi_{xxx} =0,  & \text{in}\,\, (0,L)\times (0,T), \\
\varphi(0,t)=\varphi(L,t)=\varphi_x(0,t)=0, & \text{in}\,\, (0,T),\\
\psi(0,t)=\psi(L,t)=\psi_x(0,t)=0, & \text{in}\,\, (0,T), \\
\varphi(x,T)= \varphi^1(x), \ \ \psi(x,T)= \psi^1(x),  & \text{in}\,\, (0,L).
\end{cases}
\end{equation}
Similarly, as in \cite{capisgallegopazoto2015}, one will encounter some difficulties that demand special attention. To prove assertion $(i)$ we need to prove a hidden regularity for the solutions of the system of the linear system \eqref{int_linadj}. In our case, the result is given by the following lemma.
\begin{lemma}[Kato sharp trace regularities]\label{sharp_int}
For any $(\varphi^0, \psi^0) \in \X$, the system \eqref{int_linadj} admits a unique solution $(\varphi, \psi) \in \ZZ_T$, such that it possess the following sharp trace properties
\begin{equation}\label{hr4_int}
\underset{0\leq x \leq L}{\sup} \|  (\partial^k_x \varphi(x,\cdot),\partial^k_x \psi(x,\cdot))\|_{(H^{\frac{1-k}3}(0,T))^2}\le C_T\|(\varphi^0,\psi^0)\|_{(L^2(0,L))^2}, \quad \text{for $k=0,1,2.$}
\end{equation}
\end{lemma}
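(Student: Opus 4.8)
\emph{Plan.} The approach is to decouple the dispersive (principal) part of \eqref{int_linadj}, reduce each scalar component to the standard Korteweg--de Vries initial--boundary value problem — for which the sharp Kato trace estimates are available — and then recover the coupled statement by absorbing the remaining first--order terms as a forcing; the basic $\ZZ_T$ well--posedness is obtained separately by an energy/multiplier argument.

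\emph{Diagonalization and reduction to scalar KdV.} Writing $U=(\varphi,\psi)$ as a column vector, \eqref{int_linadj} reads $U_t+MU_{xxx}+NU_x=0$ with $M=\begin{pmatrix}1 & ab/c\\ a & 1/c\end{pmatrix}$ and $N=\begin{pmatrix}0&0\\0&r/c\end{pmatrix}$. Since $\det M=(1-a^2b)/c>0$, $\operatorname{tr}M=1+1/c>0$, and the discriminant of the characteristic polynomial of $M$ equals $(1-1/c)^2+4a^2b/c>0$, the matrix $M$ has two distinct \emph{positive} eigenvalues $\lambda_1,\lambda_2$ and diagonalizes over $\R$ as $M=P\,\operatorname{diag}(\lambda_1,\lambda_2)\,P^{-1}$. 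Setting $W=P^{-1}U=(w_1,w_2)$ one obtains, for $j=1,2$,
\begin{equation*}
\partial_tw_j+\lambda_j\partial_x^3w_j+b_{j1}\partial_xw_1+b_{j2}\partial_xw_2=0,
\end{equation*}
with constants $b_{j\ell}$ coming from $P^{-1}NP$; since $\varphi$ and $\psi$ share the homogeneous conditions (zero at $x=0$ and $x=L$, zero $x$--derivative at $x=0$), each $w_j$ inherits $w_j(0,\cdot)=w_j(L,\cdot)=\partial_xw_j(0,\cdot)=0$, and $W(\cdot,T)=P^{-1}(\varphi^1,\psi^1)$. The reversal $\widetilde w_j(y,s):=w_j(L-y,T-s)$ then turns the $j$--th equation into the forward problem $\partial_s\widetilde w_j+\lambda_j\partial_y^3\widetilde w_j=\widetilde f_j$ on $(0,L)\times(0,T)$ with $\widetilde w_j(0,s)=\widetilde w_j(L,s)=\partial_y\widetilde w_j(L,s)=0$, initial datum $(P^{-1}(\varphi^1,\psi^1))_j\in L^2(0,L)$, and right--hand side $\widetilde f_j$ a fixed first--order combination of $\partial_y\widetilde w_1,\partial_y\widetilde w_2$. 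As $\lambda_j>0$, this is exactly a Colin--Ghidaglia type KdV initial--boundary value problem.

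\emph{Scalar input and closing.} For that scalar problem, well--posedness in $C([0,T];L^2(0,L))\cap L^2(0,T;H^1(0,L))$ together with the sharp trace bounds
\begin{equation*}
\sup_{0\le y\le L}\bigl\|\partial_y^k w(y,\cdot)\bigr\|_{H^{\frac{1-k}{3}}(0,T)}\le C_T\bigl(\|w_0\|_{L^2(0,L)}+\|f\|_{L^1(0,T;L^2(0,L))}\bigr),\qquad k=0,1,2,
\end{equation*}
are known (the case $k=1$ is the classical boundary Kato smoothing coming from the multiplier $y\,w$, made uniform in $y$; the endpoint cases $k=0$ and $k=2$, with the genuine $H^{1/3}$ resp. $H^{-1/3}$ gain in time, follow from the explicit Fourier representation of the Airy propagator and oscillatory--integral estimates), see \cite{caicedo_caspistrano_zhang_2015} and their use in \cite{capisgallegopazoto2015}. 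Independently, multiplying the two equations in \eqref{int_linadj} by the combinations dictated by the inner product of $\X$, integrating by parts (all boundary contributions at $x=0$ vanish and those at $x=L$ make $\|(\varphi,\psi)(t)\|_{\X}$ non--increasing in the backward direction), together with the Kato multiplier $x\,U$, yields $\|(\varphi,\psi)\|_{\ZZ_T}\le C_T\|(\varphi^1,\psi^1)\|_{\X}$ with no reference to the traces. Hence $\widetilde f_j\in L^2(0,T;L^2(0,L))$ with $\|\widetilde f_j\|_{L^1(0,T;L^2)}\le C\,T^{1/2}\|W\|_{L^2(0,T;H^1)}\le C_T\|(\varphi^1,\psi^1)\|_{\X}$; existence and uniqueness of $W$, hence of $(\varphi,\psi)$, in $\ZZ_T$ follow from a contraction in $\ZZ_T$ built on the linear estimate above and the Duhamel formula (the first--order coupling being a small perturbation on short time, then iterated; equivalently, a bounded perturbation of the two decoupled KdV semigroups), and feeding the bound for $\|\widetilde f_j\|_{L^1(0,T;L^2)}$ into the scalar trace estimate gives $\sup_y\|\partial_y^k\widetilde w_j(y,\cdot)\|_{H^{(1-k)/3}(0,T)}\le C_T\|(\varphi^1,\psi^1)\|_{\X}$ for $k=0,1,2$. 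Finally, $W\mapsto U=PW$ and $w_j\mapsto\widetilde w_j$ are fixed bounded isomorphisms on each trace space $H^s(0,T)$ (the reversal $y\mapsto L-y$, $s\mapsto T-s$ being an isometry of $H^s(0,T)$ and of $L^2(0,L)$), so transporting the estimates back to $(\varphi,\psi)$ produces exactly \eqref{hr4_int} (with $\|(\varphi^1,\psi^1)\|_{\X}$ in place of the equivalent norm of the data on the right-hand side).

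\emph{Main obstacle.} The only genuinely delicate ingredient is the scalar trace estimate at the endpoint exponents: the $H^{1/3}(0,T)$ regularity of $w$ itself ($k=0$) and the $H^{-1/3}(0,T)$ regularity of $w_{xx}$ ($k=2$) lie beyond what energy/multiplier identities deliver and must come from the sharp smoothing of the Airy group combined with the boundary correction on the finite interval. Everything else — the diagonalization, the $k=1$ Kato estimate, the absorption of the first--order coupling, and the change of variables — is routine once those scalar bounds are granted; one must only be careful to estimate the coupling forcing $\widetilde f_j$ through the a priori $\ZZ_T$ bound, which is proved independently of the traces, so that no circularity is introduced.
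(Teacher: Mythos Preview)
Your proposal is correct and follows essentially the same route as the paper: diagonalize the third--order dispersion matrix to reduce to two scalar Airy--type problems, invoke the Bona--Sun--Zhang sharp trace estimates for the scalar KdV IBVP (including the $k=2$ case in $H^{-1/3}$), absorb the remaining first--order term $\tfrac{r}{c}\psi_x$ as an $L^1_tL^2_x$ forcing, and use the reversal $(x,t)\mapsto(L-x,T-t)$ to pass between the adjoint and the forward problem. The only difference is organizational: the paper first proves the trace estimates for the \emph{direct} nonhomogeneous system (its Proposition~\ref{prop1} and Theorem~\ref{teo1}) and then transfers them to the adjoint by change of variables, while you work with the adjoint throughout and bound the coupling forcing via an independently proved $\ZZ_T$ energy estimate rather than wrapping everything into a single fixed point; both are fine and yield the same conclusion.
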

The sharp Kato smoothing properties  of solutions of the Cauchy problem of the KdV  equation posed on the whole line $\mathbb{R}$ due to Kenig, Ponce and Vega \cite{kenigponcevega1991} will play an important role in the  proof of  Lemma \ref{sharp_int}. In what concerns the assertion $(ii)$, the observability inequality for the solutions of \eqref{int_linadj} is proved using multipliers  together with the Lemma \ref{sharp_int}. It is precisely the hidden regularity (sharp trace regularity) given by Lemma \ref{sharp_int} that enable us to prove Theorem B with less controls.

The program of this work was carried out for the particular choice of boundary control inputs
and aims to establish as a fact that such a model predicts the interesting qualitative properties initially
observed for the KdV equation. Consideration of this issue for nonlinear dispersive equations has received considerable attention, specially the problems
related to the study of the controllability properties.

\medskip

The plan of the present paper is as follows.

\medskip

---- In Section 2,  we show that the linear system associated to \eqref{gg1}-\eqref{gg2} is global well-posedness in $\ZZ_T$. Additionally, we present various estimates, among them Lemma \ref{sharp_int} for the solution of the adjoint system.

\medskip

----  Section 3 is intended to show the controllability of the linear system associated with \eqref{gg1} when four controls are considered in the boundary conditions. Moreover, when only one function is a control input the boundary controllability result is also proved. Here, the hidden regularities for the solutions of the adjoint system  presented in the Section 2 are used to prove observability inequalities associated to the control problem.

\medskip

---- In Section 4, we prove the local well-posedness of the system \eqref{gg1}-\eqref{gg2} in $\ZZ_T$. After that, the exact boundary controllability of the nonlinear system is proved \textit{via} contraction mapping principle.

\medskip

---- Finally, Section 5 contains some remarks and related problems.

\section{Well-posedness}
\subsection{Linear homogeneous system}
Firstly, we establish the well-posedness of the initial-value problem of the linear  system associated to (\ref{gg1})-(\ref{gg2}):
\begin{equation}\label{gglin}
\left\lbrace \begin{tabular}{l l}
$u_t + u_{xxx} + av_{xxx}  =0$, & in $(0,L)\times (0,T)$,\\
$v_t +\frac{r}{c}v_x+\frac{ab}{c}u_{xxx} + \frac{1}{c}v_{xxx} =0$, & in $(0,L)\times (0,T)$,\\
$u(0,t) =u(L,t) =u_{x}(L,t) = 0$,& in $(0,T)$,\\
$v(0,t) =v(L,t) =v_{x}(L,t) = 0$,& in $(0,T)$,\\
$u(x,0)=u^0(x), \quad v(x,0) = v^0(x)$, & in $(0,L)$.
\end{tabular}\right.
\end{equation}
Let us define the operator $A$ by
\begin{equation}
\label{eq:def-A} \displaystyle A \left( \begin{array}{c} u \\ v
\end{array}
\right) = -\left(
\begin{array}{cc}
\displaystyle \partial_{xxx} & a\partial_{xxx} \\
\displaystyle  \frac{ab}{c}\partial_{xxx}   &\frac{r}{c} \partial_{x} +\frac{1}{c} \partial_{xxx}
\end{array}\right)    \left(
\begin{array}{c}
u\\
v
\end{array}
\right)\quad
\end{equation}
with domain
$$D(A)=\left\{(u,v) \in (H^3(0,L))^2\,:
u(0)=v(0)=u(L)=v(L)=u_{x}(L)=v_{x}(L)=0\right\}\subset \X.$$
The linear system \eqref{gglin} can be written in abstract form  as
\begin{equation}\label{cauchy}
\begin{cases}
U_t=AU,\\
U(0)=U_0,
\end{cases}
\end{equation}
where $U:=(u,v)$ and $U_0:=(u^0,v^0)$.  We denote by  $A^*$ the adjoint operator of $A$, defined by
\begin{equation}\label{Adjunto}
 \displaystyle A^*\left( \begin{array}{cc} \varphi\\ \psi\end{array}\right) = \left(
\begin{array}{cc}
\displaystyle \partial_{xxx} &  \frac{ab}{c}\partial_{xxx} \\ \displaystyle
a\partial_{xxx} &  \displaystyle \frac{r}{c} \partial_{x} + \displaystyle\frac{1}{c} \partial_{xxx}
\end{array}
\right)
\left( \begin{array}{c} \varphi \\ \psi
\end{array}
\right)
\end{equation}
with domain
$$D(A^*)=\left\{(\varphi,\psi)\in (H^3(0,L))^2\,:\,
\varphi(0)=\psi(0)=\varphi(L)=\psi(L)=\varphi_{x}(0)=\psi_{x}(0)=0\right\}\subset \X.$$
The following results can be found in \cite{micuortegapazoto2009}.
\begin{prop} \label{A-disipativo}
The operator $A$ and its adjoint $A^*$ are dissipative in $\X$.
\end{prop}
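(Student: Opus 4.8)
The plan is to verify dissipativity straight from the definition: since everything in sight is real, it suffices to show that $\langle AU,U\rangle_{\X}\le 0$ for every $U=(u,v)\in D(A)$ and that $\langle A^{*}\Phi,\Phi\rangle_{\X}\le 0$ for every $\Phi=(\varphi,\psi)\in D(A^{*})$, where $\langle\cdot,\cdot\rangle_{\X}$ is the weighted inner product carrying the factor $b/c$ on the first component. The structural observation that drives the whole computation is that every differential operator occurring in $A$ (and in $A^{*}$) is of \emph{odd} order ($\partial_{x}$ and $\partial_{xxx}$), hence formally skew-adjoint. Consequently, after integration by parts all the volume contributions cancel and $\langle AU,U\rangle_{\X}$ collapses to a pure boundary expression. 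The weight $b/c$ is exactly what forces the two coupling (off-diagonal) contributions to appear with a common coefficient, so that the surviving boundary terms assemble into a single quadratic form in the free boundary data.

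First I would carry out the integrations by parts for $A$. Using the conditions in $D(A)$, namely $u(0)=u(L)=v(0)=v(L)=u_{x}(L)=v_{x}(L)=0$, the diagonal third-order terms give $\int_{0}^{L}u_{xxx}u\,dx=\tfrac12 u_{x}(0)^{2}$ and $\int_{0}^{L}v_{xxx}v\,dx=\tfrac12 v_{x}(0)^{2}$, while the transport term contributes $\int_{0}^{L}v_{x}v\,dx=0$. For the coupling I would use the symmetric identity $\int_{0}^{L}(u\,v_{xxx}+u_{xxx}\,v)\,dx=[\,u v_{xx}-u_{x}v_{x}+u_{xx}v\,]_{0}^{L}=u_{x}(0)v_{x}(0)$, in which every boundary contribution except $-u_{x}v_{x}$ at $x=0$ is killed by the domain conditions. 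Collecting the pieces and using that both cross terms carry the coefficient $ab/c$ (this is where the weight enters), one is left with
\begin{equation*}
\langle AU,U\rangle_{\X}=-\frac{1}{2c}\Big(b\,u_{x}(0)^{2}+2ab\,u_{x}(0)v_{x}(0)+v_{x}(0)^{2}\Big).
\end{equation*}

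It then remains only to check the sign. The bracket is the quadratic form of the symmetric matrix $\left(\begin{smallmatrix} b & ab\\ ab & 1\end{smallmatrix}\right)$, whose leading principal minor is $b$ and whose determinant is $b-a^{2}b^{2}=b(1-a^{2}b)$. These are positive precisely under the standing hypotheses $b>0$ and $1-a^{2}b>0$, so the form is positive definite; since $c>0$ this gives $\langle AU,U\rangle_{\X}\le 0$, i.e. $A$ is dissipative. The argument for $A^{*}$ is identical in structure: the same odd-order operators again reduce $\langle A^{*}\Phi,\Phi\rangle_{\X}$ to a boundary term, the only change being that $D(A^{*})$ fixes $\varphi_{x}(0)=\psi_{x}(0)=0$ in place of the conditions at $x=L$, so the surviving data are now $\varphi_{x}(L),\psi_{x}(L)$ and one obtains
\begin{equation*}
\langle A^{*}\Phi,\Phi\rangle_{\X}=-\frac{1}{2c}\Big(b\,\varphi_{x}(L)^{2}+2ab\,\varphi_{x}(L)\psi_{x}(L)+\psi_{x}(L)^{2}\Big)\le 0,
\end{equation*}
by positive definiteness of the same matrix. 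I expect the only delicate point to be the bookkeeping of the boundary terms in the coupling integral, specifically verifying that the weight $b/c$ makes the two cross terms share a common coefficient so that a genuine (definite) quadratic form appears rather than an indefinite cross expression; once that is in place, the sign of both $\langle AU,U\rangle_{\X}$ and $\langle A^{*}\Phi,\Phi\rangle_{\X}$ is dictated entirely by $b>0$ and $1-a^{2}b>0$.
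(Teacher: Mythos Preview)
The paper does not give a proof of this proposition; it simply cites \cite{micuortegapazoto2009}. Your computation for $A$ is correct and is exactly the intended energy argument: with the weight $b/c$ on the first component the two coupling contributions both acquire the coefficient $ab/c$, the volume integrals cancel, and the remaining boundary quadratic form is positive definite by $b>0$ and $1-a^{2}b>0$.

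There is, however, a point you glossed over in the $A^{*}$ part. The operator displayed in \eqref{Adjunto} is the formal adjoint of $A$ with respect to the \emph{unweighted} $L^{2}(0,L)^{2}$ product, not with respect to $\langle\cdot,\cdot\rangle_{\X}$. If you insert that operator into $\langle A^{*}\Phi,\Phi\rangle_{\X}$, the two coupling integrals $\int_{0}^{L}\varphi\,\psi_{xxx}$ and $\int_{0}^{L}\psi\,\varphi_{xxx}$ appear with the coefficients $ab^{2}/c^{2}$ and $a$, which differ unless $b=c$; they then cannot be combined through the symmetric boundary identity, a genuine volume term survives, and your displayed formula is not obtained. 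What your argument actually establishes is the dissipativity of the true Hilbert-space adjoint of $A$ in $\X$, whose differential expression is
\[
A^{*}_{\X}\begin{pmatrix}\varphi\\\psi\end{pmatrix}
=\begin{pmatrix}\varphi_{xxx}+a\,\psi_{xxx}\\[2pt]\dfrac{ab}{c}\,\varphi_{xxx}+\dfrac{r}{c}\,\psi_{x}+\dfrac{1}{c}\,\psi_{xxx}\end{pmatrix},
\]
i.e.\ \eqref{Adjunto} with the off-diagonal entries $a$ and $ab/c$ exchanged (equivalently, $A^{*}_{\X}=-A$ as a differential expression, with the domain $D(A^{*})$). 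For this operator the cross terms do share the common coefficient $ab/c$, and your conclusion
\[
\langle A^{*}_{\X}\Phi,\Phi\rangle_{\X}=-\frac{1}{2c}\bigl(b\,\varphi_{x}(L)^{2}+2ab\,\varphi_{x}(L)\psi_{x}(L)+\psi_{x}(L)^{2}\bigr)\le 0
\]
is correct. Since it is precisely this $\X$-adjoint that enters the Lumer--Phillips argument behind Theorem~\ref{homog}, your proof is the right one; just make explicit that the display \eqref{Adjunto} has the off-diagonal coefficients transposed (it is the adjoint in the unweighted product), otherwise the claim that ``the argument for $A^{*}$ is identical in structure'' is not justified as written.
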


As a consequence, we have that (see Corol. 4.4, page 15, in \cite{pazy}):

\begin{thm}\label{homog}
Let  $U_0\in \X$. There exists a unique (weak) solution $U=S(\,\cdot\,)U_0$ of \eqref{gglin} such that
\begin{equation}\label{sol}
U \in C\left([0,T];\X\right)\cap H^1\left(0,T;
(H^{-2}(0,L))^2\right).
\end{equation}
Moreover, if $U_0\in D(A)$ then \eqref{gglin} has a  unique (classical) solution $U$ such that
\begin{equation*}
U\in C([0,T];D(A))\cap C^1((0,T);\X).
\end{equation*}
\end{thm}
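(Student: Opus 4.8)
The plan is to realize \eqref{gglin} as the abstract Cauchy problem \eqref{cauchy} and apply the Lumer--Phillips theorem (equivalently, the Hille--Yosida/Lumer--Phillips version of semigroup theory as stated in Corollary 4.4 of \cite{pazy}) to the operator $A$ defined in \eqref{eq:def-A}. To invoke that result I need two ingredients: that $A$ is dissipative in $\X$, and that $A$ generates a $C_0$-semigroup of contractions, which by Lumer--Phillips follows once I show that both $A$ and $A^*$ are dissipative (so that $A$ is densely defined, closed, and the range condition holds). The dissipativity of $A$ and $A^*$ is exactly the content of Proposition \ref{A-disipativo}, which I am entitled to assume. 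Therefore the first step is simply to note that $D(A)$ is dense in $\X=(L^2(0,L))^2$, which is immediate since $D(A)$ contains $(C_c^\infty(0,L))^2$.

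Once density and the two dissipativity statements are in place, Lumer--Phillips gives that $A$ is the infinitesimal generator of a $C_0$-semigroup of contractions $\{S(t)\}_{t\ge 0}$ on $\X$. This yields, for any $U_0\in\X$, a unique mild (weak) solution $U(t)=S(t)U_0$ with $U\in C([0,T];\X)$, and for $U_0\in D(A)$ a unique classical solution with $U\in C([0,T];D(A))\cap C^1((0,T);\X)$; this is precisely the second displayed regularity statement. The remaining point is the additional regularity $U\in H^1(0,T;(H^{-2}(0,L))^2)$ asserted in \eqref{sol} for general $U_0\in\X$. For this I would argue by density: for $U_0\in D(A)$ one has $U_t=AU$, and reading off \eqref{eq:def-A} the operator $A$ maps $D(A)\subset(H^3(0,L))^2$ boundedly into $(H^{-2}(0,L))^2$ (each entry involves at most three spatial derivatives, and $\partial_x^3\colon L^2\to H^{-3}$, while on $H^3$ one gains enough to land in $H^{-2}$). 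Hence $U_t=AU$ is controlled in $(H^{-2}(0,L))^2$ by $\|U\|_{\X}$ uniformly via the contraction estimate $\|S(t)U_0\|_{\X}\le\|U_0\|_{\X}$, giving a bound on $\|U\|_{H^1(0,T;(H^{-2})^2)}$ by $C\|U_0\|_{\X}$ on the dense set $D(A)$; extending by density and using that $C([0,T];\X)$ convergence together with the uniform $H^1(0,T;(H^{-2})^2)$ bound passes to the limit then establishes \eqref{sol} for all $U_0\in\X$.

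The main obstacle is not the semigroup generation itself, which is handed to us through Proposition \ref{A-disipativo} and the standard citation, but the verification that $A$ acts continuously from $\X$ into $(H^{-2}(0,L))^2$ so that the temporal regularity in \eqref{sol} is genuinely of the stated order. Care is needed because $A$ is a third-order operator and a naive count would only place $AU$ in $H^{-3}$; the gain to $H^{-2}$ must be read off correctly from the structure of the coupling in \eqref{eq:def-A}, and the duality $\X$-pairing together with the boundary conditions in $D(A)$ must be used when integrating by parts against test functions to justify the distributional identity $U_t=AU$ in $(H^{-2}(0,L))^2$. Since the result is explicitly attributed to \cite{micuortegapazoto2009} and \cite{pazy}, I would keep this verification brief and lean on those references for the technical estimates.
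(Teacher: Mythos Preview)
Your reduction to the abstract Cauchy problem and appeal to Lumer--Phillips via Proposition~\ref{A-disipativo} is exactly what the paper does: it simply records that both $A$ and $A^*$ are dissipative and cites \cite[Corol.~4.4]{pazy} and \cite{micuortegapazoto2009}. So for the generation of the contraction semigroup and the classical-solution statement you are in line with the paper.

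The gap is in your justification of $U\in H^1(0,T;(H^{-2}(0,L))^2)$ for general $U_0\in\X$. Your density argument hinges on the claim that $\|AU\|_{(H^{-2})^2}$ is controlled by $\|U\|_{\X}=\|U\|_{(L^2)^2}$, but this is false for a third-order operator: $\partial_x^3$ maps $L^2$ only into $H^{-3}$, and nothing in the coupling structure of \eqref{eq:def-A} changes that count. The sentence ``while on $H^3$ one gains enough to land in $H^{-2}$'' conflates two different norms on the domain; knowing that $A:D(A)\to H^{-2}$ is bounded with the $H^3$-norm on the source does not give a bound in terms of $\|U\|_{L^2}$, so the density step does not close with only the contraction estimate $\|S(t)U_0\|_{\X}\le\|U_0\|_{\X}$.

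What actually produces the extra derivative is the Kato smoothing recorded in the theorem immediately following Theorem~\ref{homog}: one has $U\in L^2(0,T;(H^1(0,L))^2)$ with $\|U\|_{L^2(0,T;(H^1)^2)}\le c_0\|U_0\|_{\X}$. Since $\partial_x^3:H^1\to H^{-2}$ is bounded, this yields $U_t=AU\in L^2(0,T;(H^{-2})^2)$ with a bound by $\|U_0\|_{\X}$, and then density gives \eqref{sol}. In \cite{micuortegapazoto2009} this is how the $H^{-2}$ regularity is obtained; you should invoke the smoothing estimate rather than the bare contraction bound.
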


The next result reveals a gain of regularity for the weak solutions given by Theorem \ref{homog}.

\begin{thm}
Let $(u^0,v^0)$ in  $\X$ and $(u,v)$ the weak solution of \eqref{gglin}. Then, $$(u,v) \in L^2(0,T; (H^1(0,L))^2)$$ and there exists a positive constant $c_0$ such that
\begin{equation*}
\|(u,v)\|_{ L^2(0,T; (H^1(0,L))^2)} \leq c_0 \|(u^0,v^0)\|_{\X}.
\end{equation*}
Moreover, there exist two positive constants $c_1$ and $c_2$ such that
\begin{equation*}
\|(u_x(0,\cdot),v_x(0,\cdot))\|_{\X}^2 \leq c_1 \|(u^0,v^0)\|_{\X}^2.
\end{equation*}
and
\begin{equation*}
\|(u^0,v^0)\|_{\X}^2 \leq \frac{1}{T}\|(u,v)\|_{ L^2(0,T; \X)}^2+c_2\|(u_x(0,\cdot),v_x(0,\cdot))\|_{\X}^2.
\end{equation*}
\end{thm}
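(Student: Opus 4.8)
The plan is to run the classical multiplier method of Rosier and Cerpa--Pazoto. I would first establish all three estimates for smooth solutions --- i.e. assuming $(u^0,v^0)\in D(A)$, so that $(u,v)$ is the classical solution of \eqref{gglin} given by Theorem~\ref{homog} --- and then pass to the limit using the density of $D(A)$ in $\X$; the a priori bounds obtained in the smooth case are precisely what justifies that the traces $u_x(0,\cdot),v_x(0,\cdot)$ make sense for a general weak solution.

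The starting point is the energy identity. Multiplying the first equation of \eqref{gglin} by $\frac{b}{c}u$ and the second by $v$, adding, and integrating over $(0,L)$, all the contributions of the third-order terms at the boundary collapse --- using $u(0)=u(L)=u_x(L)=0$ and the analogous conditions for $v$, and noting that the $\frac rc v v_x$ term integrates to zero --- into
\begin{equation*}
\tfrac12\tfrac{d}{dt}\|(u,v)(t)\|_{\X}^2 = -\tfrac1{2c}\big[\,b\,u_x(0,t)^2 + 2ab\,u_x(0,t)v_x(0,t) + v_x(0,t)^2\,\big].
\end{equation*}
The quadratic form on the right has matrix $\left(\begin{smallmatrix} b & ab \\ ab & 1\end{smallmatrix}\right)$, whose determinant $b(1-a^2b)$ and trace $b+1$ are both positive by the standing hypotheses $1-a^2b>0$ and $b>0$; hence it is positive definite, bounded below by $\lambda(X^2+Y^2)$ with $\lambda>0$ its smallest eigenvalue. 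Integrating in time and discarding the nonnegative term $\|(u,v)(T)\|_{\X}^2$ yields the dissipation bound $\|(u,v)(t)\|_{\X}\le\|(u^0,v^0)\|_{\X}$ for all $t$, together with $\int_0^T\big(u_x(0,t)^2+v_x(0,t)^2\big)\,dt\le C\|(u^0,v^0)\|_{\X}^2$, which is the estimate involving $c_1$.

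For the Kato smoothing estimate I would use the multiplier $x$: multiplying the first equation by $\frac{b}{c}xu$, the second by $xv$, adding and integrating over $(0,L)$, the boundary terms vanish again by the six homogeneous conditions, leaving
\begin{equation*}
\tfrac12\tfrac{d}{dt}\int_0^L x\big(\tfrac bc u^2+v^2\big)\,dx + \tfrac3{2c}\int_0^L\big(b\,u_x^2 + 2ab\,u_xv_x + v_x^2\big)\,dx = \tfrac r{2c}\int_0^L v^2\,dx .
\end{equation*}
Integrating in $t$, transferring the $x$-weighted energies at $t=0,T$ and the term $\frac r{2c}\int_0^T\!\!\int_0^L v^2$ to the right-hand side, and bounding them by $C\|(u^0,v^0)\|_{\X}^2$ via the $L^\infty_tL^2_x$ bound already obtained, the positive definiteness of the same quadratic form gives $\int_0^T\!\!\int_0^L(u_x^2+v_x^2)\,dx\,dt\le C\|(u^0,v^0)\|_{\X}^2$. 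Combined with $\|(u,v)\|_{L^2(0,T;\X)}^2\le T\|(u^0,v^0)\|_{\X}^2$, this produces the full $L^2(0,T;(H^1(0,L))^2)$ bound, i.e. the constant $c_0$.

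Finally, the third inequality follows by integrating the energy identity once more. Writing $\|(u,v)(t)\|_{\X}^2 = \|(u^0,v^0)\|_{\X}^2 - \frac1c\int_0^t[\,\cdots\,]\,ds$ with $[\,\cdots\,]$ the boundary quadratic form above, and integrating over $t\in(0,T)$, Fubini gives
\begin{equation*}
T\|(u^0,v^0)\|_{\X}^2 = \int_0^T\|(u,v)(t)\|_{\X}^2\,dt + \tfrac1c\int_0^T(T-s)\big[\,b\,u_x(0,s)^2+2ab\,u_x(0,s)v_x(0,s)+v_x(0,s)^2\,\big]\,ds,
\end{equation*}
and bounding the last integral by $\frac Tc\int_0^T[\,\cdots\,]\,ds\le c_2\,T\,\|(u_x(0,\cdot),v_x(0,\cdot))\|_{\X}^2$ and dividing by $T$ gives the claim. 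The only genuinely delicate points are the verification that the boundary quadratic form is positive definite --- this is exactly where $1-a^2b>0$ and $b>0$ enter --- and the careful bookkeeping of the boundary terms in the two multiplier identities, which requires all six conditions in \eqref{gglin}; the density/limiting step needed to make the traces meaningful for weak solutions is then routine.
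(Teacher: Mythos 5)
Your proof is correct, and it is the standard multiplier argument (energy identity for the dissipation/trace bound, the multiplier $x$ for the Kato smoothing, and a time integration of the energy identity for the last inequality) that this result rests on; the paper itself states the theorem without proof, deferring implicitly to \cite{micuortegapazoto2009}, where exactly this computation is carried out. All the key points check out: the boundary quadratic form $b\,X^2+2ab\,XY+Y^2$ is indeed positive definite under $b>0$, $1-a^2b>0$, and the weights $\frac{b}{c}$, $1$ are precisely those of the inner product on $\X$, so the cross terms $\int u_{xx}v_x$ cancel as you claim.
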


\subsection{Linear nonhomogeneous system}

In this subsection, we study the nonhomogeneous system corresponding to \eqref{gg1}-\eqref{gg2}:

\begin{equation}\label{gglin2}
\begin{cases}
u_t + u_{xxx} + a v_{xxx} =0, & \text{in} \,\, (0,L)\times (0,T),\\
v_t +\frac{r}{c} v_x +\frac{ab}{c} u_{xxx} +\frac{1}{c} v_{xxx}  =0,  & \text{in} \,\, (0,L)\times (0,T), \\
u(0,t)=h_0(t),\,\,u(L,t)=h_1(t),\,\,u_{x}(L,t)=h_2(t), & \text{in} \,\, (0,T),\\
v(0,t)=g_0(t),\,\,v(L,t)=g_1(t),\,\,v_{x}(L,t)=g_2(t), & \text{in} \,\, (0,T),\\
u(x,0)= u^0(x), \quad v(x,0)=  v^0(x), & \text{in} \,\, (0,L).\\
\end{cases}
\end{equation}

The next well-posedness result can be found in \cite[Theorems 2.3, 2.4]{micuortegapazoto2009}.

\begin{thm}\label{mop_1}
There exists a unique linear and continuous map
$$\Psi : \X \times (H^1_0(0,T))^2 \times (H^1_0(0,T))^2\times \X \rightarrow C([0,T];\X)\cap L^2(0,T;(H^1(0,L))^2)$$
such that, for any $(u^0,v^0)$ in $D(A)$ and $h_i,g_i$ in $C_0^2[0,T]$, with $i=0,1,2$,
$$\Psi((u^0,v^0),(h_0,g_0,h_1,g_1,h_2,g_2))=(u,v)$$
where $(u,v)$ is the unique classical solution of \eqref{gglin2}. Moreover, there exists a positive constant $C>0$ such that
\begin{equation*}
\|(u,v)\|_{C([0,T];\X)}^2+\|(u,v)\|_{L^2(0,T;(H^1(0,L))^2)} \leq C \left[\|(u^0,v^0)\|_{\X}^2+\sum_{i=0}^{2}(\|h_i\|_{H^1(0,T)}+\|g_i\|_{H^1(0,T)})\right].
\end{equation*}
\end{thm}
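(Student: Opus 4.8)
The plan is to reduce \eqref{gglin2} to a problem with homogeneous boundary conditions by subtracting an explicit lifting of the boundary data, to solve the resulting abstract Cauchy problem with the semigroup generated in Theorem \ref{homog}, and then to read off the two estimates from contractivity together with the gain-of-regularity theorem established just above. First I would fix smooth profiles $\phi_0,\phi_1,\phi_2\in C^\infty([0,L])$ normalized by $\phi_0(0)=1$, $\phi_0(L)=\phi_0'(L)=0$; $\phi_1(L)=1$, $\phi_1(0)=\phi_1'(L)=0$; and $\phi_2'(L)=1$, $\phi_2(0)=\phi_2(L)=0$ (low-degree polynomials suffice), and set $w(x,t)=h_0(t)\phi_0(x)+h_1(t)\phi_1(x)+h_2(t)\phi_2(x)$, with $z$ defined analogously from $g_0,g_1,g_2$. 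Then $(w,z)$ reproduces the six boundary values in \eqref{gg2}, so $(\tilde u,\tilde v):=(u-w,v-z)$ solves $\tilde U_t=A\tilde U+F$ with the homogeneous boundary conditions of \eqref{gglin}, where $F$ is the explicit linear combination of $w_t,w_{xxx},z_t,z_x,z_{xxx}$ produced by inserting $(w,z)$ into \eqref{gglin2}, and $\tilde U(0)=U_0-(w(\cdot,0),z(\cdot,0))$. Because the data lie in $C_0^2[0,T]$ they vanish at $t=0$, which forces $w(\cdot,0)=z(\cdot,0)=0$; hence $\tilde U(0)=U_0\in D(A)$, the initial/boundary compatibility needed for a classical solution holds, and the two available time derivatives give $F\in C^1([0,T];\X)$.

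Next I would invoke Theorem \ref{homog}: $A$ generates a $C_0$-semigroup of contractions $S(\cdot)$ on $\X$, so Duhamel's formula $\tilde U(t)=S(t)\tilde U(0)+\int_0^t S(t-s)F(s)\,ds$ yields the unique classical solution of the reduced system, and $U=\tilde U+(w,z)$ is then the unique classical solution of \eqref{gglin2}. Contractivity gives $\|\tilde U(t)\|_{\X}\le \|U_0\|_{\X}+\|F\|_{L^1(0,T;\X)}$, and since every term of $F$ carries at most one time derivative of a boundary datum, a Cauchy--Schwarz step bounds $\|F\|_{L^1(0,T;\X)}$ by a constant multiple of $\sum_{i=0}^{2}(\|h_i\|_{H^1(0,T)}+\|g_i\|_{H^1(0,T)})$. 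Together with the elementary bound for $(w,z)$ this produces the $C([0,T];\X)$ part of the estimate.

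For the $L^2(0,T;(H^1)^2)$ gain I would split the two Duhamel pieces. The homogeneous term $S(\cdot)\tilde U(0)$ lies in $L^2(0,T;(H^1(0,L))^2)$ with norm at most $c_0\|U_0\|_{\X}$ by the gain-of-regularity theorem stated above. For the forced term I would run a Kato-type energy identity on the reduced equations, pairing them in the $b/c$-weighted inner product of $\X$ against the multipliers $\tilde u,\,x\tilde u$ (respectively $\tilde v,\,x\tilde v$). After integration by parts the third-order terms generate a nonnegative boundary contribution at $x=0$ and the interior quantity $\iint(\tilde u_x^2+\tilde v_x^2)$, which is exactly the $L^2(0,T;(H^1)^2)$ seminorm, controlled by $\|U_0\|_{\X}$ and $\|F\|_{L^1(0,T;\X)}$. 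Combining the three steps, together with the trivial regularity of the lifting, closes the full inequality. Finally, since the whole construction is linear in $\big((u^0,v^0),(h_0,g_0,h_1,g_1,h_2,g_2)\big)$ and $D(A)\times(C_0^2[0,T])^6$ is dense in the product domain, the estimate lets me extend $\Psi$ uniquely to a continuous linear map on all of $\X\times(H^1_0(0,T))^2\times(H^1_0(0,T))^2\times\X$, the displayed bound being preserved in the limit.

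The hard part will be the coupled energy estimate of the last step. The two dispersive coupling terms $a\,\tilde v_{xxx}$ and $\tfrac{ab}{c}\,\tilde u_{xxx}$ mix the equations, and only the specific $b/c$-weighting built into the inner product on $\X$, combined with the structural hypothesis $1-a^2b>0$ (and $b,c>0$), recombines the resulting cross terms into a coercive interior quadratic form and a dissipative boundary form. This is precisely the algebraic mechanism underlying the dissipativity of $A$ and $A^*$ recorded in Proposition \ref{A-disipativo}; making it deliver the $H^1$ smoothing uniformly, with the boundary multiplier terms absorbed into $\|U_0\|_{\X}$ and $\|F\|_{L^1(0,T;\X)}$, is where the care is needed.
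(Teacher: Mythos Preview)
Your proposal is sound: the lifting to homogeneous boundary data, the Duhamel representation via the contraction semigroup of Theorem~\ref{homog}, and the $x$-multiplier energy identity for the $L^2_tH^1_x$ smoothing (exploiting the $b/c$-weighted inner product and the structural condition $1-a^2b>0$) is exactly the standard route for this kind of result. Note, however, that the present paper does not give its own proof of this statement at all---it simply quotes the result from \cite[Theorems~2.3,~2.4]{micuortegapazoto2009}---so there is no in-paper argument to compare against; your outline is precisely the strategy employed in that reference.
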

Our main goal in this subsection is to improve Theorem \ref{mop_1}. We will obtain some important trace estimates, using a new tool, which reveals the sharp Kato smoothing (or hidden regularity) for the solution of system \eqref{gglin2}. In order to do that, we consider the system
\begin{equation}\label{gglin3}
\left\lbrace \begin{tabular}{l l}
$u_t + u_{xxx} + av_{xxx}  =f$, & in $(0,L)\times (0,T)$,\\
$v_t +\frac{ab}{c}u_{xxx} + \frac{1}{c}v_{xxx} =s$, & in $(0,L)\times (0,T)$,\\
$u(0,t) = h_0(t),\,\,u(L,t) = h_1(t),\,\,u_{x}(L,t) = h_2(t)$,& in $(0,T)$,\\
$v(0,t) =g_0(t),\,\,v(L,t) = g_1(t),\,\, v_{x}(L,t) = g_2(t)$,& in $(0,T)$,\\
$u(x,0)=u^0(x), \quad v(x,0) = v^0(x)$, & in $(0,L)$,
\end{tabular}\right.
\end{equation}
where $f=f(x,t)$ and $s=s(x,t)$. Then, we have the following result:
\begin{prop}\label{prop1}
Let $T>0$ be given, for any $(u^0,v^0)$ in $\X$, $f,s$ in $L^1(0,T;L^2(0,L))$ and $\overrightarrow{h}:=(h_0,h_1,h_2)$, $\overrightarrow{g}:=(g_0,g_1,g_2)$ in $\HH_T$, the IBVP (\ref{gglin3}) admits a unique solution $(u,v) \in \ZZ_T$, with
\begin{equation}\label{hr1}
\partial_x^k u,\partial_x^k v \in L^{\infty}_x(0,L;H^{\frac{1-k}{3}}(0,T)),  \quad k=0,1,2.
\end{equation}
Moreover, there exist $C>0$, such that
\begin{multline}\label{hr2}
\|(u,v)\|_{\ZZ_T}+\sum_{k=0}^{2}\|(\partial^k_xu,\partial^k_xv)\|_{L^{\infty}_x(0,L;H^{\frac{1-k}{3}}(0,T))}\leq C\left\lbrace \|(u^0,v^0)\|_{(L^2(0,L))^2}+\|(\overrightarrow{h},\overrightarrow{g})\|_{\HH_T} \right. \\
\left.+\|(f,s)\|_{L^1(0,T;L^2(0,L))} \right\rbrace.
\end{multline}
\end{prop}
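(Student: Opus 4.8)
The plan is to reduce the inhomogeneous, non-homogeneous-boundary problem (\ref{gglin3}) to a chain of simpler problems for which the sharp Kato smoothing estimates are either already available (for the single KdV equation, from Kenig--Ponce--Vega \cite{kenigponcevega1991} and the boundary-value theory built on it) or follow from the semigroup theory of Section 2. The first step is to diagonalize the principal part of the system. Because $1-a^2b>0$ and $b,c>0$, the $2\times 2$ matrix $\begin{pmatrix} 1 & a\\ ab/c & 1/c\end{pmatrix}$ governing the third-order terms can be put, by a constant linear change of unknowns $(u,v)\mapsto(w_1,w_2)$, into a form in which the two equations decouple at the level of $\partial_{xxx}$ (the remaining coupling, through the lower-order term $\tfrac{r}{c}v_x$, is harmless for trace regularity since it is a bounded perturbation). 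I would carry this out explicitly, recording that the change of variables is an isomorphism of $\X$ and of $\HH_T$, so that (\ref{hr2}) for the system is equivalent to the corresponding pair of estimates for two scalar KdV-type equations with right-hand sides in $L^1(0,T;L^2(0,L))$ and boundary data in $H^{1/3}\times H^{1/3}\times L^2$.

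Next, for each scalar equation I would split the solution by linearity into three pieces: (a) the solution with the given $L^2$ initial datum and zero forcing, zero boundary data, handled by Theorem \ref{homog} together with the smoothing theorem that follows it, supplemented by the sharp boundary trace bound which is exactly the scalar analogue of (\ref{hr4_int}) and which one proves by the Kato multiplier $(x\,\text{or}\,T-t)$-weighted energy estimates combined with the Kenig--Ponce--Vega gain of $H^{1/3}_t$ for the traces; (b) the solution with zero initial datum and zero boundary data but forcing $f$ (resp.\ $s$) in $L^1(0,T;L^2)$, handled by Duhamel's formula, integrating the estimate from (a) in the time variable — this is where the $L^1_t$ norm of the forcing enters; and (c) the solution with zero initial datum, zero forcing, and the prescribed inhomogeneous boundary data, which is the genuinely new contribution and is treated by the boundary-integral-operator (Duhamel-on-the-boundary) construction: one writes the solution via the explicit fundamental solution of the Airy operator, checks that the resulting boundary operators map $H^{1/3}(0,T)\times H^{1/3}(0,T)\times L^2(0,T)$ continuously into $\ZZ_T$ and into $\bigcap_k L^\infty_x(0,L;H^{(1-k)/3}(0,T))$, and then verifies the boundary conditions are assumed in the trace sense. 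Adding the three pieces and undoing the diagonalization gives (\ref{hr2}); uniqueness follows from the energy estimate for (\ref{gglin3}) with zero data, or equivalently from uniqueness in Theorem \ref{homog} applied to the difference of two solutions after subtracting a fixed lift of the boundary data.

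The main obstacle I expect is step (c): establishing that the boundary-to-solution map has the claimed sharp regularity, i.e.\ that prescribing $u_x(L,\cdot)\in L^2(0,T)$ (rather than in $H^{1/3}$) still yields a solution in $C([0,T];L^2)\cap L^2(0,T;H^1)$ with the full set of trace bounds (\ref{hr1}). For the single KdV equation on a finite interval this is the delicate part of the Bona--Sun--Zhang / Kenig--Ponce--Vega circle of estimates, requiring careful Fourier-restriction-type analysis of the Airy boundary operators; here one must also check that the coupling does not destroy it, which is why the diagonalization in step one is essential. A secondary technical point is the compatibility of the two equations' boundary data under the change of variables — the transformed boundary conditions mix $h_i$ and $g_i$, but since the mixing is by a fixed invertible matrix and the three scalar boundary spaces $H^{1/3},H^{1/3},L^2$ are each preserved componentwise, no loss occurs. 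Once (c) is in hand, assembling the full estimate (\ref{hr2}) is routine bookkeeping, and Lemma \ref{sharp_int} is then the special case $f=s=0$, $\overrightarrow{h}=\overrightarrow{g}=0$ of the homogeneous-boundary part of the construction applied to the adjoint operator $A^*$, which has the same structure.
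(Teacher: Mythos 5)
Your proposal is correct and follows essentially the same route as the paper: a constant linear change of unknowns diagonalizes the third-order part into two scalar KdV initial-boundary-value problems, whose sharp Kato smoothing (the paper's Proposition \ref{prop2}, built on Bona--Sun--Zhang for $k=0,1$ plus an explicit Laplace-transform computation for the $k=2$ trace) is then transported back through the isomorphism of $\X$ and $\HH_T$. The one piece you defer --- the boundedness of the boundary-integral operator on $H^{1/3}\times H^{1/3}\times L^2$ with the second-derivative trace in $H^{-1/3}$ --- is exactly the part the paper works out in detail, and you correctly identify both its location and the technique needed.
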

To prove the Proposition \ref{prop1}, we need an auxiliary result.
\begin{prop}\label{prop2}
Consider the following nonhomogeneous Korteweq-de Vries equation
\begin{equation}
\begin{cases}\label{kdv}
u_t+ \alpha u_{xxx}=f,& \text{in} \,\, (0,L)\times (0,T), \\
u(0,t) = h_0(t),\,\,u(L,t) = h_1(t),\,\,u_{x}(L,t) = h_2(t), & \text{in} \,\, (0,T),\\
u(x,0)=u^0(x), & \text{in} \,\, (0,L).
\end{cases}
\end{equation}
For any $u^0 \in L^2(0,L)$, $f \in L^1(0,T;L^2(0,L))$,  $\overrightarrow{h}:=(h_0,h_1,h_2) \in \HH_T$ and $\alpha \in \R$,  the IBVP (\ref{kdv}) admits a unique solution
\begin{equation*}
u \in \X_T:=C([0,T]; L^2(0,L))\cap L^2(0,T; H^1(0,L))
\end{equation*}
with
\begin{equation}\label{hr1'}
\partial_x^k u \in L^{\infty}_x(0,L;H^{\frac{1-k}{3}}(0,T)),  \quad k=0,1,2.
\end{equation}
Moreover, there exist $C>0$, such that
\begin{equation}\label{hr2'}
\|u\|_{\X_T}+\sum_{k=0}^{ 2 }\|\partial^k_xu\|_{L^{\infty}_x(0,L;H^{\frac{1-k}{3}}(0,T))} \leq C\left\lbrace \|u^0\|_{L^2(0,L)}+\|(\overrightarrow{h},\overrightarrow{g})\|_{\HH_T}+\|(f,s)\|_{L^1(0,T;L^2(0,L))} \right\rbrace.
\end{equation}
\end{prop}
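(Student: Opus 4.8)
The plan is to reduce the initial–boundary value problem \eqref{kdv} to the pure Cauchy problem for $u_t+\alpha u_{xxx}=0$ on the whole line $\R$ --- for which the sharp Kato smoothing estimates of Kenig, Ponce and Vega \cite{kenigponcevega1991} are available --- and to absorb the boundary data $\overrightarrow{h}=(h_0,h_1,h_2)$ through an explicit \emph{boundary integral operator} built by the Fourier (Laplace) transform in time, exactly as in the treatment of the single KdV equation in \cite{caicedo_caspistrano_zhang_2015} and the references therein. First I would split $u=u_1+u_2+u_3$: let $W_\R(\cdot)$ denote the free group generated by $-\alpha\partial_x^3$ on $\R$, pick an $L^2$-extension $\widetilde{u^0}$ of $u^0$ to $\R$ and an $L^1(0,T;L^2(\R))$-extension $\widetilde f$ of $f$, and set $u_1(t)=W_\R(t)\widetilde{u^0}$ and $u_2(t)=\int_0^t W_\R(t-\tau)\widetilde f(\tau)\,d\tau$. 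Classical theory on $\R$ gives $u_1,u_2\in C([0,T];L^2(\R))\cap L^2(0,T;H^1(0,L))$, and the sharp Kato trace estimates give, for $k=0,1,2$ and $i=1,2$,
\begin{equation*}
\sup_{x\in\R}\big\|\partial_x^k u_i(x,\cdot)\big\|_{H^{\frac{1-k}{3}}(0,T)}\le C\big(\|u^0\|_{L^2(0,L)}+\|f\|_{L^1(0,T;L^2(0,L))}\big).
\end{equation*}
In particular the \emph{corrected} boundary data
\begin{equation*}
\overrightarrow{h}^{\,\ast}:=\big(h_0-(u_1+u_2)(0,\cdot),\ h_1-(u_1+u_2)(L,\cdot),\ h_2-\partial_x(u_1+u_2)(L,\cdot)\big)
\end{equation*}
belongs to $\HH_T$ with $\|\overrightarrow{h}^{\,\ast}\|_{\HH_T}$ controlled by the right-hand side of \eqref{hr2'}.

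Next I would construct $u_3$, the solution of $u_t+\alpha u_{xxx}=0$ on $(0,L)$ with zero initial data and boundary data $\overrightarrow{h}^{\,\ast}$, by the transform method. Since $\tfrac13<\tfrac12$, each component of $\overrightarrow{h}^{\,\ast}$ extends from $(0,T)$ to $H^{1/3}(\R)$ (resp. $L^2(\R)$) boundedly and with no compatibility condition; taking the Fourier transform in $t$ one looks for
\begin{equation*}
\widehat{u_3}(x,\mu)=\sum_{j=1}^{3}c_j(\mu)\,e^{\lambda_j(\mu)x},
\end{equation*}
where $\lambda_1(\mu),\lambda_2(\mu),\lambda_3(\mu)$ are the three roots of $\alpha\lambda^3+i\mu=0$. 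After ordering the roots by the sign of $\operatorname{Re}\lambda_j(\mu)$, the coefficients $c_j(\mu)$ are obtained by inverting the $3\times3$ system produced by the three boundary conditions $u_3(0,\cdot)=h_0^\ast$, $u_3(L,\cdot)=h_1^\ast$, $\partial_xu_3(L,\cdot)=h_2^\ast$; the determinant of this Wronskian-type matrix does not vanish for real $\mu$. One then analyses the asymptotics of $\lambda_j(\mu)$ and of the inverse matrix both for $|\mu|\gg1$ and near $\mu=0$, and uses Plancherel in $t$ to conclude that $u_3\in C([0,T];L^2(0,L))\cap L^2(0,T;H^1(0,L))$ and that $\sup_{0\le x\le L}\|\partial_x^k u_3(x,\cdot)\|_{H^{(1-k)/3}(0,T)}\le C\|\overrightarrow{h}^{\,\ast}\|_{\HH_T}$ for $k=0,1,2$; the precise weights --- $H^{1/3}$ for $h_0,h_1$ and $L^2$ for $h_2$ --- are dictated by the scaling $\mu\sim\lambda^3$.

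Summing the three pieces then produces a solution $u=u_1+u_2+u_3\in\X_T$ of \eqref{kdv} satisfying \eqref{hr1'} and the bound \eqref{hr2'}. Uniqueness I would obtain from the classical $L^2$-well-posedness of the linear KdV equation (the scalar counterpart of Theorems \ref{homog} and \ref{mop_1}) together with a standard energy/duality argument: the difference of two solutions solves the homogeneous problem with zero data in $C([0,T];L^2(0,L))\cap L^2(0,T;H^1(0,L))$, hence vanishes, which also identifies the distributional solution constructed above with the variational one, so that all the boundary traces used above are meaningful. The main obstacle is the construction and, above all, the estimation of the boundary integral operator with the \emph{sharp} number of derivatives in time: proving the $H^{(1-k)/3}$ trace bounds and the $L^2_tH^1_x$ bound uniformly in the data requires a careful stationary-phase / Plancherel analysis of the three symbols $\lambda_j(\mu)$ and of the inverse of the boundary matrix, separating the low- and high-frequency regimes. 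This is where the bulk of the work lies; everything else is either classical (the pieces $u_1,u_2$) or routine bookkeeping.
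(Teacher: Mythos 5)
Your proposal is correct and follows essentially the same route as the paper: the paper cites Bona--Sun--Zhang for the cases $k=0,1$ (which already contain the three-piece decomposition into initial-data, forcing, and boundary contributions that you describe) and then carries out exactly your ``main obstacle'' step --- the Laplace/Fourier transform construction of the boundary integral operator, Plancherel in $t$, and the asymptotics of the ratios $\Delta_{j,m}^+/\Delta^+$ --- to obtain the new $k=2$ trace estimate $\sup_{0\le x\le L}\|\partial_x^2 u(x,\cdot)\|_{H^{-1/3}(0,T)}\le C\|\overrightarrow{h}\|_{\HH_T}$. The only presentational difference is that the paper writes out just this last estimate and defers the rest to the cited reference, whereas you re-derive the whole decomposition.
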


\begin{proof}
 When $k=0,1$ the result was proved by Bona, Sun and Zhang in \cite{bonasunzhang2003}. Therefore, for the sake of completeness, we prove the result for the case when $k=2$.

Proceeding as in \cite{bonasunzhang2003}, it is sufficient to prove that the solution $v$ of the following linear non-homogeneous boundary value problem,
\begin{equation}\label{a1}
\begin{cases}
v_t+  av_{xxx}=0,& \text{in} \,\, (0,L)\times (0,T), \\
v(0,t) = h_0(t),\,\,v(L,t) = h_1(t),\,\,v_{x}(L,t) = h_2(t), & \text{in} \,\, (0,T),\\
v(x,0)=0, & \text{in} \,\, (0,L).
\end{cases}
\end{equation}
satisfies
\begin{equation}\label{xxtrace}
\sup_{0 \leq x \leq L }\|\partial_x^2 v(x,\cdot)\|_{H^{-\frac{1}{3}}(0,T)}\leq C_T \|\overrightarrow{h}\|_{\HH_T}.
\end{equation}
Indeed, applying the Laplace transform with respect to $t$, \eqref{a1} is converted to
\begin{equation}\label{a2}
\begin{cases}
s \hat{v}(x,s)+  a\hat{v}_{xxx}(x,s)=0,& \text{in} \,\, (0,L)\times (0,T), \\
\hat{v}(0,s) =\hat{h}_0(s),\,\,\hat{v}(L,s) = \hat{h}_1(s),\,\,\hat{v}_{x}(L,s) = \hat{h}_2(s), & \text{in} \,\, (0,T),\\
\hat{v}(x,0)=0, & \text{in} \,\, (0,L).
\end{cases}
\end{equation}
where
\begin{equation*}
\hat{v}(x,s)=\int_{0}^{\infty} e^{-st}v(x,t)dt \quad \text{and} \quad
\hat{h}_j(s)=\int_{0}^{\infty} e^{-st}h_j(t)dt, \qquad j=0,1,2.
\end{equation*}
The solution $\hat{v}(x,s)$ can be written in the form  $\hat{v}(x,s)= \sum_{j=0}^{2}c_j(s)e^{-\lambda_j(s)x}$, where $\lambda_j(s)$ are the solutions of the characteristic equation $s+a\lambda^3=0$ and $c_j(s)$, solve the linear system
\begin{equation*}
\left(
\begin{array}{ccc}
 1&1&1 \\
e^{\lambda_0}&  e^{\lambda_1}&  e^{\lambda_2}\\
\lambda_0 e^{\lambda_0}& \lambda_1 e^{\lambda_1}& \lambda_2 e^{\lambda_2} \\
\end{array} \right)
\left(\begin{array}{ccc}
c_0 \\ c_1 \\c_2  \end{array}\right) =
\left(\begin{array}{ccc}
\hat{h}_0\\ \hat{h}_1\\ \hat{h}_2 \end{array}\right).
\end{equation*}
Using the Cramer rule, we obtain $c_j(s)=\frac{\Delta_j(s)}{\Delta(s)}$, $j=0,1,2$, where $\Delta(s)$ is the determinant of the coefficient matrix and $\Delta_j(s)$ the determinants of the matrices that are obtained by replacing the ith-column by the column vector $\overrightarrow{h}:=(\hat{h}_0(s),\hat{h}_1(s),\hat{h}_2(s))$. Taking the inverse Laplace transform of $\hat{v}$, yields
\begin{equation*}
v(x,t)=\frac{1}{2\pi i}\sum_{j=0}^{2}\int_{r-i\infty}^{r+i\infty}e^{st}\frac{\Delta_j(s)}{\Delta(s)}e^{\lambda_j(s)x}ds
\end{equation*}
for any $r>0$. Note that, $v$ may also be written in the form
\begin{equation}\label{aa1}
v(x,t)=\sum_{m=0}^2v_m(x,t),
\end{equation}
where $v_m(x, t)$ solves \eqref{a1} with $h_j = 0$ when $j \neq m$, $m, j = 0, 1, 2$. Thus, $v_m$ take the form
\begin{equation}\label{rep}
v_m(x,t)=\frac{1}{2\pi i}\sum_{j=0}^{2}\int_{r-i\infty}^{r+i\infty}e^{st}\frac{\Delta_{j,m}(s)}{\Delta(s)}e^{\lambda_j(s)x}\hat{h}_m(s)ds:=[W_{m}(t)h_m(t)](x)
\end{equation}
where  $\Delta_{j,m}(s)$ is obtained from $\Delta_j(s)$ by letting $\hat{h}_m \equiv 1$ and $\hat{h}_j\equiv 0$, for $j \neq m$, $j,m=0,1,2$. Moreover, note that, the right-hand sides are continuous with respect to $r$ for $r\geq 0$. As the left-hand sides do not depend on $r$, it follows that we may take $r=0$. Thus, we can write $v_m$ as
\begin{equation}\label{a4}
v_m(x,t)=v_m^{+}(x,t)+v_m^{-}(x,t),
\end{equation}
where
\begin{gather*}
v_m^+(x,t)=\frac{1}{2\pi i}\sum_{j=0}^{2}\int_{0}^{i\infty}e^{st}\frac{\Delta_{j,m}(s)}{\Delta(s)}e^{\lambda_j(s)x}\hat{h}_m(s)ds, \\
v_m^-(x,t)=\frac{1}{2\pi i}\sum_{j=0}^{2}\int^{0}_{-i\infty}e^{st}\frac{\Delta_{j,m}(s)}{\Delta(s)}e^{\lambda_j(s)x}\hat{h}_m(s)ds.
\end{gather*}
Making the substitution $s = ia \rho^3L^3$ with $\rho \geq 0$ in the characteristic equation, the three roots are given in terms of $\rho$ by
\begin{equation*}
\lambda_0(\rho)=iL\rho, \quad \lambda_1(\rho)=-iL\rho\left(\frac{1+i\sqrt{3}}{2}\right), \quad \lambda_2(\rho)=-iL\rho\left(\frac{1-i\sqrt{3}}{2}\right).
\end{equation*}
Thus, $v_m^+$ and $v_m^-$ have the following representation,
\begin{equation}\label{a5}
v_m^+(x,t)=\frac{3aL^3}{2\pi}\sum_{j=0}^{2}\int_{0}^{\infty}e^{ia\rho^3L^3t}\frac{\Delta_{j,m}^+(\rho)}{\Delta^+(\rho)}e^{\lambda_j^+(\rho)x}\hat{h}^+_m(\rho)\rho^2d\rho\quad \text{and} \quad v_m^-(x,t)=\overline{v_m^+(x,t)},
\end{equation}
where $\Delta_{j,m}^+(\rho)=\Delta_{j,m}(ia \rho^3L^3)$, $\Delta^+(\rho)=\Delta(ia \rho^3L^3)$, $\lambda_j^+(\rho)=\lambda_j(ia \rho^3L^3)$ and $\hat{h}^+_m(\rho)=\hat{h}_m(ia \rho^3L^3)$. Thus, we have
\begin{align*}
\partial_x^2 v_m^+(x,t)&= \frac{3aL^3}{2\pi}\sum_{j=0}^{2}\int_{0}^{\infty}e^{ia\rho^3L^3t}(\lambda_j^+(\rho))^2\frac{\Delta_{j,m}^+(\rho)}{\Delta^+(\rho)}e^{\lambda_j^+(\rho)x}\hat{h}^+_m(\rho)\rho^2d\rho \\
&=\frac{1}{2\pi}\sum_{j=0}^{2}\int_{0}^{\infty}e^{i\mu t}(\lambda_j^+(\theta(\mu)))^2\frac{\Delta_{j,m}^+(\theta(\mu))}{\Delta^+(\theta(\mu))}e^{\lambda_j^+(\theta(\mu))x}\hat{h}^+_m(\theta(\mu))d\mu,
\end{align*}
where $\theta(\mu)$ is the real solution of $\mu=a\rho^3L^3$, for $\rho \geq 0$. Here
\begin{equation*}
\lambda_0(\rho)=iL\rho, \quad \lambda_1(\rho)=-iL\rho\left(\frac{1+i\sqrt{3}}{2}\right), \quad \lambda_2(\rho)=-iL\rho\left(\frac{1-i\sqrt{3}}{2}\right).
\end{equation*}
Applying Plancherel Theorem (with respect to $t$), yields for any $x\in (0,L)$,
\begin{align*}
\|\partial_x^2 v_m^+(x,\cdot)\|^2_{H^{-\frac{1}{3}}(0,T)} &\leq \frac{1}{2\pi}\sum_{j=0}^{2}\int_{0}^{\infty} |\mu|^{-\frac{2}{3}} \left|(\lambda_j^+(\theta(\mu)))^2\frac{\Delta_{j,m}^+(\theta(\mu))}{\Delta^+(\theta(\mu))} e^{\lambda_j^+(\theta(\mu))x} \right|^2\left|\hat{h}_m^+(\theta(\mu))\right|^2 d\mu \\
&=\frac{1}{2\pi}\sum_{j=0}^{2}\int_{0}^{\infty} a^{-\frac23}\rho^{-2}L^{-2} \left|(\lambda_j^+(\rho))^2\frac{\Delta_{j,m}^+(\rho)}{\Delta^+(\rho)} e^{\lambda_j^+(\rho)x} \right|^2\left|\hat{h}_m^+(\rho)\right|^2(3aL^3\rho^2)d\rho, \\
&=\frac{3a^{-\frac13}L}{2\pi}\sum_{j=0}^{2}\int_{0}^{\infty} \left|(\lambda_j^+(\rho))^2\frac{\Delta_{j,m}^+(\rho)}{\Delta^+(\rho)} e^{\lambda_j^+(\rho)x} \right|^2\left|\hat{h}_m^+(\rho)\right|^2d\rho.
\end{align*}
On the other hand, note that
\begin{equation*}
\sup_{0\leq x\leq L} \left| e^{\lambda_0^+(\rho)x}\right| \leq C, \quad \sup_{0\leq x\leq L} \left| e^{\lambda_1^+(\rho)x}\right| \leq Ce^{\frac{\sqrt{3}}{2}\rho L}, \quad \sup_{0\leq x\leq L} \left| e^{\lambda_2^+(\rho)x}\right| \leq Ce^{-\frac{\sqrt{3}}{2}\rho L}.
\end{equation*}
Then, it follows that
\begin{align*}
\|\partial_x^2 v_m^+(x,\cdot)\|^2_{H^{-\frac{1}{3}}(0,T)} &\leq C\left\lbrace \int_{0}^{\infty} \rho^4\left|\frac{\Delta_{0,m}^+(\rho)}{\Delta^+(\rho)}  \right|^2\left|\hat{h}_m^+(\rho)\right|^2d\rho   +   \int_{0}^{\infty} \rho^4 e^{\sqrt{3}\rho L} \left|\frac{\Delta_{1,m}^+(\rho)}{\Delta^+(\rho)}  \right|^2\left|\hat{h}_m^+(\rho)\right|^2d\rho \right. \\
&\left. +   \int_{0}^{\infty} \rho^4e^{-\sqrt{3}\rho L}\left|\frac{\Delta_{2,m}^+(\rho)}{\Delta^+(\rho)}  \right|^2\left|\hat{h}_m^+(\rho)\right|^2d\rho\right\rbrace.
\end{align*}
Using the estimates of $\left| \frac{\Delta^+_{j,m}(\rho)}{\Delta^+(\rho)}\right|$ proved in \cite{bonasunzhang2003}, that is,
\begin{equation}\label{asymp1}
\begin{array}{|l|l|l|}
\hline
\dfrac{\Delta_{0,0}^+(\rho)}{\Delta^+(\rho)} \sim e^{-\frac{\sqrt{3}}{2}\rho L} & \dfrac{\Delta_{1,0}^+(\rho)}{\Delta^+(\rho)}  \sim e^{-\sqrt{3}\rho L} & \dfrac{\Delta_{2,0}^+(\rho)}{\Delta^+(\rho)}  \sim 1 \\
\hline
\dfrac{\Delta_{0,1}^+(\rho)}{\Delta^+(\rho)}  \sim 1 & \dfrac{\Delta_{1,1}^+(\rho)}{\Delta^+(\rho)}  \sim e^{-\frac{\sqrt{3}}{2}\rho L} & \dfrac{\Delta_{2,1}^+(\rho)}{\Delta^+(\rho)}  \sim 1\\
\hline
\dfrac{\Delta_{0,2}^+(\rho)}{\Delta^+(\rho)}  \sim \rho^{-1} & \dfrac{\Delta_{1,2}^+(\rho)}{\Delta^+(\rho)}  \sim   \rho^{-1} e^{-\frac{\sqrt{3}}{2}\rho L}  & \dfrac{\Delta_{2,2}^+(\rho)}{\Delta^+(\rho)}  \sim \rho^{-1} \\
\hline
\end{array}
\end{equation}
we obtain
\begin{align*}
\|\partial_x^2 v_0^+(x,\cdot)\|^2_{H^{-\frac{1}{3}}(0,T)} &\leq C \int_{0}^{\infty} \rho^4\left|\hat{h}_0^+(\rho)\right|^2d\rho =C \int_{0}^{\infty} \rho^4\left|\hat{h}_0(ia\rho^3L^3)\right|^2d\rho \\
&= C \int_{0}^{\infty} \rho^4\left| \int_0^{\infty} e^{-ia\rho^3 L^3 t} h_0(t)dt\right|^2d\rho.
\end{align*}
Setting $\mu=a\rho^3 L^3$, it follows that
\begin{align*}
\|\partial_x^2 v_0^+(x,\cdot)\|^2_{H^{-\frac{1}{3}}(0,T)} &= C \int_{0}^{\infty} \rho^4\left| \int_0^{\infty} e^{-ia\rho^3 L^3 t} h_0(t)dt\right|^2d\rho
&\leq C \int_{0}^{\infty} \mu^{\frac{2}{3}}\left| \int_0^{\infty} e^{-i\mu t} h_0(t)dt\right|^2d\mu \\
&\leq C \|h_0\|_{H^{\frac{1}{3}}(\R^+)}^2.
\end{align*}
Similarly, we obtain estimates for $\partial_x^2 v_1$ and $\partial_x^2 v_2$ in $H^{-\frac{1}{3}}(0,T)$. Indeed,
\begin{align*}
\|\partial_x^2 v_1^+(x,\cdot)\|^2_{H^{-\frac{1}{3}}(0,T)}
&\leq C \|h_1\|_{H^{\frac{1}{3}}(\R^+)}^2
\end{align*}
and
\begin{align*}
\|\partial_x^2 v_2^+(x,\cdot)\|^2_{H^{-\frac{1}{3}}(0,T)}
&\leq C \int_{0}^{\infty} \rho^2\left|\hat{h}_1^+(\rho)\right|^2d\rho \leq C \|h_2\|_{L^2(\R^+)}^2.
\end{align*}
Thus, \eqref{xxtrace} follows from \eqref{aa1}, \eqref{a4} and \eqref{a5}. We also observe that, as in \cite[Theorem 2.10]{bonasunzhang2003},
the solutions can be written in the form of the boundary integral operator $W_{bdr}$ as follows
\begin{equation}\label{rep1}
v(x,t)=[W_{bdr}\overrightarrow{h}](x,t)=\sum_{i=0}^2 [W_j(t)h_j](x),
\end{equation}
where $W_j$ is defined in \eqref{rep}.
\end{proof}

\begin{proof}[\textbf{Proof of Proposition \ref{prop1}.}]
Consider the change of variable
\begin{equation}\label{cv1}
\left\lbrace\begin{tabular}{l}
$u = 2a \widetilde{u} +2a \widetilde{v}$, \\
$v = \left(\left(\frac{1}{c}-1\right)+\lambda\right) \widetilde{u}+ \left(\left(\frac{1}{c}-1\right)-\lambda\right) \widetilde{v}$
\end{tabular}\right.
\end{equation}
with $\lambda=\sqrt{\left(\frac{1}{c}-1\right)^2+\frac{4a^2b}{c}}$. Thus, we can transform the linear system (\ref{gglin3}) into
\begin{equation}\label{kdvln1}
\left\{
\begin{array}{ll}\vspace{2mm}
\widetilde{u}_t + \alpha_{-}\widetilde{u}_{xxx} =\widetilde{f},\\
 \widetilde{v}_t +\alpha_{+}\widetilde{v}_{xxx}=\widetilde{s},  \\
\widetilde{u}(0,t) = \widetilde{h}_0(t), \  \widetilde{u}(L,t) = \widetilde{h}_1(t),  \  \widetilde{u}_{x}(L,t)  = \bar{h}_2(t) , \\
\widetilde{v}(0,t) = \widetilde{g}_0(t), \ \widetilde{v}(L,t)  =  \widetilde{g}_1(t), \ \widetilde{v}_{x}(L,t)  = \widetilde{g}_2(t) , \\
 \widetilde{u}(x,0)= \widetilde{u}^0(x), \quad \widetilde{v}(x,0)  = \widetilde{v}^0(x),
\end{array}
\right.
\end{equation}
where $\alpha_{\pm} = -\frac{1}{2}\left(\left(\frac{1}{c}-1\right)\pm \lambda\right)$ and
\begin{equation*}
\left\lbrace\begin{tabular}{l l l l}
$\widetilde{f}=-\frac{1}{2}\left(\frac{\alpha_+}{a\lambda}f+\frac{1}{\lambda}s\right)$, & $\widetilde{u}_0=-\frac{1}{2}\left(\frac{\alpha_{-}}{a\lambda}u^0-\frac{1}{\lambda}v^0\right)$, & $\widetilde{h}_i=-\frac{1}{2}\left(\frac{\alpha_{-}}{a\lambda}h_i-\frac{1}{\lambda}g_i\right),$ & $i=0,1,2,$ \\
\\
$\widetilde{s}=-\frac{1}{2}\left(\frac{\alpha_{-}}{a\lambda}f-\frac{1}{\lambda}s\right),$ & $\widetilde{v}_0=\frac{1}{2}\left(\frac{\alpha_{+}}{a\lambda}u^0-\frac{1}{\lambda}v^0\right)$, & $\widetilde{g}_i=\frac{1}{2}\left(\frac{\alpha_{+}}{a\lambda}h_i-\frac{1}{\lambda}g_i\right),$  & $i=0,1,2.$
\end{tabular}\right.
\end{equation*}
The system (\ref{kdvln1}) can be decouple into two KdV equations as follows:
\begin{equation}\label{chanvar}
\left\{
\begin{array}{ll}\vspace{2mm}
\widetilde{u}_t + \alpha_{-}\widetilde{u}_{xxx} =\widetilde{f},\\
\widetilde{u}(0,t) = \widetilde{h}_0(t), \  \widetilde{u}(L,t) = \widetilde{h}_1(t),  \\
\widetilde{u}_x(L,t)  = \widetilde{h}_2(t),\\
\widetilde{u}(0,x)= \widetilde{u}^0(x)
\end{array}
\right. \quad  \text{and}  \quad
\left\{
\begin{array}{ll}\vspace{2mm}
\widetilde{v}_t +\alpha_{+}\widetilde{v}_{xxx}=\widetilde{s},  \\
\widetilde{v}(0,t) = \widetilde{g}_0(t), \ \widetilde{v}(L,t)  =  \widetilde{g}_1(t), \\
\widetilde{v}_{x}(L,t)  = \widetilde{g}_2(t), \\
\widetilde{v}(x,0)  = \widetilde{v}^0(x).
\end{array}
\right.
\end{equation}
Note that for $\alpha_{\pm}$ to be nonzero, it is sufficient to assume that $a^2b \neq 1$. Then, it is easy to see that
\begin{equation*}
(\widetilde{u}^0, \widetilde{v}^0) \in \X,\quad (\widetilde{f},\widetilde{s}) \in L^1(0,T;(L^2(0,L))^2), \quad \overrightarrow{\widetilde{h}}, \overrightarrow{\widetilde{g}} \in \HH_T.
\end{equation*}
By Proposition  \ref{prop2}, we obtain the existence of $(\widetilde{u},\widetilde{v})$, solution of the system (\ref{chanvar}) belongs to $\ZZ_T$, such that
\begin{equation*}
\partial_x^k \widetilde{u},\partial_x^k \widetilde{v} \in L^{\infty}_x(0,L;H^{\frac{1-k}{3}}(0,T)),  \quad k=0,1,2
\end{equation*}
and
\begin{multline*}
\|(\widetilde{u},\widetilde{v})\|_{\ZZ_T}+\sum_{k=0}^{2}\|(\partial^k_x\widetilde{u},\partial^k_x\widetilde{v})\|_{L^{\infty}_x(0,L;H^{\frac{1-k}{3}}(0,T))}\leq C\left\lbrace \|(\widetilde{u}^0,\widetilde{v}^0)\|_{(L^2(0,L))^2}+\|(\overrightarrow{\widetilde{h}},\overrightarrow{\widetilde{g}})\|_{\HH_T} \right. \\
\left.+\|(\widetilde{f},\widetilde{s})\|_{L^1(0,T;L^2(0,L))} \right\rbrace.
\end{multline*}
Furthermore, as in \cite{bonasunzhang2003}, we can write $\widetilde{u}$ and $\widetilde{v}$  in its integral form:
\begin{equation*}
\widetilde{u}(t)=W_0^{-}(t)\widetilde{u}^0+W_{bdr}^{-}(t)\overrightarrow{\widetilde{h}} + \int_0^tW_0^{-}(t-\tau)\widetilde{f}(\tau)d\tau,
\end{equation*}
\begin{equation*}
\widetilde{v}(t)=W_0^{+}(t)\widetilde{v}^0+W_{bdr}^{+}(t)\overrightarrow{\widetilde{g}} + \int_0^tW_0^{+}(t-\tau)\widetilde{s}(\tau)d\tau,
\end{equation*}
where $\{W_0^{\pm}(t)\}_{t\geq 0}$ is the $C_0$-semigroup in the space $L^2(0,L)$ generated by the linear operator $$A^{\pm}=-\alpha_{\pm}g''',$$ with domain $$D(A^{\pm})=\{g \in H^3(0,L): g(0)=g(L)=g'(L)=0\},$$
and $\{W_{bdr}^{\pm}(t)\}_{t\geq 0}$ is the operator given in \eqref{rep1}. By using the change of variable, it is easy to see that
\begin{equation*}
\begin{cases}
u(t)=W_0^{-}(t)u^0+W_{bdr}^{-}(t)\overrightarrow{h} + \int_0^tW_0^{-}(t-\tau)f(\tau)d\tau, \\
v(t)=W_0^{+}(t)v^0+W_{bdr}^{+}(t)\overrightarrow{g} + \int_0^tW_0^{+}(t-\tau)s(\tau)d\tau.
\end{cases}
\end{equation*}
Therefore, the prove is complete.
\end{proof}

By using standard fixed point argument together with Propositions \ref{prop1} and \ref{prop2} we show the global well-posedness of the  system (\ref{gglin2}).

\begin{thm}\label{teo1}
Let $T>0$ be given. For any $(u^0,v^0)$ in $\X$ and $\overrightarrow{h}:=(h_0,h_1,h_2)$, $\overrightarrow{g}:=(g_0,g_1,g_2)$ in $\HH_T$, the IBVP (\ref{gglin2}) admits a unique solution $(u,v) \in \ZZ_T$, with
\begin{equation*}
\partial_x^k u,\partial_x^k v \in L^{\infty}_x(0,L;H^{\frac{1-k}{3}}(0,T)),  \quad k=0,1,2.
\end{equation*}
Moreover, there exist $C>0$, such that
\begin{multline}\label{hr3}
\|(u,v)\|_{\ZZ_T}+\sum_{k=0}^{2}\|(\partial^k_xu,\partial^k_xv)\|_{L^{\infty}_x(0,L;H^{\frac{1-k}{3}}(0,T))}\leq C\left\lbrace \|(u^0,v^0)\|_{(L^2(0,L))^2}+\|(\overrightarrow{h},\overrightarrow{g})\|_{\HH_T} \right. \\
\left.+\|(f,s)\|_{L^1(0,T;L^2(0,L))} \right\rbrace.
\end{multline}
\end{thm}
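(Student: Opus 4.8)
The system \eqref{gglin2} is obtained from the system \eqref{gglin3} treated in Proposition \ref{prop1} by adding the lower--order drift term $\frac{r}{c}v_x$ to the second equation, so the plan is to absorb this term into the source and solve \eqref{gglin2} by a contraction mapping argument built on Proposition \ref{prop1}. Concretely, for fixed data $(u^0,v^0)\in\X$, source terms $f,s\in L^1(0,T;L^2(0,L))$ and boundary data $\overrightarrow{h},\overrightarrow{g}\in\HH_T$, I would define, for $(u,v)\in\ZZ_T$, the map $\Gamma(u,v)$ to be the unique solution in $\ZZ_T$ furnished by Proposition \ref{prop1} of the system \eqref{gglin3} with the same initial and boundary data but with right--hand side $\bigl(f,\, s-\tfrac{r}{c}v_x\bigr)$. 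This is legitimate because $v\in L^2(0,T;(H^1(0,L))^2)$ forces $v_x\in L^2(0,T;(L^2(0,L))^2)\subset L^1(0,T;(L^2(0,L))^2)$, so the modified source still lies in $L^1(0,T;L^2(0,L))$; moreover a fixed point of $\Gamma$ is precisely a solution of \eqref{gglin2} in $\ZZ_T$ enjoying the trace regularity \eqref{hr1}.

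Next I would check that $\Gamma$ is a contraction on $\ZZ_T$ for $T$ small. Given $(u_1,v_1),(u_2,v_2)\in\ZZ_T$, the difference $\Gamma(u_1,v_1)-\Gamma(u_2,v_2)$ solves \eqref{gglin3} with zero initial and boundary data and source $\bigl(0,\,-\tfrac{r}{c}(v_1-v_2)_x\bigr)$, so the estimate \eqref{hr2} of Proposition \ref{prop1} gives
\[
\|\Gamma(u_1,v_1)-\Gamma(u_2,v_2)\|_{\ZZ_T}\le C\,\tfrac{|r|}{c}\,\|(v_1-v_2)_x\|_{L^1(0,T;L^2(0,L))}\le C\,\tfrac{|r|}{c}\,\sqrt{T}\,\|(u_1,v_1)-(u_2,v_2)\|_{\ZZ_T},
\]
the last step being Cauchy--Schwarz in time together with the definition of the norm of $\ZZ_T$. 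Choosing $T_0\in(0,1]$ with $C\tfrac{|r|}{c}\sqrt{T_0}<1$ (the constant in \eqref{hr2} being bounded for $T\le1$), Banach's fixed point theorem yields a unique solution of \eqref{gglin2} on $[0,T_0]$; the trace bounds and the estimate \eqref{hr3} on $[0,T_0]$ then follow by applying \eqref{hr2} to the fixed point itself and absorbing the resulting term $C\tfrac{|r|}{c}\sqrt{T_0}\,\|(u,v)\|_{\ZZ_{T_0}}$ into the left--hand side.

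Finally, since the equation is linear and the step length $T_0$ depends only on $r$, $c$ and the constant in \eqref{hr2}, not on the size of the data, I would cover $[0,T]$ by finitely many intervals of length at most $T_0$ and propagate the solution, taking at each step the value of the solution at the previous endpoint as the new initial datum (it belongs to $\X$ because the solution lies in $C([0,T];\X)$); summing the estimates \eqref{hr3} over the subintervals yields \eqref{hr3} on $[0,T]$ with a constant depending only on $T$, and uniqueness on $[0,T]$ is inherited from uniqueness on each subinterval. I do not anticipate a genuine obstacle in this scheme: the only delicate point is the bookkeeping of constants under concatenation, and the true engine of the argument is the Kato smoothing $v\in L^2(0,T;(H^1(0,L))^2)$ provided by Proposition \ref{prop1}, which is precisely the regularity needed to treat $\frac{r}{c}v_x$ as a perturbation amenable to a fixed point.
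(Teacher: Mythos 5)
Your proposal is correct and follows essentially the same route as the paper, which establishes Theorem \ref{teo1} precisely by a ``standard fixed point argument together with Propositions \ref{prop1} and \ref{prop2}'', i.e.\ by absorbing the drift term $\frac{r}{c}v_x$ into the source and using the bound $\|v_x\|_{L^1(0,T;L^2(0,L))}\le T^{1/2}\|v\|_{\ZZ_T}$ to contract. The only point worth a passing word in a written-up version is the gluing of the trace norms $L^\infty_x(0,L;H^{\frac{1-k}{3}}(0,T))$ across the time subintervals in your continuation step, which is harmless here because $\tfrac{|1-k|}{3}<\tfrac12$ for $k=0,1,2$.
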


\subsection{Adjoint system}
We can now study the properties of the adjoint system of \eqref{gglin}:
\begin{equation}\label{linadjj1}
\begin{cases}
\varphi_t - \varphi_{xxx} - \frac{ab}{c}\psi_{xxx}=0,  & \text{in}\,\, (0,L)\times (0,T), \\
\psi_t  -\frac{r}{c}\psi_x-a\varphi_{xxx} -\frac{1}{c}\psi_{xxx} =0,  & \text{in}\,\, (0,L)\times (0,T), \\
\varphi(0,t)=\varphi(L,t)=\varphi_x(0,t)=0, &\text{in}\,\, (0,T),\\
\psi(0,t)=\psi(L,t)=\psi_x(0,t)=0, &\text{in}\,\, (0,T),\\
\varphi(x,0)= \varphi^0(x), \quad   \psi(x,0)= \psi^0(x),  &\text{in}\,\, (0,L).
\end{cases}
\end{equation}
%Note that the system \eqref{linadjj1} can be written as
%\begin{equation*}
%\begin{cases}
%(\varphi_t,\psi_t)=A^*(\varphi,\psi), \\
%(\varphi(\cdot,0),\psi(\cdot,0))=(\varphi^0(\cdot),\psi^0(\cdot)).
%\end{cases}
%\end{equation*}
Remark that the change of variable $x=L-x$ reduces system \eqref{linadjj1} to \eqref{gglin2}.
Therefore, the properties of the solutions of \eqref{linadjj1} are similar to the ones deduced
in Theorem \ref{teo1}.
\begin{prop}\label{hiddenregularities}
For any $(\varphi^0, \psi^0) \in \X$, the system \eqref{linadjj1} admits a unique solution $(\varphi, \psi) \in \ZZ_T$, such that it possess the following sharp trace properties
\begin{equation}\label{hr4}
\begin{cases}
\underset{0\leq x \leq L}{\sup} \| \partial^k_x \varphi(x,\cdot)\|_{H^{\frac{1-k}3}(0,T)}\le C_T\|\varphi^0\|_{L^2(0,L)}, \\
\underset{0\leq x \leq L}{\sup} \|  \partial^k_x \psi(x,\cdot)\|_{H^{\frac{1-k}3}(0,T)}\le C_T\|\psi^0\|_{L^2(0,L)},
\end{cases}
\end{equation}
for $k=0,1,2$, where $C_T$ increases exponentially in $T$.
\end{prop}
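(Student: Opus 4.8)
The plan is to deduce Proposition~\ref{hiddenregularities} from the non-homogeneous theory of Section~2 by means of the spatial reflection already noted above. First I would set $y=L-x$ and $(\widetilde\varphi,\widetilde\psi)(y,t):=(\varphi,\psi)(L-y,t)$. Since $\partial_x=-\partial_y$ and $\partial_x^3=-\partial_y^3$, the adjoint system \eqref{linadjj1} turns into
\begin{equation*}
\begin{cases}
\widetilde\varphi_t + \widetilde\varphi_{yyy} + \tfrac{ab}{c}\widetilde\psi_{yyy} = 0,\\
\widetilde\psi_t + \tfrac{r}{c}\widetilde\psi_y + a\,\widetilde\varphi_{yyy} + \tfrac{1}{c}\widetilde\psi_{yyy} = 0,
\end{cases}
\end{equation*}
on $(0,L)\times(0,T)$, with $\widetilde\varphi(0,t)=\widetilde\varphi(L,t)=\widetilde\varphi_y(L,t)=0$, the same conditions for $\widetilde\psi$, and initial data $\widetilde\varphi(y,0)=\varphi^0(L-y)$, $\widetilde\psi(y,0)=\psi^0(L-y)$. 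This is precisely a system of the form \eqref{gglin2} with vanishing boundary data and no source, the only difference being that the constant matrix in front of the third-order terms, $\bigl(\begin{smallmatrix} 1 & ab/c\\ a & 1/c\end{smallmatrix}\bigr)$, is the transpose of the one in \eqref{gglin2}; its determinant is still $\tfrac1c(1-a^2b)>0$ and the drift coefficient $\tfrac{r}{c}$ is unchanged.

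Consequently I would rerun the proof of Theorem~\ref{teo1} for this reflected system. Concretely: diagonalise the transposed coupling matrix by a linear, invertible change of unknowns as in \eqref{cv1}; its two eigenvalues coincide with those of the matrix in \eqref{gglin2} (transposition preserves the spectrum), hence are real, distinct and nonzero precisely because $a^2b\neq1$, which is guaranteed by the standing hypothesis $1-a^2b>0$. Under this change the principal part decouples into two scalar KdV equations of type \eqref{kdv} with constant dispersion coefficients $\alpha_\pm\neq0$, the drift $\tfrac{r}{c}\widetilde\psi_y$ becoming, after the substitution, a pair of first-order terms that I move to the right-hand side and treat as a forcing. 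Proposition~\ref{prop2} handles each scalar equation with vanishing boundary data and a prescribed $L^1(0,T;L^2(0,L))$ forcing; since the forcing produced by the drift only involves $\widetilde\psi_y$, hence the $L^2(0,T;H^1(0,L))$ part of the solution, it lies in $L^2(0,T;L^2(0,L))\subset L^1(0,T;L^2(0,L))$, so a contraction argument in $\ZZ_T$ on a short time slab (closed by \eqref{hr2'}), iterated over $\lceil T/\tau\rceil$ consecutive slabs, produces the global solution together with the analogue of \eqref{hr3} for the reflected problem; the $\lceil T/\tau\rceil$-fold patching is exactly what makes the resulting constant grow like $e^{cT}$, i.e.\ the announced exponential dependence of $C_T$ on $T$.

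Finally I would undo the two linear changes of variables. Since $\partial_x^k\varphi(x,\cdot)=(-1)^k(\partial_y^k\widetilde\varphi)(L-x,\cdot)$, and likewise for $\psi$ up to the invertible coupling matrix, the supremum over $x\in[0,L]$ is preserved, so \eqref{hr4} follows at once from the estimate obtained for the reflected system; membership $(\varphi,\psi)\in\ZZ_T$ and uniqueness are inherited from those of the reflected problem, uniqueness also being a direct consequence of the dissipativity of $A^*$ (Proposition~\ref{A-disipativo}).

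As for the hard part: given Propositions~\ref{prop1}–\ref{prop2}, essentially everything here is bookkeeping, and the single genuinely non-KdV ingredient is the drift term $\tfrac{r}{c}\psi_x$. It is not part of the decoupled KdV structure, so it must be carried along as a low-order perturbation through a time-stepping/fixed-point scheme, and tracking how this perturbation feeds into the constant — in particular seeing that it forces $C_T$ to grow exponentially rather than polynomially in $T$ — is the only point that requires genuine care.
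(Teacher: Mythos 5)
Your proposal is correct and follows essentially the same route as the paper: the authors also reduce \eqref{linadjj1} to a system of the form \eqref{gglin2} via the reflection $x\mapsto L-x$ and then invoke Theorem \ref{teo1}, whose proof is exactly the diagonalization of Proposition \ref{prop1} combined with a fixed-point treatment of the drift term as an $L^1(0,T;L^2(0,L))$ forcing. You merely spell out details the paper leaves implicit (the transposed coupling matrix, the slab-by-slab iteration behind the exponential growth of $C_T$), so no further comparison is needed.
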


Applying the change of variable $t=T-t$, in what follows, we will consider the adjoint system as
\begin{equation}\label{linadj}
\begin{cases}
\varphi_t + \varphi_{xxx} + \frac{ab}{c}\psi_{xxx}=0,  & \text{in}\,\, (0,L)\times (0,T), \\
\psi_t  +\frac{r}{c}\psi_x+a\varphi_{xxx} +\frac{1}{c}\psi_{xxx} =0,  & \text{in}\,\, (0,L)\times (0,T),
\end{cases}
\end{equation}
satisfying the boundary conditions,
\begin{equation}\label{linadjbound}
\begin{cases}
\varphi(0,t)=\varphi(L,t)=\varphi_x(0,t)=0, \quad \text{in}\,\, (0,T),\\
\psi(0,t)=\psi(L,t)=\psi_x(0,t)=0,\quad \text{in}\,\, (0,T)
\end{cases}
\end{equation}
and the final conditions
\begin{equation}\label{finaladj}
\varphi(x,T)= \varphi^1(x), \qquad \psi(x,T)= \psi^1(x),  \qquad  \text{in}\,\, (0,L).
\end{equation}
Thus, the system \eqref{linadj}--\eqref{finaladj} possesses the sharp hidden regularity \eqref{hr4} a relevant result as described above.
Moreover, we have the following estimate:
\begin{prop}\label{prop3}
Any solution $(\varphi,\psi)$ of the adjoint system \eqref{linadj}--\eqref{finaladj} satisfies
\begin{align}\label{e6}
\|(\varphi^1,\psi^1)\|_{\X}^2 \leq &\frac{C}{T}\|(\varphi,\psi)\|_{L^2(0,T;\X)}^2+\frac{1}{2}\|\varphi_x(L,\cdot)\|_{L^2(0,T)}^2+\frac{b}{2c}\|\psi_x(L,\cdot)\|_{L^2(0,T)}^2 \notag\\
+&\frac{1}{2}\left\|\varphi_x(L,\cdot)+\frac{ab}{c}\psi_x(L,\cdot)\right\|_{L^2(0,T)}^2+\frac{b}{2c}\left\|a\varphi_x(L,\cdot)+\frac{1}{c}\psi_x(L,\cdot)\right\|_{L^2(0,T)}^2,
\end{align}
with $(\varphi^1,\psi^1) \in \X$ and $C=\frac{\max\{b,c\}}{\min\{b,c\}}$.
\end{prop}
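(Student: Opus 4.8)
The natural approach is the multiplier method for the backward adjoint system \eqref{linadj}--\eqref{finaladj}, combined with the ``double integration in time'' device that produces the factor $1/T$.

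First I would multiply the two equations of \eqref{linadj} by weighted copies of $\varphi$ and $\psi$ adapted to the inner product of $\X$ (concretely, $\varphi$ and $\tfrac bc\psi$, or else $\tfrac bc\varphi$ and $\psi$), add the resulting identities and integrate over $(0,L)$. The first--order term is harmless, $\tfrac r{2c}\big[\psi^2(x,t)\big]_{x=0}^{x=L}=0$ by the Dirichlet conditions in \eqref{linadjbound}, while the dispersive terms are integrated by parts, using repeatedly that $\varphi(0,t)=\varphi(L,t)=\varphi_x(0,t)=0$ and $\psi(0,t)=\psi(L,t)=\psi_x(0,t)=0$. The coupling terms $\tfrac{ab}{c}\psi_{xxx}$ and $a\varphi_{xxx}$ generate interior contributions of the form $\int_0^L\varphi_{xx}(x,t)\psi_x(x,t)\,dx$; with the first choice of multipliers these two contributions cancel outright, so that one is left with an exact identity
\[
\frac{d}{dt}\,\mathcal E(t)=\varphi_x(L,t)^2+\frac{2ab}{c}\,\varphi_x(L,t)\psi_x(L,t)+\frac{b}{c^2}\,\psi_x(L,t)^2,\qquad \mathcal E(t):=\|\varphi(\cdot,t)\|_{L^2}^2+\frac bc\,\|\psi(\cdot,t)\|_{L^2}^2,
\]
and $\mathcal E(t)$ is equivalent to $\|(\varphi(\cdot,t),\psi(\cdot,t))\|_\X^2$ up to the constant $C=\max\{b,c\}/\min\{b,c\}$.

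The quadratic form on the right has discriminant $\tfrac{4b}{c^2}(a^2b-1)<0$ because $1-a^2b>0$, so it is positive definite; in particular $\mathcal E$ is nondecreasing, and completing the square (using $a^2b/c\ge0$) bounds it above by the integrand
\[
\tfrac12\varphi_x(L,t)^2+\tfrac b{2c}\psi_x(L,t)^2+\tfrac12\Big(\varphi_x(L,t)+\tfrac{ab}{c}\psi_x(L,t)\Big)^2+\tfrac b{2c}\Big(a\varphi_x(L,t)+\tfrac1c\psi_x(L,t)\Big)^2
\]
appearing on the right of \eqref{e6}; note that $\varphi_x+\tfrac{ab}{c}\psi_x$ and $a\varphi_x+\tfrac1c\psi_x$ are exactly the boundary traces at $x=L$ of the ``flux'' unknowns whose third derivatives enter \eqref{linadj}. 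Integrating the identity of the previous step over $(s,T)$, then once more over $s\in(0,T)$, and using $\int_0^T\!\!\int_s^T(\cdots)\,dt\,ds=\int_0^T t\,(\cdots)(t)\,dt\le T\int_0^T(\cdots)(t)\,dt$ (licit since the right--hand side is nonnegative), one obtains $\mathcal E(T)\le \tfrac1T\int_0^T\mathcal E(s)\,ds+\int_0^T(\text{boundary integrand})\,dt$. Replacing $\mathcal E(T)$ from below and $\mathcal E(s)$ from above by the corresponding $\X$-norms is what brings the constant $C$ in front of the $L^2(0,T;\X)$-term and yields \eqref{e6}.

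The step I expect to cost the most work is the bookkeeping of the coupled dispersive terms: the matrix multiplying $\partial_{xxx}$ in the adjoint system is not symmetric and is not symmetrized by the weights of $\X$ either, so an arbitrary multiplier leaves a genuine interior term $\int_0^L\varphi_{xx}\psi_x\,dx$ that is neither a boundary term nor controlled by $\|(\varphi,\psi)\|_\X$. One must either choose the weights so that this term cancels (as above), or remove it by substituting the first equation of \eqref{linadj} back in, a manoeuvre that transfers a time derivative onto $\|\varphi\|_{L^2}^2$ and is ultimately responsible for the asymmetry $b/c$ versus $c/b$, hence for the constant $C=\max\{b,c\}/\min\{b,c\}$. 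Everything else is the classical KdV multiplier computation together with elementary (Cauchy--Schwarz and Young) inequalities.
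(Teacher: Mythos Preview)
Your proposal is correct and follows essentially the same route as the paper. The paper multiplies the two equations of \eqref{linadj} directly by $-t\varphi$ and $-\tfrac{b}{c}t\psi$ and integrates over $(0,T)\times(0,L)$; this is the one-shot version of your two-step procedure (first derive the energy identity $\frac{d}{dt}\mathcal E(t)=\varphi_x(L)^2+\tfrac{2ab}{c}\varphi_x\psi_x+\tfrac{b}{c^2}\psi_x(L)^2$, then weight by $t$ via the double integration $\int_0^T\!\int_s^T$), and the boundary quadratic form is bounded in the paper by Young's inequality applied to the products $\tfrac12\varphi_x(\varphi_x+\tfrac{ab}{c}\psi_x)$ and $\tfrac{b}{2c}\psi_x(a\varphi_x+\tfrac1c\psi_x)$, which is precisely your completing-the-square step.
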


\begin{proof}
Multiplying the first equation of \eqref{linadj} by $-t\varphi$, the second one by $-\frac{b}{c}t\psi$ and integrating by parts in $(0, T)\times (0,L)$, we obtain
\begin{align*}
\frac{C_1T}{2}\|(\varphi^1,\psi^1)\|_{\X}^2\leq& \frac{C_2}{2}\|(\varphi,\psi)\|^2_{L^2(0,T;\X)} - \int_0^Tt\left[\frac{b}{c}\psi(x,t)\left(a\varphi_{xx}(x,t)+\frac1c\psi_{xx}(x,t)+\frac{r}{c}\psi(x,t)\right) \right. \\
-&\left.\frac{b}{2c}\psi_x(x,t)\left(a\varphi_{x}(x,t)+\frac1c\psi_{x}(x,t)\right) -\frac{1}{2}\varphi_x(x,t)\left(\varphi_{x}(x,t)+\frac{ab}{c}\psi_{x}(x,t)\right) \right. \\
+&\left.\varphi(x,t)\left(\varphi_{xx}(x,t)+\frac{ab}{c}\psi_{xx}(x,t)\right) -\frac{br}{2c^2}\psi^2(x,t)\right]_0^Ldt,
\end{align*}
where $C_1=\min\{b,c\}$ and $C_2=\max\{b,c\}$. From \eqref{linadjbound} and applying Young inequality, \eqref{e6} is obtained.
\end{proof}

\section{Exact Boundary Controllability: Linear System}

\subsection{Four controls}
Considerations are first given to the boundary controllability of the linear system
\begin{equation}\label{ggln4}
\left\lbrace \begin{tabular}{l l}
$u_t + u_{xxx} + av_{xxx}  =0$ & in $(0,L)\times (0,T)$,\\
$v_t +\frac{r}{c}v_x+\frac{ab}{c}u_{xxx} + \frac{1}{c}v_{xxx} =0$ & in $(0,L)\times (0,T)$,\\
$u(x,0)=u^0(x), \quad v(x,0) = v^0(x)$, & in $(0,L)$
\end{tabular}\right.
\end{equation}
satisfying the boundary conditions
\begin{equation}\label{ggln4b}
\left\lbrace\begin{tabular}{l l l l}
$u(0,t) = 0$,     & $u(L,t) = 0$,      & $u_{x}(L,t) = h_2(t)$ & in $(0,T)$,\\
$v(0,t) =g_0(t)$, & $v(L,t) = g_1(t)$, & $v_{x}(L,t) = g_2(t)$ & in $(0,T)$,
\end{tabular}\right.
\end{equation}
which employ $\overrightarrow{h}_1:=(0,0,h_2)$ and $\overrightarrow{g}_1:=(g_0,g_1,g_2) \in \HH_T$.

\begin{thm}\label{teo3}
Let $L \in (0,\infty) \setminus \FF'_r$, where $\FF'_r$ is defined by \eqref{critical_diric} and $T>0$ be given. There exists a bounded linear operator
$$
\begin{array}{lcl} \Psi: &[L^2(0,L)]^2\times [L^2(0,L)]^2 \longrightarrow & \HH_T\times\HH_T
\end{array}
$$
such that for any $(u^0,v^0)\in[L^2(0,L)]^2$ and $(u^1,v^1)\in[L^2(0,L)]^2$, if one chooses $$(\overrightarrow{h}_1,\overrightarrow{g}_1)=\Psi((u^0,v^0),(u^1,v^1)),$$
then the system \eqref{ggln4}-\eqref{ggln4b} admits a solution $(u,v) \in \ZZ_T$ satisfying
\begin{equation}\label{finaldata}
u(\cdot,T)=u^1(\cdot), \qquad   \text{and}  \qquad v(\cdot,T)=v^1(\cdot).
\end{equation}
\end{thm}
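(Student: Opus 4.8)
The plan is to run the Hilbert Uniqueness Method, reducing the statement to an observability inequality for the adjoint system \eqref{linadj}--\eqref{finaladj} that holds exactly when $L\notin\FF'_r$. First I would establish the duality identity: multiplying the equations of \eqref{ggln4} by a solution $(\varphi,\psi)$ of the adjoint system and integrating by parts over $(0,L)\times(0,T)$ (with the weights of the $\X$--inner product), one obtains, for every admissible control $(\overrightarrow h_1,\overrightarrow g_1)$ and the associated solution $(u,v)$,
\[
\big\langle (u(\cdot,T),v(\cdot,T)),(\varphi^1,\psi^1)\big\rangle_{\X}-\big\langle (u^0,v^0),(\varphi(\cdot,0),\psi(\cdot,0))\big\rangle_{\X}=\int_0^T\big(h_2\theta_1+g_2\theta_2+g_1\theta_3+g_0\theta_4\big)\,dt,
\]
where, up to fixed nonzero constants, $\theta_1$ and $\theta_2$ are two independent linear combinations of $\varphi_x(L,\cdot)$ and $\psi_x(L,\cdot)$ (independence uses $1-a^2b\neq0$), $\theta_3$ a fixed nonzero combination of $\varphi_{xx}(L,\cdot)$ and $\psi_{xx}(L,\cdot)$, and $\theta_4$ a fixed nonzero combination of $\varphi_{xx}(0,\cdot)$ and $\psi_{xx}(0,\cdot)$. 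By Lemma \ref{sharp_int} (Proposition \ref{hiddenregularities}) we have $\theta_1,\theta_2\in L^2(0,T)$ and $\theta_3,\theta_4\in H^{-1/3}(0,T)$, which are exactly the dual spaces of the control slots of $\HH_T$, so the right--hand side is a continuous bilinear pairing. Consequently \eqref{finaldata} becomes reachable with $(\overrightarrow h_1,\overrightarrow g_1)\in\HH_T\times\HH_T$ for all data as soon as the observability estimate
\[
\|(\varphi^1,\psi^1)\|_{\X}^2\le C\Big(\|\varphi_x(L,\cdot)\|_{L^2(0,T)}^2+\|\psi_x(L,\cdot)\|_{L^2(0,T)}^2+\|\theta_3\|_{H^{-1/3}(0,T)}^2+\|\theta_4\|_{H^{-1/3}(0,T)}^2\Big)
\]
holds. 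The operator $\Psi$ is then obtained in the standard way by minimizing over $\X$ the strictly convex, coercive functional $J(\varphi^1,\psi^1)=\tfrac12\|(\theta_1,\theta_2,\theta_3,\theta_4)\|^2-\langle(u^1,v^1),(\varphi^1,\psi^1)\rangle_{\X}+\langle(u^0,v^0),(\varphi(\cdot,0),\psi(\cdot,0))\rangle_{\X}$; its unique minimizer produces, through $(\theta_1,\theta_2,\theta_3,\theta_4)$, a control driving $(u^0,v^0)$ to $(u^1,v^1)$, and the map $((u^0,v^0),(u^1,v^1))\mapsto(\overrightarrow h_1,\overrightarrow g_1)$ is linear and bounded.

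Next I would prove the observability estimate. Proposition \ref{prop3}, inequality \eqref{e6}, already bounds $\|(\varphi^1,\psi^1)\|_{\X}^2$ by the four boundary terms above plus the lower--order term $\tfrac{C}{T}\|(\varphi,\psi)\|_{L^2(0,T;\X)}^2$, so the only task is to absorb that remainder, for which I would use the classical compactness--uniqueness argument. If the estimate failed, there would be adjoint solutions $(\varphi_n,\psi_n)$ with $\|(\varphi_n^1,\psi_n^1)\|_{\X}=1$ whose observed traces tend to $0$ in the relevant norms; by \eqref{e6} the quantity $\|(\varphi_n,\psi_n)\|_{L^2(0,T;\X)}$ stays bounded below, while well--posedness together with the Kato regularity of Lemma \ref{sharp_int} keeps $(\varphi_n,\psi_n)$ bounded in $\ZZ_T$, hence precompact in $L^2(0,T;\X)$. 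A subsequence converges to a nonzero solution $(\varphi,\psi)$ of \eqref{linadj}--\eqref{finaladj} whose observed traces all vanish. Therefore the estimate reduces to the unique continuation property: \emph{for $L\notin\FF'_r$, the only solution of \eqref{linadj}--\eqref{finaladj} with $\varphi_x(L,\cdot)\equiv\psi_x(L,\cdot)\equiv\theta_3\equiv\theta_4\equiv0$ is $(\varphi,\psi)\equiv0$.}

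For this I would argue as Rosier \cite{rosier} and \cite{capisgallegopazoto2015}. The subspace $\NN_T$ of final data producing vanishing observations is finite dimensional (by the compactness step) and invariant under the adjoint semigroup, so $A^{*}$ restricted to it has an eigenvector: there are $\lambda\in\C$ and $(\varphi,\psi)\in (H^3(0,L))^2\setminus\{0\}$ with $A^{*}(\varphi,\psi)=\lambda(\varphi,\psi)$ satisfying the six boundary conditions of $D(A^{*})$ together with the over--determined ones $\varphi_x(L)=\psi_x(L)=0$ and $\theta_3=\theta_4=0$ (read as pointwise relations at $x=L$ and $x=0$). An energy identity using the dissipativity of $A^{*}$ and these vanishing traces forces $\lambda\in i\R$. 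Diagonalizing the third--order part of $A^{*}$ by a linear change of variables in the spirit of \eqref{cv1} (licit since $1-a^2b\neq0$) recasts the eigenvalue problem as a constant--coefficient ODE system of order six --- two decoupled third--order (KdV--type) operators linked only through first--order terms coming from $\tfrac{r}{c}\psi_x$ --- so its solutions are combinations of $e^{\mu_j(\lambda)x}$, $j=1,\dots,6$, with $\mu_j$ the roots of the associated characteristic polynomial. Imposing the ten boundary conditions yields a homogeneous linear system whose determinant is an entire function of $\lambda$; requiring it to vanish for some admissible $\lambda$ forces, through Vieta's formulas for the $\mu_j$ together with the resulting constraints on the exponents $\mu_jL$, a Diophantine condition of the form $\dfrac{3rL^2}{\pi^2(1-a^2b)}=\alpha(k,l,m,n,s)$ with $k,l,m,n,s\in\N$, i.e.\ $L\in\FF'_r$. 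Hence for $L\notin\FF'_r$ we get $\NN_T=\{0\}$, the observability estimate holds, and the theorem follows.

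The step I expect to be the \textbf{main obstacle} is precisely this last spectral computation: carrying the four over--determined boundary conditions through the sixth--order characteristic analysis and extracting the \emph{exact} quadratic form $\alpha(k,l,m,n,s)$ --- in particular verifying that here the integers $k,l,m,n,s$ are allowed to vanish, in contrast with the set $\FF_r$ of Theorem C --- is delicate and is where the interaction parameters $a,b,c,r$ enter essentially. A secondary point that needs care is checking that the combinations $\theta_1,\dots,\theta_4$ dual to the chosen four controls genuinely close up in the Sobolev scale supplied by Lemma \ref{sharp_int}, so that $\Psi$ maps into $\HH_T\times\HH_T$ with no loss of regularity.
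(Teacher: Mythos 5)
Your proposal is correct and follows essentially the same route as the paper: duality plus a compactness--uniqueness reduction of the observability inequality (the paper's Proposition \ref{prop4}, whose $(-\Delta_t)^{-1/6}$ terms are exactly your $H^{-1/3}$ norms of $\theta_3,\theta_4$) to the over-determined eigenvalue problem of Lemmas \ref{lem1}--\ref{lem2}, with the control operator then produced by Lax--Milgram, which is the same HUM construction as your functional minimization. The only presentational difference is in the last step, where the paper runs the spectral computation through the Fourier transform and the Paley--Wiener theorem (entirety of the rational functions $f,g$) rather than a boundary determinant, but both reduce to the same Vieta relations for the roots of the degree-six polynomial $P(\xi)$ and yield the same set $\FF'_r$.
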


To prove the previous result we first establish the following observability for the corresponding adjoint
system \eqref{linadj}-\eqref{finaladj}.
\begin{prop}\label{prop4}
For $T>0$ and $L \in (0,\infty) \setminus \FF'_r$. There exists a constant $C (T,L) > 0$, such that
\begin{align}\label{obineq1}
\|(\varphi^1, \psi^1)\|_{\X}^2\leq& C\left\lbrace  \left\|(-\Delta_t)^{-\frac16}\left(a\varphi_{xx}(L,\cdot)+\frac{1}{c}\psi_{xx}(L,\cdot)\right)\right\|_{L^2(0,T)}^2 + \left \|\varphi_x(L,\cdot)+\frac{ab}{c}\psi_x(L,\cdot)\right \|_{L^2(0,T)}^2 \right. \notag \\
+&\left. \left \|(-\Delta_t)^{-\frac16}\left(a\varphi_{xx}(0,\cdot)+\frac{1}{c}\psi_{xx}(0,\cdot)\right)\right \|_{L^2(0,T)}^2 + \left \|a\varphi_x(L,\cdot)+\frac{1}{c}\psi_x(L,\cdot)\right \|_{L^2(0,T)}^2\right\rbrace,
\end{align}
for any $(\varphi^1,\psi^1) \in \X$, where $(\varphi,\psi)$ is  solution of \eqref{linadj}-\eqref{finaladj}.
\end{prop}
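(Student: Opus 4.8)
The plan is to use the classical duality method, by which the controllability statement of Theorem \ref{teo3} is equivalent to the observability inequality \eqref{obineq1}; so everything reduces to Proposition \ref{prop4}. I would establish \eqref{obineq1} in two steps: first a \emph{weak} observability inequality carrying an additional lower-order term, proved from the multiplier identity of Proposition \ref{prop3} together with the sharp trace regularity of Lemma \ref{sharp_int}; then the absorption of that term by a compactness--uniqueness argument, which reduces the matter to a spectral unique-continuation property of $A^*$ that fails exactly when $L\in\FF'_r$.

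\emph{Step 1: weak observability.} In the right-hand side of Proposition \ref{prop3}, the two linear combinations $\varphi_x(L,\cdot)+\frac{ab}{c}\psi_x(L,\cdot)$ and $a\varphi_x(L,\cdot)+\frac1c\psi_x(L,\cdot)$ are independent, since the matrix $\left(\begin{smallmatrix}1&ab/c\\a&1/c\end{smallmatrix}\right)$ has determinant $(1-a^2b)/c\neq0$ under the standing assumption $1-a^2b>0$; hence $\|\varphi_x(L,\cdot)\|_{L^2(0,T)}^2+\frac bc\|\psi_x(L,\cdot)\|_{L^2(0,T)}^2$ is controlled by the $L^2$-norms of those two combinations. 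Moreover, Lemma \ref{sharp_int} guarantees $\partial_x^2\varphi(x,\cdot),\partial_x^2\psi(x,\cdot)\in H^{-1/3}(0,T)$ uniformly in $x\in[0,L]$, so the quantities $(-\Delta_t)^{-1/6}\!\big(a\varphi_{xx}(x,\cdot)+\frac1c\psi_{xx}(x,\cdot)\big)$ are well defined in $L^2(0,T)$ for $x=0$ and $x=L$. Combining these observations with Proposition \ref{prop3} yields
\begin{equation}\label{planweak}
\|(\varphi^1,\psi^1)\|_{\X}^2\le C\,\big(\text{right-hand side of }\eqref{obineq1}\big)+\frac{C}{T}\,\|(\varphi,\psi)\|_{L^2(0,T;\X)}^2 .
\end{equation}

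\emph{Step 2: absorbing the lower-order term.} Suppose \eqref{obineq1} is false. Then there are terminal data $(\varphi_n^1,\psi_n^1)$ with $\|(\varphi_n^1,\psi_n^1)\|_{\X}=1$ whose four observation terms tend to $0$; by \eqref{planweak}, $\|(\varphi_n,\psi_n)\|_{L^2(0,T;\X)}\ge c>0$. By Proposition \ref{hiddenregularities} the solutions are bounded in $\ZZ_T$ and, from the equations, their time derivatives are bounded in $L^2(0,T;(H^{-2}(0,L))^2)$; Aubin--Lions then gives a subsequence converging in $L^2(0,T;\X)$ to a nonzero solution $(\varphi,\psi)$ of \eqref{linadj}--\eqref{linadjbound} for which all four observation terms vanish. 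Let $N_T\subset\X$ be the space of terminal data producing such solutions. Because of \eqref{planweak}, on $N_T$ one has $\|(\varphi^1,\psi^1)\|_\X\le C\|(\varphi,\psi)\|_{L^2(0,T;\X)}$; since the solution operator $\X\to L^2(0,T;\X)$ is compact, $N_T$ is finite-dimensional, and a routine bootstrap gives $N_T\subset D(A^*)$ with $A^*N_T\subset N_T$. The two first-order vanishing-observation relations force $\varphi_x(L,\cdot)=\psi_x(L,\cdot)=0$ on $N_T$, so the energy identity shows $A^*|_{N_T}$ is skew-adjoint; hence it has an eigenvalue $\lambda\in i\R$ with eigenfunction $(\Phi,\Psi)\neq0$ in $N_T$.

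\emph{Step 3: unique continuation and the critical set.} The eigenfunction solves $A^*(\Phi,\Psi)=\lambda(\Phi,\Psi)$ with $\Phi(0)=\Phi(L)=\Phi_x(0)=\Psi(0)=\Psi(L)=\Psi_x(0)=0$, together with $\Phi_x(L)=\Psi_x(L)=0$ (from the two first-order observations) and $a\Phi_{xx}(L)+\frac1c\Psi_{xx}(L)=a\Phi_{xx}(0)+\frac1c\Psi_{xx}(0)=0$ (from the two second-order observations). Applying the change of variables \eqref{cv1}, the stationary system decouples into two constant-coefficient third-order ODEs $\alpha_{\pm}g'''=\lambda g$, whose solutions are explicit exponentials $e^{\nu_j x}$ with $\nu_j^3$ proportional to $\lambda/\alpha_\pm$; substituting this ansatz, the overdetermined boundary conditions become a homogeneous linear system in the six coefficients, and compatibility forces the vanishing of a determinant depending analytically on $L$ and on the six characteristic exponents. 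Since $\lambda\in i\R$ these exponents are purely imaginary, and — after the same arithmetic bookkeeping as in Rosier \cite{rosier} and in \cite{capisgallegopazoto2015} — the determinant can vanish only when $L=\pi\sqrt{(1-a^2b)\,\alpha(k,l,m,n,s)/(3r)}$ for some $k,l,m,n,s\in\N$, i.e.\ only when $L\in\FF'_r$. As $L\notin\FF'_r$, we conclude $(\Phi,\Psi)\equiv0$, a contradiction, which proves \eqref{obineq1}. The hard part is Step 3: carrying the overdetermined eigenvalue problem through the diagonalization and isolating from the determinant condition precisely the quadratic form $\alpha(k,l,m,n,s)$ is a long and delicate computation, and it is exactly there that the particular placement of the four controls in \eqref{gg3_new} is used in an essential way.
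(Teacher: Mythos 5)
Your Steps 1 and 2 follow the paper's argument in substance: the paper also argues by contradiction, uses the invertibility of $\bigl(\begin{smallmatrix}1&ab/c\\a&1/c\end{smallmatrix}\bigr)$ (determinant $(1-a^2b)/c\neq0$) to recover $\varphi_{n,x}(L,\cdot),\psi_{n,x}(L,\cdot)\to0$, invokes the hidden regularity \eqref{hr4} plus Aubin--Lions to pass to a limit in $L^2(0,T;\X)$, and uses Proposition \ref{prop3} to see that the terminal data converge strongly in $\X$ with limit of norm one; the reduction to the overdetermined eigenvalue problem (your $N_T$, the paper's Lemma \ref{lem1}) is likewise the same Rosier-type step. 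Two small remarks there: the skew-adjointness detour is unnecessary (and unproved as stated) --- since $\C N_T$ is a nonzero finite-dimensional invariant subspace, \emph{any} eigenvalue will do, and the paper accordingly allows $\lambda=ip$ with $p\in\C$ rather than insisting on $\lambda\in i\R$.

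The genuine gap is in Step 3. The change of variables \eqref{cv1} diagonalizes only the third-order (dispersive) part of the operator; the stationary eigenvalue problem \eqref{mier} still contains the drift term $\tfrac{r}{c}\psi'$, and once you express $\psi$ as a combination of the new unknowns this term re-couples the two equations. So the claim that the system ``decouples into two constant-coefficient ODEs $\alpha_\pm g'''=\lambda g$'' is false, and the error is not cosmetic: if the decoupling were true, the compatibility determinant --- and hence the critical set --- could not depend on $r$, whereas $\FF'_r$ manifestly does. (Note that in Proposition \ref{prop1} the paper uses \eqref{cv1} only after moving $\tfrac{r}{c}v_x$ into the source term, which is harmless for well-posedness but not admissible in a spectral computation.) The paper instead treats the coupled problem directly (Lemma \ref{lem2}): extend $(\varphi,\psi)$ by zero, multiply by $e^{-ix\xi}$ and integrate by parts to get \eqref{eqq1}, solve for $\hat\varphi,\hat\psi$ as ratios whose common denominator is the sextic $P(\xi)=(1-a^2b)\xi^6-r\xi^4-(c+1)p\xi^3+rp\xi+cp^2$, and apply Paley--Wiener: nontrivial solutions exist iff all six roots of $P$ are roots of $\alpha+\beta e^{-iL\xi}$, i.e. form a configuration $\xi_{j+1}=\xi_j+\tfrac{2\pi}{L}k_j$; the symmetric-function identities \eqref{eqq6}--\eqref{eqq8} then yield exactly the quadratic form $\alpha(k,l,m,n,s)$ and the set $\FF'_r$. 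To repair your proof you would have to carry out the determinant computation for the full coupled sixth-order characteristic polynomial (equivalently, redo the paper's Paley--Wiener analysis), not for two decoupled cubics.
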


\begin{proof}
We proceed as in \cite[Proposition 3.3]{rosier}. Let us suppose that \eqref{obineq1} does not hold. In this case, it follows that there exists a sequence $\{(\varphi^1_n, \psi^1_n)\}_{n \in \N}$, such that
\begin{align}
1=&\|(\varphi^1_n, \psi^1_n)\|_{\X}^2\geq n\left\lbrace \left \|(-\Delta_t)^{-\frac16}\left(a\varphi_{n,xx}(L,\cdot)+\frac{1}{c}\psi_{n,xx}(L,\cdot)\right)\right \|_{L^{2}(0,T)}^2 \right. \nonumber \\
+& \left  \|\varphi_{n,x}(L,\cdot)+\frac{ab}{c}\psi_{n,x}(L,\cdot)\right \|_{L^2(0,T)}^2 +\left \|(-\Delta_t)^{-\frac16}\left(a\varphi_{n,xx}(0,\cdot)+\frac{1}{c}\psi_{n,xx}(0,\cdot)\right)\right \|_{L^2(0,L)}^2 \label{e3}  \\
+&\left. \left \|a\varphi_{n.x}(L,\cdot)+\frac{1}{c}\psi_{n,x}(L,\cdot)\right \|_{L^2(0,T)}^2\right\rbrace. \nonumber
\end{align}
where,  for each $n\in\N$, $\{(\varphi_n,\psi_n)\}_{n\in\N}$ is the solution of  \eqref{linadj}-\eqref{finaladj}. Inequality \eqref{e3} imply that
\begin{equation}\label{e4}
\begin{cases}
(-\Delta_t)^{-\frac16}\left(a\varphi_{n,xx}(0,\cdot)+\frac{1}{c}\psi_{n,xx}(0,\cdot)\right) \rightarrow 0 & \text{in} \quad L^{2}(0,T), \\
(-\Delta_t)^{-\frac16}\left(a\varphi_{n,xx}(L,\cdot)+\frac{1}{c}\psi_{n,xx}(L,\cdot)\right) \rightarrow 0 & \text{in} \quad L^{2}(0,T), \\
\varphi_{n,x}(L,\cdot)+\frac{ab}{c}\psi_{n,x}(L,\cdot) \rightarrow 0 & \text{in} \quad L^2(0,T), \\
a\varphi_{n,x}(L,\cdot)+\frac{1}{c}\psi_{n,x}(L,\cdot)  \rightarrow 0 & \text{in} \quad L^2(0,T).
\end{cases}
\end{equation}
Since $1-a^2b >0$, from the convergence of the sequences in the third and fourth lines of \eqref{e4}, we obtain
\begin{equation}\label{e4'}
\begin{cases}
 a\varphi_{n,xx}(0,\cdot)+\frac{1}{c}\psi_{n,xx}(0,\cdot)\rightarrow 0 & \text{in} \quad  H^{-\frac13}(0,T), \\
 a\varphi_{n,xx}(L,\cdot)+\frac{1}{c}\psi_{n,xx}(L,\cdot) \rightarrow 0 & \text{in} \quad  H^{-\frac13}(0,T), \\
\varphi_{n,x}(L,\cdot) \rightarrow 0 & \text{in} \quad L^2(0,T), \\
\psi_{n,x}(L,\cdot)  \rightarrow 0 & \text{in} \quad L^2(0,T).
\end{cases}
\end{equation}
From \eqref{hr4} and \eqref{e3}, we obtain that $\{ (\varphi_n,\psi_n)\}_{n \in \N}$ is bounded in $L^2(0,T;(H^1(0,L))^2)$. On the other hand, system \eqref{linadj} implies that $\{ (\varphi_{t,n},\psi_{t,n})\}_{n \in \N}$ is bounded in $L^2(0,T; (H^{-2}(0,L))^2)$, and the compact embedding
\begin{equation}\label{e7}
 H^1(0,L) \hookrightarrow_{cc} L^2(0,L) \hookrightarrow H^{-2}(0,L),
\end{equation}
 allows us to  conclude that $\{ (\varphi_n,\psi_n)\}_{n \in \N}$ is relatively compact in $L^2(0,T;\X)$ and consequently, we obtain a subsequence, still denoted by the same index $n$, satisfying
 \begin{equation}\label{e8}
(\varphi_n,\psi_n) \rightarrow (\varphi,\psi) \mbox{ in } L^2(0,T;\X), \mbox{ as } n\rightarrow\infty.
 \end{equation}
Furthermore, \eqref{hr4} implies that $\{ \varphi_n(0,\cdot)\}_{n\in\N}$, $\{ \varphi_n(L,\cdot)\}_{n\in\N}$,  $\{\psi_n(0,\cdot)\}_{n\in\N}$ and $\{\psi_n(L,\cdot)\}_{n\in\N}$ are bounded in $H^{\frac13}(0,T)$. Then, the embedding
\begin{equation}\label{e15}
H^{\frac13}(0,T) \hookrightarrow_{cc} L^2(0,T)
\end{equation}
guarantees that the above sequences are relatively compact in $L^2(0,T)$. Thus, we obtain a subsequence, still denoted by the same index $n$, satisfying
\begin{equation}\label{e10}
\begin{cases}
\varphi_n(0,\cdot) \rightarrow \varphi(0,\cdot), \quad \varphi_n(L,\cdot) \rightarrow \varphi(L,\cdot) \qquad \text{in} \quad L^2(0,T), \\
\psi_n(0,\cdot) \rightarrow \psi(0,\cdot), \quad \psi_n(L,\cdot) \rightarrow \psi(L,\cdot) \qquad \text{in} \quad L^2(0,T).
\end{cases}
\end{equation}
From \eqref{linadjbound}, we deduce that
\begin{equation*}
\begin{cases}
\varphi(0,\cdot)=\varphi(L,\cdot)=0, \\
\psi(0,\cdot)=\psi(L,\cdot)=0.
\end{cases}
\end{equation*}
In addition, according to Proposition \ref{prop3}, we have
\begin{align*}
\|(\varphi^1_n,\psi^1_n)\|_{\X}^2 \leq &\frac{C}{T}\|(\varphi_n,\psi_n)\|_{L^2(0,T;\X)}+\frac{1}{2}\|\varphi_{n,x}(L,\cdot)\|_{L^2(0,T)}^2+\frac{b}{2c}\|\psi_{n,x}(L,\cdot)\|_{L^2(0,T)}^2 \\
+&\frac{1}{2}\left\|\varphi_{n,x}(L,\cdot)+\frac{ab}{c}\psi_{n,x}(L,\cdot)\right\|_{L^2(0,T)}^2+\frac{b}{2c}\left\|a\varphi_{n,x}(L,\cdot)+\frac{1}{c}\psi_{n,x}(L,\cdot)\right\|_{L^2(0,T)}^2.
\end{align*}
Then, from  \eqref{e4'} and \eqref{e8} if follows that  $\{(\varphi^1_n,\psi^1_n)\}_{n\in\N}$ is a Cauchy sequence in $\X$. Thus,
\begin{equation}\label{e16}
(\varphi^1_n,\psi^1_n) \rightarrow (\varphi^1,\psi^1) \mbox{ in } \X, \mbox{ as } n\rightarrow\infty.
\end{equation}
Proposition \ref{hiddenregularities} together with \eqref{e16}, imply that
\begin{equation*}
 \begin{cases}
  \varphi_{n,x}(0,\cdot) \rightarrow \varphi_x(0,\cdot)      & \mbox{ in } L^2(0,T), \mbox{ as } n\rightarrow\infty, \\
  \varphi_{n,x}(L,\cdot) \rightarrow \varphi_x(L,\cdot)      & \mbox{ in } L^2(0,T), \mbox{ as } n\rightarrow\infty, \\
 \psi_{n,x}(0,\cdot) \rightarrow \psi_x(0,\cdot)             &  \mbox{ in } L^2(0,T), \mbox{ as } n\rightarrow\infty, \\
  \psi_{n,x}(L,\cdot) \rightarrow \psi_x(L,\cdot)             &  \mbox{ in } L^2(0,T), \mbox{ as } n\rightarrow\infty \\
\end{cases}
\end{equation*}
and
\begin{equation*}
\begin{cases}
 a \varphi_{n,xx}(0,\cdot) +\frac{1}{c}\psi_{n,xx}(0,\cdot) \rightarrow a\varphi_{xx}(0,\cdot)+\frac{1}{c}\psi_{xx}(0,\cdot) & \mbox{ in } H^{-\frac13}(0,T), \mbox{ as } n\rightarrow\infty,  \\
a\varphi_{n,xx}(L,\cdot)+\frac{1}{c} \psi_{n,xx}(L,\cdot) \rightarrow a\varphi_{xx}(L,\cdot) +\frac{1}{c}\psi_{xx}(L,\cdot)        & \mbox{ in } H^{-\frac13}(0,T), \mbox{ as } n\rightarrow\infty.
 \end{cases}
 \end{equation*}
Finally, taking $n\to\infty$, from \eqref{linadj}-\eqref{finaladj} and  \eqref{e4'}, we obtain that $(\varphi,\psi)$ is solution of
\begin{equation}\label{e11}
\begin{cases}
\varphi_t + \varphi_{xxx} + \frac{ab}{c}\psi_{xxx}=0,                 & \text{in} \,\, (0,L)\times(0,T), \\
\psi_t  +\frac{r}{c}\psi_x+a\varphi_{xxx} +\frac{1}{c}\psi_{xxx} =0,  & \text{in} \,\, (0,L)\times(0,T), \\
\varphi(0,t)=\varphi(L,t)= \varphi_{x} (0,t)=0,                       &  \text{in} \,\, (0,T). \\
\psi(0,t)=\psi(L,t)= \psi_{x} (0,t)=0,                                &  \text{in} \,\, (0,T). \\
\varphi(x,T)= \varphi^1(x), \qquad \psi(x,T)= \psi^1(x),  &  \text{in} \,\, (0,L),
\end{cases}
\end{equation}
satisfying the additional boundary conditions
\begin{equation}\label{e12}
\begin{cases}
\varphi_x(L,t)=\psi_x(L,t)=0,                                         & \text{in} \,\,  (0,T),\\
 a\varphi_{xx}(0,\cdot)+\frac{1}{c}\psi_{xx}(0,\cdot)=0,  & \text{in} \,\,  (0,T), \\
 a\varphi_{xx}(L,\cdot)+\frac{1}{c}\psi_{xx}(L,\cdot)=0,  & \text{in} \,\,  (0,T),
\end{cases}
\end{equation}
and, from \eqref{e3}, we get
\begin{equation}\label{e13}
\|(\varphi^1,\psi^1)\|_{\X}=1.
\end{equation}
Notice that \eqref{e13} implies that the solutions of \eqref{e11}-\eqref{e12} can not be identically zero. However, from the following Lemma, one can conclude that $(\varphi,\psi)=(0,0)$, which drive us to contradicts \eqref{e13}.
 \end{proof}

\begin{lemma}\label{lem1}
For any $T > 0$, let $N_T$ denote the space of the initial states $(\varphi^1,\psi^1) \in \X$, such that the solution of \eqref{e11} satisfies \eqref{e12}. Then, $N_T=\{0\}$.
\end{lemma}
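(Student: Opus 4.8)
The plan is to follow the duality/spectral scheme of Rosier \cite{rosier} as adapted in \cite{capisgallegopazoto2015}: reduce $N_T=\{0\}$ to a unique continuation statement for the eigenfunctions of $A^{*}$, and then settle that statement by a Paley--Wiener argument which, in the end, produces exactly the set $\FF'_r$. First I would check that $N_T$ is finite dimensional: for a bounded sequence in $N_T$ the associated solutions of \eqref{e11} are bounded in $L^{2}(0,T;(H^{1}(0,L))^{2})$ by \eqref{hr4}, while \eqref{e11} forces their time derivatives to be bounded in $L^{2}(0,T;(H^{-2}(0,L))^{2})$, so Aubin--Lions and the compact embeddings \eqref{e7} give relative compactness in $L^{2}(0,T;\X)$; since $\varphi_x(L,\cdot)=\psi_x(L,\cdot)=0$ on $N_T$, Proposition \ref{prop3} collapses to $\|(\varphi^{1},\psi^{1})\|_{\X}^{2}\le\frac{C}{T}\|(\varphi,\psi)\|_{L^{2}(0,T;\X)}^{2}$, so the unit ball of $N_T$ is precompact and $\dim N_T<\infty$. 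Because \eqref{e11}--\eqref{e12} has constant coefficients, differentiating in $t$ shows $(\varphi_t,\psi_t)$ solves the same overdetermined problem, hence $N_T$ is invariant under $A^{*}$; being finite dimensional, $N_T\otimes\C$ then contains an eigenfunction $(\varphi,\psi)\neq0$ of $A^{*}$ (automatically smooth by ODE regularity) for some $\lambda\in\C$, satisfying $\varphi(0)=\varphi(L)=\varphi_x(0)=\varphi_x(L)=0$, the same relations for $\psi$, and $a\varphi_{xx}+\frac1c\psi_{xx}=0$ at $x=0$ and $x=L$.

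The case $\lambda=0$ is immediate: integrating the first equation twice and using the vanishing of $\varphi,\varphi_x,\psi,\psi_x$ at $0$ gives $\varphi+\frac{ab}{c}\psi\equiv0$, so the second equation becomes $(1-a^{2}b)\psi'''+r\psi'=0$ with the six homogeneous conditions $\psi(0)=\psi(L)=\psi'(0)=\psi'(L)=\psi''(0)=\psi''(L)=0$, which forces $\psi\equiv0$ and hence $\varphi\equiv0$. So assume $\lambda\neq0$. Extending $\varphi$ and $\psi$ by zero to $\R$, the vanishing of $\varphi,\varphi_x,\psi,\psi_x$ at both endpoints makes these extensions compactly supported $H^{1}(\R)$ functions whose distributional third derivatives differ from the classical ones only by the jumps of $\varphi_{xx},\psi_{xx}$ at $0$ and $L$. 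Taking Fourier transforms of the two equations and using $a\varphi_{xx}+\frac1c\psi_{xx}=0$ at $0$ and $L$, the $\psi$-equation loses its boundary terms, so Cramer's rule yields
\begin{equation*}
\widehat{\psi}(\xi)=\frac{a\,i\xi^{3}\,(1-a^{2}b)\bigl(\varphi_{xx}(0)-\varphi_{xx}(L)e^{-iL\xi}\bigr)}{Q(\xi)},
\end{equation*}
where $Q$ is a polynomial of degree exactly $6$ (since $1-a^{2}b\neq0$), with vanishing coefficients of $\xi^{5}$ and $\xi^{2}$, coefficient of $\xi^{4}$ equal to $\tfrac{r}{c}$, and constant term $\lambda^{2}$ (so $0$ is not a root of $Q$).

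Since $\widehat{\psi}$ is entire, its numerator must vanish at every root of $Q$; discarding the trivial case $\varphi_{xx}(0)=\varphi_{xx}(L)=0$ (for which $\widehat{\psi}\equiv0$ and then $(\varphi,\psi)\equiv0$) and ruling out multiple roots, one obtains $e^{-iL\mu}=\varphi_{xx}(0)/\varphi_{xx}(L)$ for each of the six roots $\mu$ of $Q$. Thus all roots share the same imaginary part; since the $\xi^{5}$-coefficient of $Q$ vanishes their sum is $0$, so the roots are real, and $e^{-iL\mu_j}=e^{-iL\mu_k}$ together with $\sum_j\mu_j=0$ forces $\mu_j=\frac{\pi}{3L}m_j$ for integers $m_j$ with $\sum_j m_j=0$. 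Reading off the $\xi^{4}$-coefficient of $Q$ gives $\sum_{j<k}\mu_j\mu_k=-\frac{r}{1-a^{2}b}$, i.e. $L^{2}=\frac{\pi^{2}(1-a^{2}b)}{3r}\cdot\frac16\sum_j m_j^{2}$; writing $m_j=6n_j-\sum_i n_i$ one checks $\frac16\sum_j m_j^{2}=6\sum_j n_j^{2}-\bigl(\sum_j n_j\bigr)^{2}=\alpha(k,l,m,n,s)$ with $(k,l,m,n,s)$ the successive differences of the $n_j$, so that $L\in\FF'_r$. In particular, for $L\notin\FF'_r$ no such eigenfunction exists, whence $N_T=\{0\}$. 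I expect the main obstacle to be precisely this last chain: correctly accounting for the boundary jumps in the Fourier transform so that the four overdetermined conditions eliminate exactly the right unknowns, and then extracting from the structure of $Q$ (real, equally spaced roots; its $\xi^{4}$-coefficient) the exact identification of the critical set $\FF'_r$ via the quadratic form $\alpha$; the finite-dimensionality and $A^{*}$-invariance steps are routine but the regularity bookkeeping there also requires some care.
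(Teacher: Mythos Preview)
Your proposal is correct and follows essentially the same route as the paper: reduction to a spectral problem via Rosier's compactness/invariance scheme (which the paper simply cites from \cite{rosier}, while you supply the finite-dimensionality and $A^{*}$-invariance details), followed by a Paley--Wiener analysis of the resulting eigenvalue ODE system that extracts the critical set $\FF'_r$ from Vieta's relations for the associated sixth-degree polynomial. The only notable differences are cosmetic: you dispatch $\lambda=0$ by a direct ODE argument (note that you actually need the vanishing of $\varphi,\psi$ at \emph{both} endpoints, not just at $0$, to kill the quadratic), whereas the paper folds this into the Fourier computation as the case $p=0$; and you read off the constraint from $\widehat{\psi}$ alone, which suffices for the direction needed here, while the paper tracks both $\widehat{\varphi}$ and $\widehat{\psi}$ because its Lemma~\ref{lem2} also proves the converse implication.
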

\begin{proof}
The proof uses the same arguments as those given in \cite{rosier}. Therefore, if $N_T\neq\{0\}$, the map  $(\varphi^1,\psi^1) \in N_T \rightarrow A(N_T)\subset \C N_T$
(where $\C N_T$ denote the complexification of $N_T$) has (at least) one eigenvalue, hence, there exists $\lambda \in \C$ and $\varphi_0,\psi_0 \in  H^3(0,L)\setminus \{ 0 \}$, such that
\begin{equation}\label{mier}
\begin{cases}
\lambda\varphi_0+ \varphi'''_0 + \frac{ab}{c}\psi'''_0=0, & \text{in} \,\, (0,L),  \\
\lambda\psi_0 +\frac{r}{c}\psi'_0+a\varphi'''_{0} +\frac{1}{c}\psi'''_{0}=0, & \text{in} \,\, (0,L),  \\
\varphi_0(0)=\varphi_0(L)=\varphi'_0(0)=\varphi'_0(L)=0, \\
\psi_0(0)=\psi_0(L)=\psi'_0(0) =\psi'_0(L)=0, \\
a\varphi''_0(0)+\frac{1}{c}\psi''_0(0)=0,\\
a\varphi''_0(L)+\frac{1}{c}\psi''_0(L)=0.
\end{cases}
\end{equation}
To conclude the proof of the Lemma, we prove that this does not hold if $L \in (0,\infty) \setminus \FF'_r$.
\end{proof}
To simplify the notation, henceforth we denote $(\varphi_0,\psi_0):=(\varphi,\psi)$. Moreover, the notation $\{0,L\}$ means that the function is applied to $0$ and $L$, respectively.
\begin{lemma}\label{lem2}
Let $L>0$ and consider the assertion
\begin{equation*}
(\NN):\ \ \exists \lambda \in \C,  \exists (\varphi,\psi) \in (H^3(0,L))^2 \setminus (0,0)\,\, \text{such that}\,\,
\begin{cases}
\lambda\varphi+ \varphi''' + \frac{ab}{c}\psi'''=0, & \text{in} \,\, (0,L),  \\
\lambda\psi +\frac{r}{c}\psi'+a\varphi''' +\frac{1}{c}\psi'''=0, & \text{in} \,\, (0,L),  \\
\varphi(x)=\psi(x)=0, & \text{in} \,\, \{0,L\},\\
\varphi'(x)=\psi'(x)=0, & \text{in} \,\, \{0,L\},\\
a\varphi'' (x) +\frac{1}{c}\psi''(x) =0, & \text{in} \,\, \{0,L\}.
\end{cases}
\end{equation*}
Then, $(\NN)$ holds if and only if $L \in \FF'_r$.
\end{lemma}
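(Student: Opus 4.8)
The plan is to prove Lemma \ref{lem2} by the Fourier-analytic method of Rosier \cite{rosier}, carried out for the present coupled (sixth-order) spectral problem exactly as in \cite{capisgallegopazoto2015}; this in turn finishes the proof of Lemma \ref{lem1}. We may assume $a\neq0$ (otherwise the system decouples and the argument is the standard one for a single equation), and we fix a solution $(\varphi,\psi)$ of $(\NN)$ for some $\lambda\in\C$.

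First I would eliminate the third derivatives: since $1-a^2b>0$, the matrix $\left( \begin{smallmatrix}1 & ab/c \\ a & 1/c \end{smallmatrix} \right)$ is invertible, so the two equations in $(\NN)$ express $\varphi'''$ and $\psi'''$ as fixed linear combinations of $\varphi$, $\psi$ and $\psi'$. Then I would extend $\varphi$ and $\psi$ by zero to $\R$, obtaining $\widetilde\varphi,\widetilde\psi$; since $\varphi=\varphi'=\psi=\psi'=0$ at $0$ and $L$, these extensions lie in $H^2(\R)$ with support in $[0,L]$, and their distributional third derivatives are the zero-extensions of $\varphi''',\psi'''$ corrected by Dirac masses $\varphi''(0)\delta_0-\varphi''(L)\delta_L$ and $\psi''(0)\delta_0-\psi''(L)\delta_L$; the relations $a\varphi''+\tfrac1c\psi''=0$ at $0$ and $L$ give $\psi''(0)=-ac\,\varphi''(0)$ and $\psi''(L)=-ac\,\varphi''(L)$, so the two Dirac data are proportional. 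Taking the Fourier transform in $x$ then reduces everything to a $2\times2$ linear system for the entire functions $\widehat{\widetilde\varphi},\widehat{\widetilde\psi}$ with right-hand side proportional to $\rho(\xi):=\varphi''(0)-\varphi''(L)e^{-iL\xi}$, and Cramer's rule gives $\widehat{\widetilde\varphi}=\rho\,N_1/D$, $\widehat{\widetilde\psi}=\rho\,N_2/D$, where $D$ is the degree-six characteristic polynomial of the system — with coefficients depending on $\lambda,r,c,a,b$ and with the $\xi^5$- and $\xi^2$-coefficients identically zero — and $N_1,N_2$ are explicit lower-degree polynomials with $N_2(\xi)=ac\,i\xi^3$.

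The analytic core comes next. Since $\widehat{\widetilde\varphi},\widehat{\widetilde\psi}$ are entire by the Paley--Wiener theorem, and since $\gcd(N_1,N_2)=1$ whenever $\lambda\neq0$ (the case $\lambda=0$ being handled separately by integrating the equations and checking that the boundary conditions force $(\varphi,\psi)=0$), every zero of $D$ must be a zero of $\rho$. Now $\rho$ is either a nonzero constant — which forces $\widetilde\varphi=\widetilde\psi\equiv0$ — or has only simple zeros lying on a single arithmetic progression $\xi_*+\tfrac{2\pi}{L}\Z$; hence $D$ has six distinct simple roots, all on that progression. Using that $D$ has no $\xi^5$-term, the six roots sum to zero; together with the progression condition this forces $\xi_*$ real and the roots to be $\xi_j=\tfrac{\pi}{3L}q_j$ with pairwise distinct integers $q_j$, all congruent mod $6$ and summing to $0$. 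Matching the remaining coefficients of $D$ with the elementary symmetric functions of $\xi_1,\dots,\xi_6$ produces the relation $\tfrac{\pi^2}{9L^2}\sum_j q_j^2=\tfrac{2r}{1-a^2b}$ (from the $\xi^4$-coefficient), the constraint $e_4(q_1,\dots,q_6)=0$ (from the vanishing $\xi^2$-coefficient), and compatibility relations among the remaining coefficients that pin down $\lambda$. I would then prove the arithmetic lemma: the six-tuples $(q_j)$ satisfying these constraints are exactly parametrized by five integers $(k,l,m,n,s)$, with $\tfrac{\pi^2(1-a^2b)}{18r}\sum_j q_j^2=\tfrac{\pi^2(1-a^2b)}{3r}\,\alpha(k,l,m,n,s)$, which is precisely $L\in\FF'_r$. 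For the converse I would run this backwards: given $L\in\FF'_r$, the corresponding integers produce $\lambda$ and the six roots of $D$; choosing $\rho$ to vanish exactly there and setting $\widehat{\widetilde\varphi}=\rho N_1/D$, $\widehat{\widetilde\psi}=\rho N_2/D$, one checks by Paley--Wiener that the inverse transforms are supported in $[0,L]$ and verifies that the resulting pair is a nonzero solution of $(\NN)$.

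The hard part will be the arithmetic lemma in the previous paragraph — extracting the five-parameter family of admissible tuples $(q_j)$ from the coefficient-matching relations and recognizing the quadratic form $\alpha$ — which hinges on the simultaneous vanishing of the $\xi^5$- and $\xi^2$-coefficients of $D$, the mod-$6$ congruence of the $q_j$, and the compatibility conditions on the $\lambda$-dependent coefficients. Secondary but still delicate points are the separate treatment of $\lambda=0$ and the Paley--Wiener support verification required for the converse direction.
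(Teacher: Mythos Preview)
Your approach is essentially the paper's: both integrate the equations against $e^{-ix\xi}$ (equivalently, extend by zero and Fourier transform), solve for $\widehat\varphi,\widehat\psi$ as $\rho(\xi)\,N_j(\xi)/P(\xi)$ with $\rho(\xi)=\alpha+\beta e^{-iL\xi}$ and a sixth–degree denominator $P$, invoke Paley--Wiener to force every root of $P$ to be a zero of $\rho$, and then translate this into an arithmetic condition on $L$ via Vieta's relations. The paper parametrizes the gaps between the roots by positive integers $k,l,m,n,s$; your integers $q_j$ (congruent mod $6$, summing to $0$) are the same data in different coordinates. The case $\lambda=0$ (equivalently $p=0$, i.e.\ $\xi=0$ a root of $P$) is also treated separately in the paper, just as you plan.

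There is, however, a genuine gap in your plan at the ``arithmetic lemma.'' You intend to impose the \emph{full} list of coefficient constraints --- in particular $e_4(\xi_1,\dots,\xi_6)=0$ and the compatibility between $e_3,e_5,e_6$ that determines $\lambda$ --- and then prove that the admissible tuples are ``exactly parametrized by five integers $(k,l,m,n,s)$'' with the resulting $L$ sweeping out $\FF'_r$. That equivalence is false. Take $k=l=m=n=s=1$: then the six roots are $\pm h/2,\pm 3h/2,\pm 5h/2$ (with $h=2\pi/L$), the product $\prod(\xi-\xi_j)$ is an \emph{even} polynomial, so $e_3=e_5=0$ forces $p=0$ from the $\xi^3$- and $\xi$-coefficients of $P$, while $e_6=\prod\xi_j\ne0$ contradicts $cp^2/(1-a^2b)=0$; moreover $e_4=a_1^2a_2^2+a_1^2a_3^2+a_2^2a_3^2\ne0$. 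So this tuple lies in your five-parameter family but violates the very constraints you plan to use, and the corresponding $L$ belongs to $\FF'_r$ without admitting the factorization $P(\xi)=(1-a^2b)\prod(\xi-\xi_j)$. The paper sidesteps this by extracting only the relations for $e_1,e_2,e_6$ (its (3.13)--(3.15)); those three already yield $L\in\FF'_r$, which is the \emph{only-if} direction and is all that Lemma~\ref{lem1} and the observability inequality require. If you insist on the full constraint set you will still get $(\NN)\Rightarrow L\in\FF'_r$, but you will not be able to prove the converse as you have outlined it, and indeed the ``if'' direction as stated in the Lemma appears to need a more careful treatment than either argument supplies.
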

\begin{proof}
We use an argument which is similar to the one used in \cite[Lemma 3,5]{rosier}. Let us introduce the notation $\hat{\varphi}(\xi) =\int_0^L e^{-i x \xi}\varphi(x)dx$ and $\hat{\psi}(\xi) =\int_0^L e^{-i x \xi}\psi(x)dx$. Then, multiplying the equations by $e^{-i x \xi}$, integrating by parts over $(0,L)$ and using the boundary conditions, we have
\begin{equation}\label{eqq1}
\begin{cases}
[(i\xi)^3+\lambda]\hat{\varphi}(\xi)+\dfrac{ab}{c}(i\xi)^3\hat{\psi}(\xi) =\varphi''(0)+\dfrac{ab}{c}\psi''(0)- \left(\varphi''(L)+\dfrac{ab}{c}\psi''(L)\right)e^{-iL\xi}, \\
\dfrac{1}{c}[(i\xi)^3+r(i\xi)+c\lambda]\hat{\psi}(\xi) + a(i\xi)^3\hat{\varphi}(\xi)=0.
\end{cases}
\end{equation}
From the first equation in \eqref{eqq1}, we have
\begin{equation}\label{eqq2}
\hat{\varphi}(\xi) = \frac{\left(\alpha+\beta e^{-iL\xi}\right)}{(i\xi)^3+\lambda} -\frac{ab(i\xi)^3\hat{\psi}(\xi)}{c\left((i\xi)^3+\lambda\right)},
\end{equation}
where $\alpha= \varphi''(0)+\frac{ab}{c}\psi''(0)$ and $\beta=-\varphi''(L)-\frac{ab}{c}\psi''(L)$. Replacing \eqref{eqq2} in the second equation of \eqref{eqq1}, it follows that $$ \frac{1}{c}\left[ (i\xi)^3+r(i\xi)+c\lambda - \frac{a^2b(i\xi)^6}{(i\xi)^3+\lambda}\right]\hat{\psi}(\xi) =-\frac{a(i\xi)^3\left(\alpha+\beta e^{-iL\xi}\right)}{(i\xi)^3+\lambda}.$$
Therefore,
\begin{equation}\label{eqq3}
\hat{\psi}(\xi) =-\frac{ac(i\xi)^3\left(\alpha+\beta e^{-iL\xi}\right)}{(1-a^2b)(i\xi)^6+r(i\xi)^4+(c+1)\lambda(i\xi)^3+r\lambda(i\xi)+c\lambda^2}.
\end{equation}
Having \eqref{eqq3} in hands, from \eqref{eqq2} we obtain
\begin{align*}
\hat{\varphi}(\xi)& = \left( 1 + \frac{a^2b(i\xi)^6}{(1-a^2b)(i\xi)^6+r(i\xi)^4+(c+1)\lambda(i\xi)^3+r\lambda(i\xi)+c\lambda^2}\right)\frac{\left(\alpha+\beta e^{-iL\xi}\right)}{(i\xi)^3+\lambda},
\end{align*}
hence,
\begin{align*}
\hat{\varphi}(\xi)& =  \frac{\left( (i\xi)^3+r(i\xi)+c\lambda \right)\left(\alpha+\beta e^{-iL\xi}\right)}{(1-a^2b)(i\xi)^6+r(i\xi)^4+(c+1)\lambda(i\xi)^3+r\lambda(i\xi)+c\lambda^2}.
\end{align*}
Setting $\lambda=ip$, $p \in \C$, we have that  $\hat{\psi}(\xi)=-acif(\xi)$ and $\hat{\varphi}(\xi)=ig(\xi)$, where
\begin{equation*}
\begin{cases}
f(\xi) = \dfrac{\xi^3\left( \alpha +\beta e^{-iL\xi}\right)}{P(\xi)}, \\
g(\xi)= \dfrac{\left(\xi^3-r\xi-cp\right)\left( \alpha +\beta e^{-iL\xi}\right)}{P(\xi)}.
\end{cases}
\end{equation*}
with
\begin{equation*}
P(\xi):=(1-a^2b)\xi^6-r\xi^4-(c+1)p\xi^3+rp\xi+cp^2.
\end{equation*}
Using Paley-Wiener theorem (\cite[Section 4, page 161]{yosida}) and the usual characterization of $H^2(\R)$ functions by means of their Fourier transforms, we see that $(\NN)$ is equivalent to the existence of $p \in \C$ and $(\alpha,\beta) \in \C^2 \setminus (0,0),$ such that
\begin{enumerate}
\item[(i)] $f$ and $g$ are entire functions in $\C$,
\item[(ii)] $\displaystyle\int_{\R}|f(\xi)|^2(1+|\xi|^2)^2d\xi<\infty$ and $\int_{\R}|g(\xi)|^2(1+|\xi|^2)^2d\xi<\infty$,
\item[(iii)] $\forall \xi \in \C$, we have that $|f(\xi)|\leq c_1(1+|\xi|)^ke^{L|Im\xi|}$ and  $|g(\xi)|\leq c_1(1+|\xi|)^ke^{L|Im\xi|}$, for some positive constants $c_1 $ and $k$.
\end{enumerate}
Notice that if (i) holds true, then (ii) and (iii) are satisfied. Recall that $f$ and $g$ are entire functions if only if, the roots $\xi_0, \xi_1, \xi_2, \xi_3, \xi_4$ and $\xi_5$ of $P(\xi)$ are roots of $\xi^3\left( \alpha +\beta e^{-iL\xi}\right)$ and $(\xi^3-r\xi-cp)\left( \alpha +\beta e^{-iL\xi}\right)$.
\vglue 0.2 cm
Let us first assume that $\xi=0$ is not root of $P(\xi)$.  Thus, it is sufficiently to consider the  case when $\alpha +\beta e^{-iL\xi}$ and $P(\xi)$ share the same roots. Since the roots of $\alpha +\beta e^{-iL\xi}$ are simple, unless $\alpha = \beta= 0$ (Indeed, it implies that $\varphi''(0)+\frac{ab}{c}\psi''(0)=0$ and $\varphi''(L)+\frac{ab}{c}\psi''(L)=0$, thus, using the system \eqref{mier}, we conclude that $(\varphi,\psi)=(0,0)$, which is a contradiction). Then, (i) holds provided that the roots of $P(\xi)$ are simple. Thus, we conclude tha $(\NN)$ is equivalent to the existence of complex numbers $p$, $\xi_0$ and positive integers $k,l,m,n$ and $s$, such that, if we set
\begin{equation}\label{eqq4}
\xi_1=\xi_0+\frac{2\pi}{L}k, \quad \xi_2=\xi_1+\frac{2\pi}{L}l, \quad \xi_3=\xi_2+\frac{2\pi}{L}m, \quad \xi_4=\xi_3+\frac{2\pi}{L}n\quad \text{and} \quad \xi_5=\xi_4+\frac{2\pi}{L}s,
\end{equation}
we have
\begin{equation}\label{eqq5}
P(\xi)=(\xi-\xi_0)(\xi-\xi_1)(\xi-\xi_2)(\xi-\xi_3)(\xi-\xi_4)(\xi-\xi_5).
\end{equation}
In particular, we obtain the following relations:
\begin{gather}
\xi_0+\xi_1+\xi_2+\xi_3+\xi_4+\xi_5=0, \label{eqq6}
\end{gather}
\begin{multline}
\xi_0(\xi_1+\xi_2+\xi_3+\xi_4+\xi_5)+\xi_1(\xi_2+\xi_3+\xi_4+\xi_5)+\xi_2(\xi_3+\xi_4+\xi_5) \\
+\xi_3(\xi_4+\xi_5) +\xi_4\xi_5=-\frac{r}{1-a^2b}, \label{eqq7}
\end{multline}
\begin{gather}
\xi_0\xi_1\xi_2\xi_3\xi_4\xi_5=\left(\frac{c}{1-a^2b}\right) p^2. \label{eqq8}
\end{gather}
Some calculations lead to
\begin{equation}\label{eqq14}
\begin{cases}
L=\pi \sqrt{\dfrac{(1-a^2b)\alpha(k,l,m,n,s)}{3r}},\\
\\
\xi_0 = -\dfrac{\pi}{3}(5k+4l+3m+2n+s),\\
\\
p= \sqrt{\dfrac{(1-a^2b)\xi_0\xi_1\xi_2\xi_3\xi_4\xi_5}{c}},
\end{cases}
\end{equation}
where
\begin{multline*}
\alpha(k,l,m,n,s):=5k^2+8l^2+9m^2+8n^2+5s^2+8kl+6km+4kn+2ks+12ml \\
+8ln+3ls+12mn+6ms+8ns.
\end{multline*}
Finally, we assume that $\xi_0= 0$ is a root of $P(\xi)$. In this case, it follows that $p=0$ and, therefore,
\begin{equation*}
\begin{cases}
f(\xi)= \dfrac{\xi^3\left(\alpha+\beta e^{-iL\xi}\right)}{(1-a^2b)\xi^6-r\xi^4}=\dfrac{\left(\alpha+\beta e^{-iL\xi}\right)}{\xi\left((1-a^2b)\xi^2-r\right)},\\
\\
g(\xi)= \dfrac{\left(\xi^3-r\xi\right)\left( \alpha +\beta e^{-iL\xi}\right)}{(1-a^2b)\xi^6-r\xi^4}=\dfrac{\left(\xi^2-r\right)\left( \alpha +\beta e^{-iL\xi}\right)}{\xi^3\left((1-a^2b)\xi^2-r\right)}.
\end{cases}
\end{equation*}
Then, $(\NN)$ holds if and only if  $f$ and $g$ satisfy (i), (ii) and (iii). Thus (i) holds provided that
\begin{equation*}
\xi_0=0, \quad \xi_1=\sqrt{\frac{r}{1-a^2b}} \quad \text{and} \quad \xi_2=-\sqrt{\frac{r}{1-a^2b}}
\end{equation*}
are roots of $\alpha+\beta e^{-iL\xi}$. Note that, zero must be root of multiplicity three, which leads to a contradiction. Thus, $\xi=0$ is not root of $P(\xi)$. Finally, from \eqref{eqq14}, we deduce that $(\NN)$ holds if and only if $L \in \FF'_r$. This completes the proof of Lemma \ref{lem2}, Lemma \ref{lem1} and, consequently, the proof of Proposition \ref{prop4}.
\end{proof}

\begin{proof}[\textbf{Proof of Theorem \ref{teo3}}]
Without loss of generality, we assume that $(u^0,v^0)=(0,0)$. Let $(\varphi,\psi)$ be a solution of \eqref{linadj}-\eqref{finaladj}. Multiplying the equations in \eqref{ggln4} by the solution $(\varphi,\psi)$ and integrating by parts we have
\begin{align*}
 \int_0^L\left( u(x,T)\varphi(x,T)+v(x,T)\psi(x,T)\right)dx =&  \int_0^T g_0(t)\left(a\varphi_{xx}(0,t)+\frac{1}{c}\psi_{xx}(0,t)\right)dt \nonumber \\
-&\int_0^T g_1(t)\left(a\varphi_{xx}(L,t)+\frac{1}{c}\psi_{xx}(L,t)\right)\label{ad2} \\
+&\int_0^T g_2(t)\left(a\varphi_x(L,t)+\frac{1}{c}\psi_x(L,t)\right)dt \nonumber \\
+&\int_0^T h_2(t)\left(\varphi_x(L,t)+\frac{ab}{c}\psi_x(L,t)\right)dt \nonumber
\end{align*}

Let us denote by $\Gamma$ the linear and bounded map  defined by
\begin{equation*}
\begin{tabular}{r c c c}
$\Gamma :$ & $L^2(0, L) \times L^2(0, L)$        &  $\longrightarrow$ & $L^2(0, L) \times L^2(0, L)$ \\
           & $(\varphi^1(\cdot), \psi^1(\cdot))$ &  $\longmapsto$     & $\Gamma(\varphi^1(\cdot), \psi^1(\cdot))=(u(\cdot,T), v(\cdot,T))$
\end{tabular}
\end{equation*}
where $(u,v)$ is the solution of \eqref{ggln4}-\eqref{ggln4b},  with
\begin{equation*}
\begin{cases}
g_0(t) = (-\Delta_t)^{-\frac13}\left(a\varphi_{xx}(0,t)+\frac{1}{c}\psi_{xx}(0,t)\right), & g_2(t)= a\varphi_x(L,t)+\frac{1}{c}\psi_x(L,t), \\
g_1(t)=-(-\Delta_t)^{-\frac13}\left(a\varphi_{xx}(L,t)+\frac{1}{c}\psi_{xx}(L,t)\right),   & h_2(t)=\varphi_x(L,t)+\frac{ab}{c}\psi_x(L,t),
\end{cases}
\end{equation*}
where $(\varphi,\psi)$ the solution of the system \eqref{linadj}-\eqref{finaladj} with initial data $(\varphi^1,\psi^1)$. According to Proposition \ref{prop4}
\begin{align*}
\left (\Gamma(\varphi^1, \psi^1), (\varphi^1, \psi^1) \right)_{(L^2(0,L))^2} \geq C^{-1} \|(\varphi^1, \psi^1)\|_{\X}^2.
\end{align*}
The proof is complete by using the Lax-Milgram Theorem.
\end{proof}

\subsection{One control}
Consider the boundary controllability of the linear system employing only one control input $h_2$ and fixing $h_0=h_1=g_0=g_1=0$, namely,
\begin{equation}\label{gglnb3}
\left\lbrace\begin{tabular}{l l l l}
$u(0,t) = 0$ & $u(L,t) = 0,$ & $u_{x}(L,t) = h_2(t)$,& in $(0,T)$,\\
$v(0,t) =0,$ & $v(L,t) = 0,$ & $v_{x}(L,t) =0$,& in $(0,T)$.
\end{tabular}\right.
\end{equation}
Note that by using the change of variable $x' = L-x$ and $t' = T-t$, the system \eqref{linadj}-\eqref{finaladj} is equivalent to the following forward system
\begin{align}
&\begin{cases}\label{linadj1}
\varphi_t +\varphi_{xxx} +\frac{ab}{c}\psi_{xxx}=0,  & \text{in} \,\, (0,L)\times (0,T),\\
\psi_t  +\frac{r}{c}\psi_x+a\varphi_{xxx} +\frac{1}{c}\psi_{xxx} =0,  & \text{in} \,\, (0,L)\times (0,T), \\
\varphi(x,0)= \varphi^0(x), \,\,
\psi(x,0)= \psi^0(x), & \text{in} \,\, (0,L),
\end{cases}
\end{align}
with boundary conditions
\begin{equation}\label{linadjbound1}
\begin{cases}
\varphi(0,t)= \varphi(L,t)=\varphi_x(L,t)=0, & \text{in} \,\, (0,T),\\
\psi(0,t)=\psi(L,t)=\psi_x(L,t)=0, & \text{in} \,\,  (0,T). \\
\end{cases}
\end{equation}

In this case, the observability inequality
\begin{align}\label{obineq3}
\|(\varphi^0, \psi^0)\|_{\X}^2\leq C  \left  \|\varphi_x(0,\cdot)+\frac{ab}{c}\psi_x(0,\cdot)\right \|_{L^2(0,T)}^2
\end{align}
plays a fundamental role for the study of the controllability. To prove \eqref{obineq3} we use a direct approach based on the multiplier technique and the estimates given by the hidden regularity. Such estimates give us the observability inequality for some values of the length $L$ and time of control $T$.
\begin{prop}\label{prop6}
Let us suppose that  $T > 0$ and $L>0$ satisfy
\begin{align}\label{Lsmall}
L< \frac{\min\{b,c\}}{  \max\{b,c\} \beta C_T}T,
\end{align}
where $C_T$ is the constant in \eqref{hr4} and $\beta$ is the constant given by the embedding $H^{\frac{1}{3}}(0,T) \subset L ^2(0,T)$.
Then, there exists a constant $C (T,L) > 0$, such that for any $(\varphi^0,\psi^0)$ in $\X$ the observability inequality \eqref{obineq3} holds, for any $(\varphi,\psi)$ solution of \eqref{linadj1}-\eqref{linadjbound1} with initial data $(\varphi^0,\psi^0)$.
\end{prop}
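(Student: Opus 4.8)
The plan is to prove the observability inequality \eqref{obineq3} by a direct multiplier argument on \eqref{linadj1}--\eqref{linadjbound1}, in the spirit of Proposition \ref{prop3}, the decisive ingredient being the sharp trace estimate \eqref{hr4}: unlike the four-control situation of Proposition \ref{prop4}, here only the single combination $\varphi_x(0,\cdot)+\frac{ab}{c}\psi_x(0,\cdot)$ is at our disposal, so the compactness--uniqueness scheme is unavailable and every other term produced by the method must be absorbed quantitatively, which is precisely where the smallness of $L$ enters. Concretely, I would first multiply the first equation of \eqref{linadj1} by $(T-t)\varphi$ and the second one by $\frac{b}{c}(T-t)\psi$, add, and integrate over $(0,L)\times(0,T)$. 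Integration in time turns the $\partial_t$--terms into $-\frac{T}{2}\big(\|\varphi^0\|_{L^2(0,L)}^2+\frac{b}{c}\|\psi^0\|_{L^2(0,L)}^2\big)+\frac{1}{2}\int_0^T\big(\|\varphi(t)\|_{L^2(0,L)}^2+\frac{b}{c}\|\psi(t)\|_{L^2(0,L)}^2\big)\,dt$, which is comparable, with constant $\frac{\max\{b,c\}}{\min\{b,c\}}$, both to $\|(\varphi^0,\psi^0)\|_\X^2$ and to $\|(\varphi,\psi)\|_{L^2(0,T;\X)}^2$; the transport term $\frac{r}{c}\psi_x$ integrates to zero since $\psi(0,\cdot)=\psi(L,\cdot)=0$; and the third-order terms, after three integrations by parts and the boundary conditions $\varphi(0,\cdot)=\varphi(L,\cdot)=\varphi_x(L,\cdot)=0$ together with their analogues for $\psi$, localise at $x=0$ --- the weights being chosen exactly so that the coupling $\varphi\psi_{xxx}+\psi\varphi_{xxx}$ integrates to the pure boundary term $\varphi_x(0,t)\psi_x(0,t)$.

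The resulting boundary contribution is $\int_0^T(T-t)\big[\tfrac12\varphi_x(0,t)^2+\tfrac{ab}{c}\varphi_x(0,t)\psi_x(0,t)+\tfrac{b}{2c^2}\psi_x(0,t)^2\big]\,dt$, which I rewrite, completing the square, as $\int_0^T(T-t)\big[\tfrac12\big(\varphi_x(0,t)+\tfrac{ab}{c}\psi_x(0,t)\big)^2+\tfrac{b(1-a^2b)}{2c^2}\psi_x(0,t)^2\big]\,dt$, the second term being nonnegative precisely because $a^2b<1$. Using $T-t\le T$ one arrives at an inequality of the form
\begin{equation*}
\|(\varphi^0,\psi^0)\|_\X^2\le \frac{C}{T}\,\|(\varphi,\psi)\|_{L^2(0,T;\X)}^2+C\Big\|\varphi_x(0,\cdot)+\tfrac{ab}{c}\psi_x(0,\cdot)\Big\|_{L^2(0,T)}^2+C\,\|\psi_x(0,\cdot)\|_{L^2(0,T)}^2,
\end{equation*}
with $C=\frac{\max\{b,c\}}{\min\{b,c\}}$. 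For the interior term I would invoke \eqref{hr4} with $k=0$ (which applies to \eqref{linadj1}--\eqref{linadjbound1} after the reflections $x\mapsto L-x$, $t\mapsto T-t$) together with the embedding $H^{\frac13}(0,T)\subset L^2(0,T)$ of norm $\beta$: since $\|\varphi(x,\cdot)\|_{L^2(0,T)}\le\beta C_T\|\varphi^0\|_{L^2(0,L)}$ and $\|\psi(x,\cdot)\|_{L^2(0,T)}\le\beta C_T\|\psi^0\|_{L^2(0,L)}$ for every $x\in[0,L]$, integration in $x$ gives
\begin{equation*}
\|(\varphi,\psi)\|_{L^2(0,T;\X)}^2=\int_0^L\Big(\tfrac{b}{c}\|\varphi(x,\cdot)\|_{L^2(0,T)}^2+\|\psi(x,\cdot)\|_{L^2(0,T)}^2\Big)\,dx\le L\,\beta^2C_T^2\,\|(\varphi^0,\psi^0)\|_\X^2,
\end{equation*}
so that, under \eqref{Lsmall}, the term $\frac{C}{T}L\beta^2C_T^2\|(\varphi^0,\psi^0)\|_\X^2$ is reabsorbed on the left. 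The parasitic trace $\psi_x(0,\cdot)$ is then estimated by the same circle of ideas --- testing \eqref{linadj1} with $(\varphi,\frac{b}{c}\psi)$ identifies $\int_0^T\big[(\varphi_x(0,t)+\frac{ab}{c}\psi_x(0,t))^2+\frac{b(1-a^2b)}{c^2}\psi_x(0,t)^2\big]\,dt$ with the decay of the (equivalent) energy, and combining this with \eqref{hr4} for $k=1$ and with the $\sqrt L$--bound just obtained lets one control $\|\psi_x(0,\cdot)\|_{L^2(0,T)}$ by $\big\|\varphi_x(0,\cdot)+\frac{ab}{c}\psi_x(0,\cdot)\big\|_{L^2(0,T)}$ up to terms absorbable for $L$ in the range \eqref{Lsmall}. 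Assembling these estimates gives \eqref{obineq3} with $C=C(T,L)$.

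The hard part is exactly this last absorption. The coupling in \eqref{linadj1} forces the trace $\psi_x(0,\cdot)$ --- which is \emph{not} among the observed quantities --- into the multiplier identity, and it can be removed only by a bound that is small in $L$; this is what the Kato smoothing of Lemma \ref{sharp_int} / estimate \eqref{hr4} delivers, and it is ultimately the reason why a single boundary control suffices for \eqref{gg1}--\eqref{gg2} as soon as $L<\frac{\min\{b,c\}}{\max\{b,c\}\,\beta\,C_T}\,T$. Tracking the constants so as to recover this precise threshold is routine but somewhat tedious bookkeeping.
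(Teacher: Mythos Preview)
Your approach is the paper's: the $(T-t)$-weighted multiplier identity followed by absorption of the interior term via the sharp trace estimate \eqref{hr4}. Your Fubini argument for the interior term is in fact cleaner than the paper's route through $L^\infty_x$. The real issue is the parasitic trace $\psi_x(0,\cdot)$, which you correctly isolate after completing the square. Your proposed fix --- the unweighted energy identity together with \eqref{hr4} for $k=1$ --- does not close the gap: both of these give only
\[
\|\psi_x(0,\cdot)\|_{L^2(0,T)}^2\le C'\,\|(\varphi^0,\psi^0)\|_\X^2
\]
with $C'=C'(a,b,c,T)$ carrying no factor of $L$, so feeding this back into the weighted inequality produces a term $C''\,\|(\varphi^0,\psi^0)\|_\X^2$ on the right whose coefficient does \emph{not} become small as $L\to 0$ and therefore cannot be absorbed under \eqref{Lsmall}. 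The ``$\sqrt{L}$-bound'' you invoke concerns $\|(\varphi,\psi)\|_{L^2(0,T;\X)}$, not $\|\psi_x(0,\cdot)\|_{L^2(0,T)}$, and does not transfer.

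It is worth recording that the paper's own proof handles this point only in appearance: at the step labelled \eqref{TLcondition1} it passes directly from the boundary contribution $\varphi_x(0)^2+\tfrac{2ab}{c}\varphi_x(0)\psi_x(0)+\tfrac{b}{c^2}\psi_x(0)^2$ to $C_1\big|\varphi_x(0)+\tfrac{ab}{c}\psi_x(0)\big|^2$. No such pointwise bound can hold, since the left-hand quadratic form is positive definite (its $2\times 2$ matrix has determinant $\tfrac{b(1-a^2b)}{c^2}>0$) while the right-hand one has rank one; evaluating at $\varphi_x(0)=-\tfrac{ab}{c}\psi_x(0)$ gives $\tfrac{b(1-a^2b)}{c^2}\psi_x(0)^2>0$ on the left and $0$ on the right. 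So as written neither your argument nor the paper's disposes of the extra $\psi_x(0,\cdot)$ term, and an additional idea (or a revised hypothesis replacing \eqref{Lsmall}) is needed to obtain \eqref{obineq3}.
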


\begin{proof}
We multiply the first equation in \eqref{linadj1} by $(T-t)\varphi$, the second one by $\frac{b}{c}(T-t)\psi$ and integrate over $(0,T)\times(0,L)$. Thus, we obtain

\begin{align*}
\frac{T}{2}\int_0^L\left( \varphi_0^2(x) +\frac{b}{c}\psi_0^2(x)\right)dx=&\frac{1}{2}\int_0^T\int_0^L \left(\varphi^2(x,t)+\frac{b}{c}\psi^2(x,t)\right)dxdt \\
+& \frac12\int_0^T(T-t)\left[\varphi_x^2(0,t)+\frac{2ab}{c}\psi_x(0,t)\varphi_x(0,t)+\frac{b}{c^2}\psi_x^2(0,t)\right]dt.
\end{align*}
Consequently,
\begin{align}\label{TLcondition1}
\|(\varphi^0,\psi^0)\|_{\X}^2\leq \frac{C}{T}\|(\varphi,\psi)\|_{L^2(0,T;\X)}^2 +C_1\left\|\varphi_x(0,\cdot)+\frac{ab}{c}\psi_x(0,\cdot)\right\|_{L^2(0,T)}^2,
\end{align}
where $C=\frac{\max\{b,c\}}{\min\{b,c\}}$ and $C_1=C_1(a,b,c)>0$.  On the other hand, note that
\begin{equation*}
\|\varphi(\cdot,t)\|^2_{L^2(0,L)} \leq L \|\varphi(\cdot,t)\|^2_{L^{\infty}(0,L)}, \quad \text{and} \quad \|\psi(\cdot,t)\|^2_{L^2(0,L)} \leq L \|\psi(\cdot,t)\|^2_{L^{\infty}(0,L)}.
\end{equation*}
Hence,
\begin{align}\label{TLcondition2}
\|(\varphi,\psi)\|_{L^2(0,T;\X)}^2 &\leq L\int_0^T \left\lbrace \frac{b}{c} \|\varphi(\cdot,t)\|^2_{L^{\infty}(0,L)}  + \|\psi(\cdot,t)\|^2_{L^{\infty}(0,L)}\right\rbrace dt \\
&\leq \frac{bL\beta}{c} \|\varphi\|_{H^{\frac{1}{3}}(0,T;L^{\infty}(0,L))}^2+   L\beta\|\psi\|_{H^{\frac{1}{3}}(0,T;L^{\infty}(0,L))}^2
\end{align}
where $\beta$ is the constant given by the compact embedding $H^{\frac{1}{3}}(0,T) \subset L ^2(0,T)$. Combining \eqref{TLcondition1}, \eqref{TLcondition2} and Proposition \ref{hiddenregularities}, we obtain
\begin{align*}
\|(\varphi^0,\psi^0)\|_{\X}^2 &\leq  \frac{L \beta C_TC}{T} \|(\varphi^0,\psi^0)\|_{\X}^2   +C_1\left\|\varphi_x(0,\cdot) +\frac{ab}{c}\psi_x(0,\cdot)\right\|_{L^2(0,T)}^2.
\end{align*}
Finally, we obtain
\begin{align*}
\|(\varphi^0,\psi^0)\|_{\X}^2 \leq & K \left\|\varphi_x(0,\cdot)+\frac{ab}{c}\psi_x(0,\cdot)\right\|_{L^2(0,T)}^2
\end{align*}
under the condition
\begin{equation}\label{TLcondition}
 K= C_1\left(1- \frac{ CC_T \beta L}{T}\right)^{-1}>0.
\end{equation}
\end{proof}
From the observability inequality \eqref{obineq3}, the following result holds.
\begin{thm}\label{teo6}
Let $T > 0$ and $L >0$ satisfying \eqref{Lsmall}. Then, the system \eqref{ggln4}-\eqref{gglnb3} is exactly controllable in time T.
\end{thm}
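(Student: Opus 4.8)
The plan is to deduce the exact controllability of \eqref{ggln4}-\eqref{gglnb3} from the observability inequality \eqref{obineq3} by the classical Hilbert Uniqueness Method, in exactly the same way Theorem \ref{teo3} was obtained from Proposition \ref{prop4}, the only difference being that here a single boundary trace is involved. Since \eqref{ggln4}-\eqref{gglnb3} is linear and the free evolution $S(\cdot)$ maps $\X$ continuously into $C([0,T];\X)$, it suffices to take $(u^0,v^0)=(0,0)$ and to show that every target $(u^1,v^1)\in\X$ is reachable at time $T$ by a control $h_2$ lying in the admissible space (i.e. with $h_2\in L^2(0,T)$, so that $\vec h=(0,0,h_2)\in\HH_T$).

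First I would record the duality identity. Multiplying the equations in \eqref{ggln4} by a solution $(\varphi,\psi)$ of the adjoint system \eqref{linadj}-\eqref{finaladj} with final data $(\varphi^1,\psi^1)$, integrating by parts over $(0,L)\times(0,T)$, and using the boundary conditions \eqref{gglnb3} (in which $h_0=h_1=g_0=g_1=g_2=0$) together with \eqref{linadjbound}, all boundary contributions cancel except one, so that
\[
\int_0^L\big(u(x,T)\varphi^1(x)+v(x,T)\psi^1(x)\big)\,dx=\int_0^T h_2(t)\Big(\varphi_x(L,t)+\tfrac{ab}{c}\psi_x(L,t)\Big)\,dt .
\]
Under the change of variables $x'=L-x$, $t'=T-t$ that turns \eqref{linadj}-\eqref{finaladj} into the forward system \eqref{linadj1}-\eqref{linadjbound1}, the surviving trace $\varphi_x(L,\cdot)+\tfrac{ab}{c}\psi_x(L,\cdot)$ becomes precisely $\varphi_x(0,\cdot)+\tfrac{ab}{c}\psi_x(0,\cdot)$, the quantity controlled by \eqref{obineq3}.

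Next I would define the operator $\Gamma:\X\to\X$ by $\Gamma(\varphi^1,\psi^1)=(u(\cdot,T),v(\cdot,T))$, where $(u,v)$ solves \eqref{ggln4}-\eqref{gglnb3} with
\[
h_2(t)=\varphi_x(L,t)+\tfrac{ab}{c}\psi_x(L,t),
\]
$(\varphi,\psi)$ being the adjoint solution with final data $(\varphi^1,\psi^1)$. By the hidden regularity of Proposition \ref{hiddenregularities} (with $k=1$) one has $h_2\in L^2(0,T)$, which is exactly the regularity needed for the third slot of $\HH_T$; since the remaining boundary data vanish, Theorem \ref{teo1} gives $(u,v)\in\ZZ_T$, so $\Gamma$ is a well-defined bounded linear operator. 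The identity above then reads
\[
\big(\Gamma(\varphi^1,\psi^1),(\varphi^1,\psi^1)\big)_{(L^2(0,L))^2}=\Big\|\varphi_x(L,\cdot)+\tfrac{ab}{c}\psi_x(L,\cdot)\Big\|_{L^2(0,T)}^2 ,
\]
and, applying \eqref{obineq3} (equivalently, the observability of \eqref{linadj}-\eqref{finaladj} via the above change of variables), the right-hand side is bounded below by $C^{-1}\|(\varphi^1,\psi^1)\|_{\X}^2$; thus $\Gamma$ is coercive. The Lax--Milgram theorem then shows $\Gamma$ is an isomorphism of $\X$, so any $(u^1,v^1)\in\X$ equals $\Gamma(\varphi^1,\psi^1)$ for some $(\varphi^1,\psi^1)$, and the associated $h_2$ steers $(0,0)$ to $(u^1,v^1)$ in time $T$. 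The substance of the argument is already in place: the delicate step is the observability inequality \eqref{obineq3} under the size restriction \eqref{Lsmall}, which is Proposition \ref{prop6}, and the trace regularity that places the HUM control in $\HH_T$ is Proposition \ref{hiddenregularities}; the only care needed here is bookkeeping in the integration by parts, keeping track of the weight $b/c$ in $\langle\cdot,\cdot\rangle_{\X}$ so that the combination $\varphi_x(L,\cdot)+\tfrac{ab}{c}\psi_x(L,\cdot)$—and no other combination of traces—is the one that appears.
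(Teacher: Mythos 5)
Your proposal is correct and follows essentially the same route as the paper: the paper's proof of Theorem \ref{teo6} also defines the HUM map $\Gamma(\varphi^1,\psi^1)=(u(\cdot,T),v(\cdot,T))$ with the control $h_2(t)=\varphi_x(L,t)+\tfrac{ab}{c}\psi_x(L,t)$, invokes the observability inequality \eqref{obineq3} (Proposition \ref{prop6}) for coercivity, and concludes by Lax--Milgram. Your additional bookkeeping --- the explicit duality identity, the change of variables $x'=L-x$, $t'=T-t$ matching the trace at $x=L$ for the backward system with the trace at $x=0$ in \eqref{obineq3}, and the appeal to Proposition \ref{hiddenregularities} to place $h_2$ in $L^2(0,T)$ --- simply fills in steps the paper leaves implicit.
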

\begin{proof}
Consider the map
\begin{equation*}
\begin{tabular}{r c c c}
$\Gamma :$ & $L^2(0, L) \times L^2(0, L)$        &  $\longrightarrow$ & $L^2(0, L) \times L^2(0, L)$ \\
           & $(\varphi^1(\cdot), \psi^1(\cdot))$ &  $\longmapsto$     & $\Gamma(\varphi^1(\cdot), \psi^1(\cdot))=(u(\cdot,T), v(\cdot,T))$
\end{tabular}
\end{equation*}
where $(u,v)$ is the solution of \eqref{ggln4}-\eqref{gglnb3},  with $h_2(t)=\varphi_x(L,t)+\frac{ab}{c}\psi_x(L,t)$ and $(\varphi,\psi)$ is the solution of the system \eqref{linadj}-\eqref{finaladj} with initial data $(\varphi^1,\psi^1)$. From \eqref{obineq3} and the Lax-Milgram theorem, the proof is achieved.
\end{proof}

\section{Exact Controllability: The Nonlinear Control System}
\subsection{Well-posedness of the nonlinear system}
In this subsection, attention will be given to the full nonlinear initial boundary value problem (IBVP)
\begin{equation}
\label{gg1_new}
\begin{cases}
u_t + uu_x+u_{xxx} + a v_{xxx} + a_1vv_x+a_2 (uv)_x =0, & \text{in} \,\, (0,L)\times (0,T),\\
c v_t +rv_x +vv_x+abu_{xxx} +v_{xxx}+a_2buu_x+a_1b(uv)_x  =0,  & \text{in} \,\, (0,L)\times (0,T), \\
u(x,0)= u^0(x), \quad v(x,0)=  v^0(x), & \text{in} \,\, (0,L),
\end{cases}
\end{equation}
with the boundary conditions
\begin{equation}\label{gg2_new}
\begin{cases}
u(0,t)=h_0(t),\,\,u(L,t)=h_1(t),\,\,u_{x}(L,t)=h_2(t),\\
v(0,t)=g_0(t),\,\,v(L,t)=g_1(t),\,\,v_{x}(L,t)=g_2(t).
\end{cases}
\end{equation}
We show that the IBVP \eqref{gg1_new}-\eqref{gg2_new} is locally well-posed in the space $\ZZ_T$.

\begin{thm}\label{nonlinearteo}
Let $T>0$ be given. For any $(u^0,v^0) \in \X$ and $\overrightarrow{h}:=(h_0,h_1,h_2)$, $\overrightarrow{g}:=(g_0,g_1,g_2) \in \HH_T$, there exists $T^{*}\in (0, T]$ depending on $\|(u^0,v^0)\|_{\X}$, such that the IBVP  \eqref{gg1_new}-\eqref{gg2_new} admits a unique solution $
(u,v) \in \ZZ_{T^*}$
with
\begin{equation*}
\partial_x^k u,\partial_x^k v \in L^{\infty}_x(0,L;H^{\frac{1-k}{3}}(0,T^*)),  \quad k=0,1,2.
\end{equation*}
Moreover, the corresponding solution map is Lipschitz continuous.
\end{thm}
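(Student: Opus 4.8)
The plan is to recast the nonlinear problem \eqref{gg1_new}--\eqref{gg2_new} as a fixed point problem for the solution operator of the linear nonhomogeneous system of Theorem \ref{teo1}, with the quadratic terms playing the role of forcing. For $T^{*}\in(0,T]$ to be chosen and $(\widetilde u,\widetilde v)\in\ZZ_{T^{*}}$, let $\Theta(\widetilde u,\widetilde v)=(u,v)$ be the unique solution in $\ZZ_{T^{*}}$ furnished by Theorem \ref{teo1} of the linear system \eqref{gglin2} on $(0,L)\times(0,T^{*})$ with the prescribed initial data $(u^0,v^0)$, boundary data $(\overrightarrow h,\overrightarrow g)$, and right-hand sides
\[
f=-\Bigl(\widetilde u\,\widetilde u_x+a_1\widetilde v\,\widetilde v_x+a_2(\widetilde u\widetilde v)_x\Bigr),\qquad
s=-\frac1c\Bigl(\widetilde v\,\widetilde v_x+a_2 b\,\widetilde u\,\widetilde u_x+a_1 b\,(\widetilde u\widetilde v)_x\Bigr).
\]
A fixed point of $\Theta$ in $\ZZ_{T^{*}}$ is exactly a solution of \eqref{gg1_new}--\eqref{gg2_new} on $(0,T^{*})$, and the Kato-type trace regularity asserted in the theorem is inherited automatically from \eqref{hr3}. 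For $\Theta$ to be well defined one needs $f,s\in L^1(0,T^{*};L^2(0,L))$ whenever $(\widetilde u,\widetilde v)\in\ZZ_{T^{*}}$, which is the purpose of the next estimate.

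The heart of the matter is a bilinear bound exhibiting a positive power of the time horizon. Using the one-dimensional interpolation inequality $\|w(\cdot,t)\|_{L^\infty(0,L)}\le C\|w(\cdot,t)\|_{L^2(0,L)}^{1/2}\|w(\cdot,t)\|_{H^1(0,L)}^{1/2}$ together with H\"older's inequality in $t$, one gets, for any $w,z\in\ZZ_{T^{*}}$,
\[
\int_0^{T^{*}}\|w(\cdot,t)\,\partial_x z(\cdot,t)\|_{L^2(0,L)}\,dt\le C\,(T^{*})^{1/4}\,\|w\|_{\ZZ_{T^{*}}}\,\|z\|_{\ZZ_{T^{*}}},
\]
with $C$ uniform for $T^{*}\in(0,T]$. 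Expanding $(\widetilde u\widetilde v)_x=\widetilde u_x\widetilde v+\widetilde u\widetilde v_x$ and applying this estimate to each product gives
\[
\|(f,s)\|_{L^1(0,T^{*};L^2(0,L))}\le C\,(T^{*})^{1/4}\,\|(\widetilde u,\widetilde v)\|_{\ZZ_{T^{*}}}^{2},
\]
so $\Theta$ is well defined and, by \eqref{hr3},
\[
\|\Theta(\widetilde u,\widetilde v)\|_{\ZZ_{T^{*}}}\le C_0\Bigl(\|(u^0,v^0)\|_{\X}+\|(\overrightarrow h,\overrightarrow g)\|_{\HH_{T}}\Bigr)+C_1\,(T^{*})^{1/4}\,\|(\widetilde u,\widetilde v)\|_{\ZZ_{T^{*}}}^{2}.
\]

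To close the argument, set $R:=2C_0\bigl(\|(u^0,v^0)\|_{\X}+\|(\overrightarrow h,\overrightarrow g)\|_{\HH_{T}}\bigr)$ and let $B_R$ be the closed ball of radius $R$ in $\ZZ_{T^{*}}$. Choosing $T^{*}\in(0,T]$ so small that $C_1(T^{*})^{1/4}R\le\tfrac12$, the preceding estimate gives $\Theta(B_R)\subset B_R$. For the contraction, the difference $\Theta(\widetilde u_1,\widetilde v_1)-\Theta(\widetilde u_2,\widetilde v_2)$ solves the linear system with zero initial and boundary data and forcing which is bilinear in the two arguments and linear in their difference, so the same bilinear estimate yields
\[
\|\Theta(\widetilde u_1,\widetilde v_1)-\Theta(\widetilde u_2,\widetilde v_2)\|_{\ZZ_{T^{*}}}\le C_1\,(T^{*})^{1/4}\,R\,\|(\widetilde u_1-\widetilde u_2,\widetilde v_1-\widetilde v_2)\|_{\ZZ_{T^{*}}}\le\tfrac12\,\|(\widetilde u_1-\widetilde u_2,\widetilde v_1-\widetilde v_2)\|_{\ZZ_{T^{*}}}.
\]
Hence $\Theta$ is a contraction on $B_R$; Banach's fixed point theorem provides a unique fixed point in $B_R$, and uniqueness in the whole class $\ZZ_{T^{*}}$ follows from a Gronwall argument applied to the difference of two solutions. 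Since $T^{*}$ depends only on $R$, hence only on $\|(u^0,v^0)\|_{\X}$ (the data $(\overrightarrow h,\overrightarrow g)$ being fixed), and since the map $\text{data}\mapsto\text{solution}$ is built from the linear operator of Theorem \ref{teo1} and the contraction, Lipschitz continuity of the solution map follows by subtracting the equations for two sets of data and absorbing the factor $C_1(T^{*})^{1/4}R\le\tfrac12$, exactly as in the KdV case \cite{bonasunzhang2003}. The one genuinely delicate step is the bilinear estimate in $L^1(0,T^{*};L^2)$ with the gain $(T^{*})^{1/4}$, in particular controlling the derivative-loss term $(\widetilde u\widetilde v)_x$; everything else is a routine invocation of Theorem \ref{teo1} and the contraction principle.
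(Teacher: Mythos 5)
Your argument is correct and follows essentially the same route as the paper: both recast the problem as a fixed point for the linear nonhomogeneous solution operator of Proposition \ref{prop1}/Theorem \ref{teo1} with the quadratic terms as forcing, and both close the contraction on a ball of radius proportional to the size of the data by extracting a positive power of $T^{*}$ from an $L^1_tL^2_x$ bilinear estimate. The only cosmetic differences are that you prove the bilinear bound yourself (Gagliardo--Nirenberg plus H\"older, yielding $(T^{*})^{1/4}$) where the paper cites \cite[Lemma 3.1]{bonasunzhang2003} (yielding $(T^{*})^{1/2}+(T^{*})^{1/3}$), and that you keep the transport term $\tfrac{r}{c}v_x$ in the linear operator rather than in the source.
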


\begin{proof}
Let
$$\FF_T= \left\lbrace (u,v) \in \ZZ_T: (u,v) \in  L^{\infty}_x(0,L;(H^{\frac{1-k}{3}}(0,T))^2), k=0,1,2\right\rbrace$$
be a Banach space equipped with the norm
\[
\|(u,v)\|_{\FF_T} = \|(u,v)\|_{\ZZ_T}+\sum_{k=0}^{2}\|(\partial_x^ku,\partial_x^kv)\|_{L^{\infty}_x(0,L;(H^{\frac{1-k}{3}}(0,T))^2)}.
\]
Let $0< T^* \leq T$ to be determined later. For each $u,v \in \FF_{T^*}$, consider the problem
\begin{equation}\label{e1'}
\left\lbrace \begin{tabular}{l l}
$\omega_t + \omega_{xxx} + a\eta_{xxx}  =f(u,v)$, & in $(0,L)\times (0,T^*)$,\\
$\eta_t +\frac{ab}{c}\omega_{xxx} + \frac{1}{c}\eta_{xxx} =s(u,v)$, & in $(0,L)\times (0,T^*)$,\\
$\omega(0,t) = h_0(t),\,\,\omega(L,t) = h_1(t),\,\,\omega_{x}(L,t) = h_2(t)$,& in $(0,T^*)$,\\
$\eta(0,t) =g_0(t),\,\,\eta(L,t) = g_1(t),\,\, \eta_{x}(L,t) = g_2(t)$,& in $(0,T^*)$,\\
$\omega(x,0)=u^0(x), \quad v(x,0) = v^0(x)$, & in $(0,L)$,
\end{tabular}\right.
\end{equation}
where $$f(u,v)=-a_1(vv_x)-a_2(uv)_x$$ and $$s(u,v)=-\frac{r}{c}v_x -\frac{a_2b}{c}(uu_x)-\frac{a_1b}{c}(uv)_x.$$
Since $\|v_x\|_{L^1(0,\beta;L^2(0,L))}\leq \beta^{\frac12}\|v\|_{\ZZ_{\beta}}$, from \cite[Lemma 3.1]{bonasunzhang2003} we deduce that $f(u,v)$ and $s(u,v)$ belong to $L^1(0,T^*;L^2(0,L))$
and
\begin{equation*}
\|(f,s)\|_{L^1(0,T^*;(L^2(0,L))^2)}\leq C_1 ((T^*)^{\frac12}+(T^*)^{\frac13})\left( \|u\|_{\ZZ_{T^*}}^2+(\|u\|_{\ZZ_{T^*}}+1)\|v\|_{\ZZ_{T^*}}+\|v\|_{\ZZ_{T^*}}^2\right),
\end{equation*}
for some positive constant $C_1$. According to Proposition \ref{prop1}, we can define the operator
\[
\Gamma: \FF_{T^*} \rightarrow \FF_{T^*} \quad \text{given by} \quad \Gamma(u,v)=(\omega,\eta),
\]
where $(\omega,\eta)$ is the solution of (\ref{e1'}). Moreover,
\begin{equation*}
\|\Gamma(u,v)\|_{\FF_{T^*}}\leq C\left\lbrace \|(u^0,v^0)\|_{\X}+\|(\overrightarrow{h},\overrightarrow{g})\|_{\HH_T^*}+\|(f,s)\|_{L^1(0,T^*;(L^2(0,L))^2)} \right\rbrace,
\end{equation*}
where the positive  constant $C$ depends only on $T^*$. Thus, we obtain
\begin{align*}
\|\Gamma(u,v)\|_{\FF_{T^*}}\leq &C\left\lbrace \|(u^0,v^0)\|_{\X}+\|(\overrightarrow{h},\overrightarrow{g})\|_{\HH_T^*}\right\rbrace \\
+&CC_1((T^*)^{\frac12}+(T^*)^{\frac13})\left( \|u\|_{\ZZ_{T^*}}^2+(\|u\|_{\ZZ_{T^*}}+1)\|v\|_{\ZZ_{T^*}}+\|v\|_{\ZZ_{\beta}}^2\right).
\end{align*}
Let $(u,v) \in B_{r}(0)$, where
$$B_{r}(0):=\left\lbrace (u,v) \in \FF_{T^*}: \|(u,v)\|_{\FF_{T^*}}\leq r\right\rbrace,$$
with $r=2C\left\lbrace \|(u^0,v^0)\|_{\X}+\|(\overrightarrow{h},\overrightarrow{g})\|_{\HH_T}\right\rbrace$. It follows that
\begin{equation}\label{e2'}
\|\Gamma(u,v)\|_{\FF_{T^*}}\leq \frac{r}{2}+CC_1((T^*)^{\frac12}+(T^*)^{\frac13})\left( 3r+1\right)r.
\end{equation}
Choosing $T^*>0$, such that
$$CC_1((T^*)^{\frac12}+(T^*)^{\frac13})\left( 3r+1\right)\leq \frac{1}{2},$$
from (\ref{e2'}), we have
$$\|\Gamma(u,v)\|_{\FF_{T^*}}\leq r.$$
Therefore,
\[
\Gamma: B_r(0)\subset\FF_{T^*} \rightarrow B_r(0).
\]
On the other hand,  $\Gamma(u_1,v_1)-\Gamma(u_2,v_2)$ is the solution of system
\begin{equation*}
\left\lbrace \begin{tabular}{l l}
$\omega_t + \omega_{xxx} + a\eta_{xxx}  =f(u_1,v_1)-f(u_2,v_2)$, & in $(0,L)\times (0,T^*)$,\\
$\eta_t +\frac{ab}{c}\omega_{xxx} + \frac{1}{c}\eta_{xxx} =s(u_1,v_1)-s(u_2,v_2) $, & in $(0,L)\times (0,T^*)$,\\
$\omega(0,t) = \omega(L,t) = \omega_{x}(L,t) = 0$,& in $(0,T^*)$,\\
$\eta(0,t)=\eta(L,t) = \eta_{x}(L,t) = 0$,& in $(0,T^*)$,\\
$\omega(x,0)=0, \quad v(x,0) = 0$, & in $(0,L)$.
\end{tabular}\right.
\end{equation*}
Note that
\begin{align*}
|f(u_1,v_1)-f(u_2,v_2)| \leq C_2|\left( (v_2-v_1)v_{2,x}+ v_1(v_2-v_1)_x+(u_2(v_2-v_1))_x +((u_2-u_1)v_1)_x\right)|
\end{align*}
and
\begin{align*}
|s(u_1,v_1)-s(u_2,v_2)| \leq& C_2|\left( (v_2-v_1)_x+ (u_2-u_1)u_{2,x}+ u_1(u_2-u_1)_x \right.\\
+&\left.(u_2(v_2-v_1))_x +((u_2-u_1)v_1)_x\right)|,
\end{align*}
for some positive constant $C_2$. Proposition \ref{prop1} and \cite[Lemma 3.1]{bonasunzhang2003} give us the following estimate
\begin{align*}
\|\Gamma(u_1,v_1)-\Gamma(u_2,v_2)\|_{\FF_{T^*}}
&\leq C_3((T^*)^{\frac{1}{2}}+(T^*)^{\frac{1}{3}})(8r+1)\|(u_1-u_2,v_1-v_2)\|_{\FF_{T^*}},
\end{align*}
for some positive constant $C_3$. Choosing $T^*$, such that
$$C_3((T^*)^{\frac{1}{2}}+(T^*)^{\frac{1}{3}})(8r+1) \leq \frac{1}{2},$$
we obtain
\[\|\Gamma(u_1,v_1)-\Gamma(u_2,v_2)\|_{\FF_{T^*}}\leq \frac{1}{2}\|(u_1-u_2,v_1-v_2)\|_{\FF_{T^*}}.
\]
Hence $\Gamma: B_r(0) \rightarrow B_r(0)$ is a contraction and, by Banach fixed point theorem, we obtain a unique $(u,v) \in B_r(0)$, such that $\Gamma(u,v) = (u,v) \in \FF_{T^*}$ and, therefore, the proof is complete.
\end{proof}

We are now in position to prove our main result. First,  define the bounded linear operators
\begin{equation}\label{operatorcontro}
\Lambda_i : \X \times \X \longrightarrow \HH_T \times \HH_T \qquad (i=1,2),
\end{equation}
such that, for any $(u^0, v^0) \in \X$ and $(u^1, v^1) \in \X$,
$$\Lambda_i\left(  \left( \begin{array}{cc} u^0\\v^0 \end{array}\right), \left( \begin{array}{cc} u^1\\v ^1 \end{array}\right)\right  ):= \left( \begin{array}{cc} \vec{h}_i\\ \vec{g}_i \end{array}\right), $$
where
\begin{enumerate}
\item[(i)] $\vec{h}_1=(0,0,h_2)$ and $\vec{g}_1=(g_0,g_1,g_2)
$,
\item[(ii)] $\vec{h}_2=(0,0,h_2)$ and $\vec{g}_2=(0,0,0)$.
\end{enumerate}
\vglue 0.2 cm
\begin{proof}[\textbf{Proof of Theorem \ref{main}}]
 According to Proposition \ref{prop1} and \cite[Theorem 2.10]{bonasunzhang2003} the solution of  \eqref{gg1_new}-\eqref{gg2_new}  can be written as:
\begin{align*}
\left( \begin{array}{cc} u(t)\\v(t) \end{array}\right) =&  W_0(t) \left( \begin{array}{cc} u_0\\v_0 \end{array}\right) + W_{bdr}(t)\left(\begin{array}{cc}\vec{h}_i \\  \vec{g}_i   \end{array}\right)\\
-& \int_0^t W_0 (t-\tau)\left( \begin{array}{cc}  a_1 (vv_x)(\tau)+a_2 (uv)_x(\tau) \\  \frac{r}{c}v_x(\tau)+ \frac{a_2b}{c}(uu_x)(\tau)+\frac{a_1b}{c}(uv)_x(\tau) \end{array}\right) d\tau,
\end{align*}
with $i=1,2$, where $\{W_0(t)\}_{t\ge 0}$ and  $\{W_{bdr}(t)\}_{t\ge 0}$ are  the operators defined in the proof of Proposition \ref{prop1}.
\vglue 0.2 cm
For $u,v \in \ZZ_T$,  let us define
$$\left(\begin{array}{cc} \upsilon \\ \nu (T,u,v)\end{array}\right)  := \int_0^T W_0(T-\tau)   \left( \begin{array}{cc}  a_1  (vv_x)(\tau)  + a_2 (uv)_x (\tau) \\
 \frac{a_2b}{c}(uu_x)(\tau) + \frac{a_2b}{c} (uv)_x(\tau) \end{array} \right)  d\tau.  $$
Here, we consider the case $i=1$. The other case $i=2$ is analogous and, therefore, we will omit it. Consider the map
\begin{align*}
\Gamma\left( \begin{array}{cc} u\\v \end{array}\right) = & W_0(t) \left( \begin{array}{cc} u^0\\v^0 \end{array}\right) + W_{bdr}(x)  \Lambda_1\left(    \left( \begin{array}{cc}   u^0\\v^0  \end{array}\right),  \left( \begin{array}{cc}  u^1 \\ v^1 \end{array}\right) +  \left( \begin{array}{cc} v \\  \nu(T,u,v) \end{array}\right) \right) \\
-& \int_0^t W_0 (t-\tau)
\left( \begin{array}{cc}  a_1 (vv_x)(\tau)+a_2 (uv)_x(\tau) \\  \frac{r}{c}v_x(\tau)+ \frac{a_2b}{c}(uu_x)(\tau)+\frac{a_1b}{c}(uv)_x(\tau) \end{array}\right) d\tau.
\end{align*}
By choosing
\begin{equation}\label{controli}
\left( \begin{array}{cc} \vec{h}_1 \\ \vec{g}_1 \end{array}\right)  = \Lambda_1\left(    \left( \begin{array}{cc}   u^0\\v^0  \end{array}\right),  \left( \begin{array}{cc}  u^1 \\ v^1 \end{array}\right) +  \left( \begin{array}{cc} v \\  \nu(T,u,v) \end{array}\right) \right),
\end{equation}
we get, from Theorem \ref{teo3},
$$\Gamma\left( \begin{array}{cc} u\\v \end{array}\right)\Big|_{t=0}= \left( \begin{array}{cc} u^0\\v^0\end{array}\right)$$
and
$$\Gamma\left( \begin{array}{cc} u\\v \end{array}\right)\Big|_{t=T}= \left( \begin{array}{cc} u^1\\v^1\end{array}\right)+\left(\begin{array}{cc} v \\ \nu (T,u,v)\end{array}\right)-\left(\begin{array}{cc} v \\ \nu (T,u,v)\end{array}\right)= \left( \begin{array}{cc} u^1\\v^1\end{array}\right).$$
If we show that the map $\Gamma$ is a contraction in an appropriate metric space, then its fixed point $(u,v)$ is the solution of \eqref{gg1_new}-\eqref{gg2_new} with $\vec{h}_1$ and $\vec{g}_1$ defined by \eqref{controli}, satisfying $u(\cdot,T)=u^1(\cdot)$ and $v(\cdot,T)=v^1(\cdot)$. In order to prove the existence of the fixed point we apply the Banach fixed point theorem to the restriction of $\Gamma$ on closed ball
$$B_r=\left\{ ( u, v)  \in  \ZZ_T : \left\| ( u, v )\right\|_{\ZZ_T} \le r \right\},$$
for some $r>0$.

\begin{itemize}
\item[(a)]\textit{$\Gamma$ maps $B_r$ in itself.}
\end{itemize}
Indeed, as in the proof of Theorem \ref{nonlinearteo}, we obtain that there exists a constant $C_1>0$ such that

\begin{equation*}
\left\|\Gamma\left( \begin{array}{cc} u\\v \end{array}\right) \right\|_{\ZZ_T}  \le   C_1\delta +C_2(r+1)r,
\end{equation*}
where $C_2$ is a constant depending only $T$. Thus, if we select $r$ and $\delta$ satisfying $$ r=2C_1\delta$$ and $$ 2C_1C_2\delta +C_2\leq \frac12,$$ the operator $\Gamma$ maps $B_r$ into itself for any $(u,v) \in \ZZ_T$.
\begin{itemize}
\item[(b)]\textit{$\Gamma$ is contractive.}
\end{itemize}
In fact, proceeding as the proof of Theorem \ref{nonlinearteo}, we obtain
\begin{equation*}
\left\|\Gamma\left( \begin{array}{cc} u\\v \end{array}\right) - \Gamma\left( \begin{array}{cc} \widetilde{u}\\\widetilde{v} \end{array}\right) \right\|_{\ZZ_T}  \le   C_3(r+1)r \left\|\left( \begin{array}{cc} u-\widetilde{u}\\v-\widetilde{v} \end{array}\right) \right\|_{\ZZ_T},
\end{equation*}
for any $(u,v), (\widetilde{u},\widetilde{v}) \in B_r$ and $C_3$ constant depending only $T$. Thus, choosing $\delta >0$, such that $$\gamma= 2C_2C_3\delta +C_3 < 1,$$ we obtain
\begin{equation*}
\left\|\Gamma\left( \begin{array}{cc} u\\v \end{array}\right) - \Gamma\left( \begin{array}{cc} \widetilde{u}\\\widetilde{v} \end{array}\right) \right\|_{\ZZ_T}  \le   \gamma \left\|\left( \begin{array}{cc} u-\widetilde{u}\\v-\widetilde{v} \end{array}\right) \right\|_{\ZZ_T}.
\end{equation*}
Therefore, the map $\Gamma$ is a contraction.

Thus, from (a) and (b), $\Gamma$ has a fixed point in $B_r$ by the Banach fixed point Theorem and its fixed point is the desired solution. The proof of Theorem \ref{main} is archived.
\end{proof}

\section{Further Comments}

The following remarks are now in order:

\vglue 0.2 cm

\noindent$\bullet$ In \cite{micuortegapazoto2009}, it was proved that the system \eqref{gg1} with the boundary conditions
\begin{equation}\label{mop}
\left\lbrace\begin{tabular}{l l l l}
$u(0,t) = 0$,     & $u(L,t) = h_1(t)$, & $u_{x}(L,t) = h_2(t)$ & in $(0,T)$,\\
$v(0,t) = 0$,     & $v(L,t) = g_1(t)$, & $v_{x}(L,t) = g_2(t)$ & in $(0,T)$,
\end{tabular}\right.
\end{equation}
is exactly controllable in $L^2(0,L)$ when $h_1, g_1\in H^1_0(0, T)$ and $h_2, g_2 \in L^2(0,T)$ (see Theorem A). By using the tools developed in this paper, more precisely, Lemma \ref{sharp_int}, an improvement of the regularity of the control can be obtained. In this case, the control $(h_1, g_1, h_2,g_2)$ can be found in the space  $H^{\frac{1}{3}}(0,T)\times  H^{\frac{1}{3}}(0,T)\times L^2(0,T)\times L^2(0,T)$.

\vglue 0.2 cm

\noindent$\bullet$ Another case that can be treated is the following one
\begin{equation}\label{ggln4b'}
\left\lbrace\begin{tabular}{l l l l}
$u(0,t) = h_0(t)$,     & $u(L,t) = h_1(t)$,      & $u_{x}(L,t) = h_2(t)$ & in $(0,T)$,\\
$v(0,t) =0$,           & $v(L,t) = 0$,           & $v_{x}(L,t) = g_2(t)$ & in $(0,T)$.
\end{tabular}\right.
\end{equation}
By using the same ideas of the proof of Theorem \ref{main}, we can prove that system \eqref{ggln4}-\eqref{ggln4b'} is exactly controllable for any time $T>0$ if $L \in (0,\infty) \setminus \FF'_r$.
%Then, the system \eqref{ggln4}-\eqref{ggln4b'} is exactly controllable in time $T$.

\vglue 0.2 cm

\noindent$\bullet$ Concerning the exact boundary controllability of the system \eqref{gg1} with one control, our approach can be applied to the following configuration:
\begin{equation}\label{gglnb3'}
\left\lbrace\begin{tabular}{l l l l}
$u(0,t) = 0$ & $u(L,t) = 0$ & $u_{x}(L,t) = 0$,& in $(0,T)$,\\
$v(0,t) =0,$ & $v(L,t) = 0,$ & $v_{x}(L,t) =g_2(t)$,& in $(0,T)$.
\end{tabular}\right.
\end{equation}
The proof of this case is analogous to (ii) of Theorem \ref{main}.

\subsection*{Acknowledgments}

Fernando A. Gallego was supported by CAPES (Brazil) and Ademir F. Pazoto was partially supported by CNPq (Brazil).

\end{document}